\pgfplotsset{compat=newest,compat/show suggested version=false}
\definecolor{emerald}{rgb}{0.31, 0.78, 0.47}
\definecolor{royalblue(web)}{rgb}{0.25, 0.41, 0.88}
\definecolor{folly}{rgb}{1.0, 0.0, 0.31}
\definecolor{myblue}{rgb}{0.10, 0.20, 1.90}
\definecolor{myred}{rgb}{1.0, 0.0, 0.25}
\definecolor{mygreen}{rgb}{0.20,0.65, 0.20}
\numberwithin{equation}{section}
\newtheorem{theorem}{Theorem}[section]
\newtheorem{lemma}[theorem]{Lemma}
\newtheorem{proposition}[theorem]{Proposition}
\newtheorem{corollary}[theorem]{Corollary}
\theoremstyle{definition}
\newtheorem{definition}[theorem]{Definition}
\newtheorem{conjecture}[theorem]{Conjecture}
\newtheorem{remark}[theorem]{Remark}
\newtheorem{example}[theorem]{Example}
\newtheorem{notation}[theorem]{Notation}
\newcommand{\thistheoremname}{}
\newtheorem{genericthm}[theorem]{\thistheoremname}
\newenvironment{namedthm}[1]
{\renewcommand{\thistheoremname}{#1}%
	\begin{genericthm}}
	{\end{genericthm}}
\newcommand{\g}{\operatorname{g}}
\renewcommand{\mod}{\operatorname{mod}}
\newcommand{\proj}{\operatorname{proj}}
\newcommand{\TF}{\operatorname{TF}}
\newcommand{\ind}{\operatorname{ind}}
\newcommand{\Hom}{\operatorname{Hom}}
\newcommand{\taurigidpair}{\operatorname{\tau-rigid-pair}}
\renewcommand{\dim}{\operatorname{dim}}
\newcommand{\Ker}{\operatorname{Ker}}
\newcommand{\Fac}{\operatorname{Fac}}
\newcommand{\ftorspair}{\operatorname{f-tors-pair}}
\newcommand{\Sub}{\operatorname{Sub}}
\newcommand{\add}{\operatorname{add}}
\newcommand{\Coker}{\operatorname{Coker}}
\renewcommand{\add}{\operatorname{add}}
\newcommand{\tame}{\operatorname{tame}}
\newcommand{\Cone}{\operatorname{Cone}}
\newcommand{\repr}[1]{%
	{%
		\tiny%
		\begin{pmatrix}%
			#1%
		\end{pmatrix}%
	}%
}
\newcommand\scalemath[2]{\scalebox{#1}{\mbox{\ensuremath{\displaystyle #2}}}}
\begin{document}
	\title[]{The cones of $\textbf{g}$-vectors}
	\author{Mohamad Haerizadeh}
	\email{hyrizadeh@gmail.com}
	\author{Siamak Yassemi}
	\email{syassemi@purdue.edu}
	
	\keywords{representation theory; finite-dimensional algebra; wall-chamber structure; g-vector; generic decomposition; $\TF$-equivalence class; ray condition; Grothendieck group; semistable torsion class}
	\date{\today}
	\begin{abstract}
		This paper studies the wall-chamber structures of finite-dimensional ($\tau$-tilting infinite) algebras via generic decompositions of g-vectors. In particular, we examine regions outside the chambers. We show that the cones of g-vectors are rational and simplicial. Moreover, we prove that the open cone of a given g-vector coincides with the interior of its $\TF$-equivalence class if and only if the two have the same dimension. Furthermore, we establish that g-vectors satisfy the ray condition when they are sufficiently far from the origin. As an application, we generalize several results of Asai and Iyama concerning $\TF$-equivalence classes of g-vectors.
	\end{abstract}
	\maketitle
	\tableofcontents
	\section{Introduction}
	Derksen and Fei \cite{DeFe15} introduced generic decompositions of (projective) presentations to generalize Kac and Schofield's results \cite{Ka82, Sch92} on the decompositions of quiver representations (see also \cite{CBSc02, DW17}).
	They constructed a notion of decomposition for g-vectors, named ``generic decomposition'' obtained from decompositions of general presentations. More precisely, if $g=g_1\oplus g_2\oplus\ldots\oplus g_s$ is the generic decomposition of $g$, then a general presentation in $\Hom_{\Lambda}(g)$ can be expressed as a direct sum of general presentations in $\Hom_{\Lambda}(g_i)$, for $1\le i\le s$.
	This approach has also been applied to study the categorification of cluster algebras \cite{DK08, Pla13, Fe25}, g-vector fans \cite{PYK23, Yu23, AY23, HY25}, and wall-chamber structures \cite{As22, As21, AsIy24}.
	Moreover, in our paper \cite{HY25b}, we established that there is a natural correspondence between decompositions of generically $\tau$-regular components of representation varieties, in the sense of \cite{CBSc02}, and generic decompositions of g-vectors. This led us to provide a partial response to \cite[Conjecture 6.3]{CILFS14}. We also refer readers to \cite{GLFS21, GLFS24, GLFW23, Ge23, HY25c}  for recent significant works on generically $\tau$-regular components.
	
	On the other hand, inspired by the works of King \cite{Ki94} and Bridgeland \cite{Br17}, Brüstle, Smith, and Treffinger \cite{BST19} studied the wall-chamber structures of algebras. They showed that the open cone associated with any $2$-term silting complex must be a chamber. Subsequently, using $\TF$-equivalence classes, Asai \cite{As21} established the converse. More specifically, he proved that for a finite-dimensional algebra, the following sets are equivalent:
	\begin{itemize}
		\item The set of all chambers of the wall-chamber structure of $\Lambda$
		\item The set of all $\TF$-equivalence classes of dimension $|\Lambda|$
		\item The set of all open cones associated with $2$-term silting complexes
	\end{itemize}
	Asai further demonstrated that the g-vector fan of $\Lambda$ covers the entire ambient space if and only if $\Lambda$ is $\tau$-tilting finite\footnote{The ``if'' part proved in the paper \cite{DIJ19}.}, meaning it admits only finitely many $\tau$-tilting modules.
	
	This naturally raises the problem of understanding the regions outside the chambers of the wall-chamber structure of ($\tau$-tilting infinite) finite-dimensional algebras. In this context, Asai and Iyama \cite{AsIy24} investigated the relationship between $\TF$-equivalence classes of g-vectors and their generic decompositions. They showed that for a finite-dimensional algebra $\Lambda$, and g-vectors $g$ and $h$, the condition $\ind(g)=\ind(h)$ implies that $[g]_{\TF}=[h]_{\TF}$. Furthermore, if $\Lambda$ is $E$-tame or hereditary, then
	$[g]_{\TF}=\Cone^{\circ}\{\ind(g)\}$.
	But in general, if $g$ does not satisfy the ray condition (for example, see \cite[Example 5.9]{AsIy24}), then $\Cone^{\circ}\{\ind(g)\}\neq\Cone^{\circ}\{\ind(tg)\}$, for some $t\in\mathbb{N}$.
	Nevertheless, Asai and Iyama \cite[Conjecture 1.2]{AsIy24} expect that $[g]_{\TF}=\Cone^{\circ}\{\ind(\mathbb{N}g)\}$.
	
	For this reason, we focus on $\Cone\{\ind(\mathbb{N}g)\}$ which is called the \textit{cone of} $g$.
	Due to technical reasons (for instance, see Theorem \ref{962650431975}), we have modified the conjecture mentioned in the paragraph above, and proposed the following slightly modified version.
	\begin{conjecture}\label{362256186221}
		Let $g$ be a g-vector. Then, 
		$[g]_{\TF}^{\circ}= \Cone^{\circ}\{\ind(\mathbb{N}g)\}$.
	\end{conjecture}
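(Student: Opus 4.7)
The plan exploits the three structural results announced in the abstract: simpliciality and rationality of the cones of g-vectors, the eventual ray condition far from the origin, and the dimension criterion equating $\Cone^{\circ}\{\ind(g)\}$ with $[g]_{\TF}^{\circ}$. Together these should reduce the conjecture from an infinite union to a single stabilised cone, after which one matches dimensions.

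First I would argue that the chain $\{\Cone\{\ind(tg)\}\}_{t\in\mathbb{N}}$ eventually stabilises. By the ``sufficiently far from the origin'' ray condition, there is an $N$ such that $Ng$ (and every positive multiple) satisfies the ray condition, giving $\Cone^{\circ}\{\ind(Ng)\} = \Cone^{\circ}\{\ind(tNg)\}$ for every $t\ge 1$. A short argument using common multiples then yields $\Cone\{\ind(kg)\} = \Cone\{\ind(Ng)\}$ for all $k \ge N$, and hence
$$\Cone\{\ind(\mathbb{N}g)\} = \Cone\{\ind(Ng)\}.$$
I would next verify that $[tg]_{\TF} = [g]_{\TF}$ for every $t \in \mathbb{N}$, which should follow because the walls of the wall and chamber structure of $\Lambda$ are linear subspaces through the origin, so positive scaling preserves the stratum in which a point lies. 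Combining these reductions, the conjecture becomes
$$[Ng]_{\TF}^{\circ} = \Cone^{\circ}\{\ind(Ng)\}.$$

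The dimension criterion from the abstract now reduces this to the single equality $\dim\Cone\{\ind(Ng)\} = \dim [Ng]_{\TF}$. The inclusion $\Cone^{\circ}\{\ind(Ng)\} \subseteq [Ng]_{\TF}^{\circ}$ follows from Asai--Iyama's observation that $\ind(h)=\ind(Ng)$ forces $h \in [Ng]_{\TF}$, applied generically along positive combinations of elements of $\ind(Ng)$ inside the open simplicial cone. Thus $\dim\Cone\{\ind(Ng)\} \le \dim [Ng]_{\TF}$ is immediate; for the reverse, I would argue that any direction along which $[Ng]_{\TF}$ extends beyond the cone at $Ng$ must eventually be visible in $\Cone\{\ind(tg)\}$ for some $t$, and is therefore already contained in $\Cone\{\ind(Ng)\}$ by the stabilisation.

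The principal obstacle is this reverse dimension inequality. It amounts to excluding ``hidden'' indecomposable directions in $[Ng]_{\TF}$ that never appear in the generic decomposition of any $tg$. I expect to rule these out by combining the Derksen--Fei generic-decomposition machinery with the paper's simpliciality of the intermediate cones: a new indecomposable entering only in the limit would contradict the finite combinatorial type of a rational simplicial cone stabilised at stage $N$. A secondary technical point is verifying that the inclusion holds as one of \emph{open} sets rather than of their closures; this should follow from the simpliciality of $\Cone\{\ind(Ng)\}$, which provides good control over facet and relative-interior behaviour.
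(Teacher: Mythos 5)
The statement you are proving is stated in the paper as a \emph{conjecture}, not a theorem: the paper only establishes it in special cases (tame $g$-vectors, the case $\dim_{\mathbb{R}}\langle\ind(\mathbb{N}g)\rangle_{\mathbb{R}}\ge|\Lambda|-1$, and the case where the dimensions in Theorem \ref{962650431975} happen to agree) and otherwise proves an \emph{equivalence} with Conjecture \ref{404319523362}. Your first reductions are fine and match results the paper actually proves: the stabilisation $\Cone\{\ind(\mathbb{N}g)\}=\Cone\{\ind(Ng)\}$ is Theorem \ref{076273627938}/Corollary \ref{355706191811}, the invariance $[tg]_{\TF}=[g]_{\TF}$ is immediate from the definition of the semistable torsion classes, and the reduction to the single dimension equality $\dim_{\mathbb{R}}\langle[g]_{\TF}\rangle_{\mathbb{R}}=\dim_{\mathbb{R}}\langle\ind(\mathbb{N}g)\rangle_{\mathbb{R}}$ is exactly Theorem \ref{962650431975}. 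But that dimension equality is precisely where the open content of the conjecture sits, and your argument for it is not an argument: the assertion that ``any direction along which $[Ng]_{\TF}$ extends beyond the cone must eventually be visible in $\Cone\{\ind(tg)\}$ for some $t$'' is an unproved restatement of the conjecture itself.

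Concretely, nothing in the Derksen--Fei machinery or in the simpliciality/rationality of $\Cone\{\ind(Ng)\}$ rules out the scenario you call ``hidden directions.'' The class $[g]_{\TF}$ is cut out by torsion-pair conditions, and a priori its linear span need not be generated by any g-vectors lying in it at all: whether rational points are even dense in $[g]_{\TF}$ is itself part (1) of Conjecture \ref{404319523362}, which the paper leaves open and shows to be \emph{equivalent} (together with parts (2) and (3)) to the statement you are trying to prove. Your proposed contradiction --- ``a new indecomposable entering only in the limit would contradict the finite combinatorial type of the stabilised cone'' --- addresses the wrong issue: stabilisation of $\Cone\{\ind(tg)\}$ already handles indecomposables appearing in decompositions of multiples of $g$, whereas the danger is directions of $[g]_{\TF}$ (possibly irrational, or spanned by g-vectors whose generic decompositions have nothing to do with $g$) that never meet any $\Cone\{\ind(tg)\}$. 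For wild $g$ outside the tame/codimension-$\le 1$ regimes covered by Theorems \ref{376103330122} and \ref{519973069445}, no result in the paper or in Asai--Iyama bounds $\dim_{\mathbb{R}}\langle[g]_{\TF}\rangle_{\mathbb{R}}$ from above by $\dim_{\mathbb{R}}\langle\ind(\mathbb{N}g)\rangle_{\mathbb{R}}$, so the proposal does not close the gap; at best it rederives the reduction the paper already records and then assumes the remaining inequality.
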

	The main purpose of this paper is to study the cones of g-vectors and provide answers to the above conjecture. As the first result,
	we confirm Conjecture \ref{362256186221} for tame g-vectors in any finite-dimensional algebra. 
	\begin{theorem}[\ref{376103330122}]\label{376103330121}
		Let $g$ be a tame g-vector. Then
		\[[g]_{\TF}=[g]^{\circ}_{\TF}=\Cone^{\circ}\{\ind(g)\}.\]
	\end{theorem}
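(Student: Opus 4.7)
The plan is to derive the theorem from the two structural results already established earlier in the paper: (a) that the cone $\Cone\{\ind(g)\}$ is rational and simplicial for any g-vector $g$, and (b) the dimension criterion, namely that $[g]_{\TF}^{\circ}=\Cone^{\circ}\{\ind(g)\}$ if and only if the two sets have the same dimension. Together, these reduce the proof of the second equality in the statement to verifying a single numerical condition: $\dim [g]_{\TF}=\dim\Cone\{\ind(g)\}$. The first equality $[g]_{\TF}=[g]_{\TF}^{\circ}$ is a separate, purely topological task and will be handled by exhibiting $[g]_{\TF}$ as open in its ambient Euclidean space.

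First I would expand the generic decomposition $g=g_1\oplus\cdots\oplus g_s$ and observe that, because $g$ is tame, the pairwise $E$-invariants between distinct summands $g_i,g_j$ vanish, and each $g_i$ is itself a tame indecomposable. This pairwise vanishing is exactly the input needed to prove that the vectors $g_1,\ldots,g_s$ are linearly independent in the Grothendieck group, so the simplicial cone $\Cone\{\ind(g)\}$ already has dimension $s$. Using the Asai–Iyama result that $\ind(g)=\ind(h)$ implies $[g]_{\TF}=[h]_{\TF}$, one inclusion $\Cone^{\circ}\{\ind(g)\}\subseteq [g]_{\TF}$ is automatic, so in particular $\dim [g]_{\TF}\ge s$. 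The reverse inequality comes from a local argument: any point in $[g]_{\TF}$ that is not a positive combination of the $g_i$ would force the generic presentation to acquire an extra indecomposable summand, which would contradict tameness (the $E$-invariants would have to jump). Hence $\dim [g]_{\TF}=s$ and the dimension criterion applies.

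For the first equality, I would argue that in the tame case $[g]_{\TF}$ is an open subset of $\mathbb{R}^{|\Lambda|}$: if $h$ is sufficiently close to $g$, the generic decomposition of $h$ has the same indecomposable components as $g$ (again using the stability of $E$-invariants under small perturbations in the tame locus), so $h\in [g]_{\TF}$ by Asai–Iyama. Since the ray condition is automatic for tame g-vectors (the summands stabilize at $t=1$, not merely for large $t$), no passage to $\ind(\mathbb{N}g)$ is required. Combining these facts gives $[g]_{\TF}=[g]_{\TF}^{\circ}=\Cone^{\circ}\{\ind(g)\}$.

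The main obstacle will be the dimension count in the second paragraph, specifically the argument that $\dim [g]_{\TF}\le s$: one must rule out the possibility that the TF-class extends beyond the simplicial cone spanned by the indecomposable summands of $g$. I expect this to be the most delicate point, since it requires translating the algebraic content of tameness (vanishing $E$-invariants, stability of generic decompositions) into a sharp geometric upper bound on the size of the equivalence class, and this is the step where the paper's refinement over the Asai–Iyama framework really matters.
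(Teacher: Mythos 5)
Your proposal has two genuine problems. First, the claim that in the tame case $[g]_{\TF}$ is an open subset of $\mathbb{R}^{|\Lambda|}$ is false, and the perturbation argument behind it does not work: a tame g-vector can lie on a wall (indeed the whole point of the paper is to study points outside the chambers), and arbitrarily small perturbations of $g$ can land in adjacent chambers whose semistable torsion pairs differ from those of $g$, so nearby points are in general not $\TF$-equivalent to $g$. The equality $[g]_{\TF}=[g]_{\TF}^{\circ}$ in the statement refers to the \emph{relative} interior inside $\langle[g]_{\TF}\rangle_{\mathbb{R}}$, and it is obtained in the paper as a byproduct of the identity $[g]_{\TF}=\Cone^{\circ}\{\ind(g)\}$, not by openness in the ambient space. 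Second, the crucial upper bound, i.e.\ the inclusion $[g]_{\TF}\subseteq\Cone\{\ind(g)\}$ (your ``$\dim[g]_{\TF}\le s$''), is only gestured at: saying that a point outside the cone ``would force the generic presentation to acquire an extra indecomposable summand, contradicting tameness'' is not an argument, since an arbitrary element of $[g]_{\TF}$ has no a priori relation to the summand structure of $g$. The paper's mechanism here is quite different: because $g$ is tame, Lemma~\ref{076626606506} forces every lattice point $h\in[g]_{\TF}$ to become tame after scaling, the converse (tame) direction of Theorem~\ref{310913388628} then gives $\ind(sh)=\ind(g)$, hence $h\in\Cone^{\circ}\{\ind(g)\}$; one concludes for real points by density of rational points and by excluding lattice points of $[g]_{\TF}$ from the boundary facets of the cone. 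None of these ingredients appears in your sketch, and the ``stability of $E$-invariants under small perturbation'' you invoke has no counterpart that would replace them.

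There is also a structural issue: the two results you take as inputs are proved in the paper \emph{after}, and by means of, this theorem. The dimension criterion (Theorem~\ref{962650431975}) rests on Lemma~\ref{988807280630}, whose proof invokes the tame case explicitly, and the rationality/simpliciality of the cone of a general g-vector (Theorem~\ref{076273627938}) likewise uses the tame theorem. For a tame $g$ you could legitimately get simpliciality of $\Cone\{\ind(g)\}$ directly from the linear independence of tame indecomposables (the cited results of \cite{HY25}), but deducing the theorem from the dimension criterion is circular within this paper, and you supply no independent proof of that criterion. The only part of your plan that matches the paper is the easy inclusion $\Cone^{\circ}\{\ind(g)\}\subseteq[g]_{\TF}$ via Lemma~\ref{206430023172}; the substantive half of the theorem remains unproved.
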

	Secondly, for an arbitrary g-vector $g$, we establish that the cone of $g$ is a simplicial rational polyhedral convex cone. This allows us to prove Proposition \ref{061272712412}, which provides a necessary condition for Conjecture \ref{362256186221}.
	\begin{theorem}[\ref{076273627938} and \ref{355706191811}]
		Let $g$ be a g-vector. Then, there exists $t\in\mathbb{N}$ such that $tg$ satisfies the ray condition (refer to \cite[Definition 2.20]{HY25}), and
		\[\Cone\{\ind(\mathbb{N}g)\}=\Cone\{\ind(tg)\}.\] 
	\end{theorem}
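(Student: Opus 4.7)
My plan is to decompose the statement into its two halves, corresponding to Theorems \ref{076273627938} and \ref{355706191811}, and prove them in sequence. The first result is structural (the cone $\Cone\{\ind(\mathbb{N}g)\}$ is a simplicial rational polyhedral convex cone), and the second is an existence result (a single $t$ realizes all its extreme rays at once). The ray condition then drops out almost for free.

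The first step is to establish that $\Cone\{\ind(\mathbb{N}g)\}$ is a simplicial rational polyhedral convex cone. Rationality is immediate since each element of $\ind(mg)$ is an integer g-vector. For polyhedrality, the idea is to show that $\ind(\mathbb{N}g)$, while potentially infinite, lies on only finitely many rays of $\mathbb{R}^{|\Lambda|}$: the generic decomposition of any $mg$ consists of at most $|\Lambda|$ indecomposable summands forming an $E$-rigid (hence hom-orthogonal in the relevant sense) system, so each extreme direction of the union cone arises this way, and the combinatorial constraints bound the possible directions uniformly in $m$. Simpliciality then follows from the $E$-rigidity of the generators along a single generic decomposition, which already constrains them to lie linearly independently.

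Given the finitely many extreme rays, I would pick primitive integer generators $e_1, \ldots, e_d$, each realized as $h_i = c_i e_i \in \ind(n_i g)$ for some $n_i \in \mathbb{N}$ and $c_i \in \mathbb{Q}_{>0}$. The next step — and I expect this to be the main obstacle — is to produce a single $t$ such that positive multiples of every $e_i$ appear simultaneously in $\ind(tg)$. The strategy is to exploit an additivity principle for generic decompositions: if $h$ is a generic summand of $ng$ and is $E$-compatible with the generic decomposition of $mg$, then a multiple of $h$ persists as a summand of $(n+m)g$. Iterating and combining the decompositions of $n_1 g, \ldots, n_d g$ via the $E$-compatibility inherited from simpliciality of the ambient cone, one should be able to take $t$ to be a suitable multiple of $n_1 + \cdots + n_d$ (or lcm) and land all extreme-ray representatives in $\ind(tg)$ at once. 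The delicate point is that generic decomposition is not literally additive, so this requires a semi-continuity or Euclidean-type combinatorial argument rather than a direct substitution.

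Once such a $t$ is fixed, the ray condition for $tg$ follows quickly: by construction $\Cone\{\ind(tg)\} = \Cone\{\ind(\mathbb{N}g)\}$, so for every $k \in \mathbb{N}$ any indecomposable summand of the generic decomposition of $ktg$ belongs to $\Cone\{\ind(tg)\}$. Simpliciality forces every such summand to be a positive multiple of one of $e_1, \ldots, e_d$, i.e., of an indecomposable summand already present in $\ind(tg)$; this is precisely \cite[Definition 2.20]{HY25}. The entire argument therefore pivots on the passage from the finitely many witnesses $n_i$ to a single uniform $t$, which is where the real representation-theoretic content lies.
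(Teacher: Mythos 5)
Your outline identifies the right difficulty but does not actually overcome it, so there is a genuine gap. The crux of the statement is precisely your first step: showing that the directions occurring in $\bigcup_{m}\ind(mg)$ are finite in number, uniformly in $m$. Your justification (``the combinatorial constraints bound the possible directions uniformly in $m$'') is an assertion of the theorem, not a proof of it: since the ray condition can fail, $\ind(mg)$ genuinely changes with $m$, and nothing in the pairwise $E$-compatibility of a single generic decomposition prevents infinitely many directions from appearing as $m$ grows. The paper's proof has to work hard exactly here: it isolates the tame part via a reduced tame $f$ with $\ind(f)=\tame(\ind(\mathbb{N}g))$ (Proposition \ref{661790071494}), writes $tg=f_t\oplus h_t$, splits into the cases $\tame(\ind(\mathbb{N}g))=\emptyset$ (handled by Lemma \ref{781520404735}) and $\neq\emptyset$, and then runs a careful boundary analysis (Remark \ref{050599086025}, Theorem \ref{758672546350}) over the proper direct summands $f_i$ of $f$ to find a single $t$ with $\partial\Cone\{\ind(tg)\}\cap\Cone^{\circ}\{\ind(\mathbb{N}g)\}=\emptyset$. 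Your step two, producing one $t$ realizing all rays simultaneously, is likewise only sketched (``one should be able to take $t$ to be a suitable multiple''); the tools that make such persistence arguments work in the paper are the $D_g$-machinery of Lemma \ref{076626606506}, Proposition \ref{144840550539} and Lemma \ref{072929369060}, and you would need to invoke something of that strength explicitly. The acknowledged gap pointed out by Asai in an earlier draft of exactly this theorem is evidence that this is not a place where a plausibility argument suffices.

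Two further steps do not go through as stated. Simpliciality does not follow from ``$E$-rigidity of the generators along a single generic decomposition'': linear independence of $\ind(g)$ is proved in the paper only for \emph{reduced} $g$ (Proposition \ref{700686515031}), and the passage to a reduced version (Lemma-definition \ref{090299082299}, Remark \ref{276955124928}) is what delivers a simplicial cone. Finally, your derivation of the ray condition is a non sequitur: an element of $\ind(ktg)$ lies in $\Cone\{\ind(tg)\}$, but membership in a simplicial cone does not force a vector onto one of its extreme rays. The paper instead takes primitive generators $g_1,\dots,g_s$ of the extreme rays, replaces any $g_i\in D_{g_i}$ by a tame multiple so that $|\ind(tg_i)|=1$ for all $t$, verifies the direct-sum relation $t'\sum_i g_i=\bigoplus_i t'g_i$, and only then concludes the ray condition for a suitable multiple of $g$; some such compatibility argument is indispensable and is missing from your proposal.
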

	Thirdly, we establish the following theorem, which is closely related to \cite[Proposition 5.4]{AsIy24}, but does not require the ray condition.
	\begin{theorem}[\ref{962650431975} and \ref{401237755092}]\label{961838105897}
		For a g-vector $g$, consider the following conditions.
		\begin{tasks}[style=itemize](2)
			\task[$(1)$]
			$\scalemath{0.9}{\dim_{\mathbb{R}}\langle [g]_{\TF} \rangle_{\mathbb{R}}= \dim_{\mathbb{R}} \langle\ind(\mathbb{N}g)\rangle_{\mathbb{R}}}$.
			\task[$(2)$]
			$\scalemath{0.9}{[g]_{\TF}^{\circ}=\Cone^{\circ}\{\ind(\mathbb{N}g)\}}$.
			\task[$(3)$] $\scalemath{0.9}{\dim_{\mathbb{R}}W_g= |\Lambda|-|\ind(\mathbb{N}g)|}$.
			\task[$(4)$] $\scalemath{0.9}{W_g=\Ker\langle-,[g]_{\TF}\rangle}$.
		\end{tasks}
		Then,
		\[(1)\Longleftrightarrow(2)\Longleftarrow(2)+(4)\Longleftrightarrow(3).\]
	\end{theorem}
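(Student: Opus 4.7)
The plan is to combine three inputs that are either classical or derived from earlier results in the paper: (i) the containment $\Cone\{\ind(\mathbb{N}g)\}\subseteq\overline{[g]_{\TF}}$, which comes from the fact that each indecomposable summand of $tg$ lies in the closure of $[g]_{\TF}$; (ii) the simpliciality of the cone of $g$ together with the existence, from the preceding theorem, of some $t\in\mathbb{N}$ such that $tg$ satisfies the ray condition and $\Cone\{\ind(\mathbb{N}g)\}=\Cone\{\ind(tg)\}$, which together force $\dim_{\mathbb{R}}\langle\ind(\mathbb{N}g)\rangle_{\mathbb{R}}=|\ind(\mathbb{N}g)|$; and (iii) the tautological inclusion $W_g\subseteq\Ker\langle-,[g]_{\TF}\rangle$, which follows because every $h\in[g]_{\TF}$ determines the same wall as $g$. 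The implications $(2)\Rightarrow(1)$ and $(2)+(4)\Rightarrow(2)$ are formal, so the content lies in $(1)\Rightarrow(2)$ and in the loop $(3)\Leftrightarrow(2)+(4)$.

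For $(1)\Rightarrow(2)$, after using (ii) to replace $g$ by a suitable multiple without changing either cone, I would compare $C_1:=\Cone\{\ind(tg)\}$ and $C_2:=\overline{[g]_{\TF}}$. By (i) one has $C_1\subseteq C_2$, and hypothesis (1) forces them to span the same real subspace $V$. Since $C_1$ is simplicial with rays given by $\ind(tg)$, it is a full-dimensional cone in $V$ with well-behaved barycentric coordinates. I would then show $C_2^{\circ}\subseteq C_1^{\circ}$ by picking an interior point $v$ of $C_2$, producing via convexity a small open neighbourhood of $v$ inside $C_2$, and using simpliciality together with continuity of the barycentric coordinates to land this neighbourhood inside $C_1^{\circ}$. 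The reverse inclusion is the standard relative-interior monotonicity of convex cones of matching dimension.

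For $(2)+(4)\Leftrightarrow(3)$, both directions hinge on the identity $\dim_{\mathbb{R}}\langle\ind(\mathbb{N}g)\rangle_{\mathbb{R}}=|\ind(\mathbb{N}g)|$ from (ii). Assuming $(2)+(4)$, condition (4) yields $\dim_{\mathbb{R}}W_g=|\Lambda|-\dim_{\mathbb{R}}\langle[g]_{\TF}\rangle_{\mathbb{R}}$, while (2)$\Rightarrow$(1) identifies the right-hand side with $|\Lambda|-|\ind(\mathbb{N}g)|$, giving (3). Conversely, assuming (3), the chain
\[|\Lambda|-|\ind(\mathbb{N}g)|=\dim_{\mathbb{R}}W_g\le|\Lambda|-\dim_{\mathbb{R}}\langle[g]_{\TF}\rangle_{\mathbb{R}}\le|\Lambda|-\dim_{\mathbb{R}}\langle\ind(\mathbb{N}g)\rangle_{\mathbb{R}}=|\Lambda|-|\ind(\mathbb{N}g)|\]
collapses to equalities, yielding (1), hence (2) by the already proven equivalence, and also $\dim_{\mathbb{R}}W_g=\dim_{\mathbb{R}}\Ker\langle-,[g]_{\TF}\rangle$; combined with (iii) and the linearity properties of $W_g$ supplied by the wall-and-chamber description, this dimension match upgrades (iii) to the set equality (4).

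The principal obstacle I anticipate is the promotion of dimensional data to set-theoretic conclusions at two places. In $(1)\Rightarrow(2)$, one must exploit simpliciality to squeeze $C_2^{\circ}$ into $C_1$, since for a non-simplicial cone this step would fail; the preceding simpliciality theorem is therefore indispensable here. In $(3)\Rightarrow(4)$, the subtle point is that a closed convex cone of full dimension inside a linear subspace need not be the whole subspace, so one must combine the dimension count with the precise description of $W_g$ from the wall-and-chamber machinery to conclude that $W_g$ actually fills $\Ker\langle-,[g]_{\TF}\rangle$.
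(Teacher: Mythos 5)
Your handling of $(2)+(4)\Leftrightarrow(3)$ is essentially the paper's own argument: the chain $\dim_{\mathbb{R}}W_g\le|\Lambda|-\dim_{\mathbb{R}}\langle[g]_{\TF}\rangle_{\mathbb{R}}\le|\Lambda|-\dim_{\mathbb{R}}\langle\ind(\mathbb{N}g)\rangle_{\mathbb{R}}$ of Remark \ref{401237755090}, collapsed to equalities, plus the fact that $W_g$ is a linear subspace of $\Ker\langle-,[g]_{\TF}\rangle$ so that a dimension match forces the set equality $(4)$; the formal implications $(2)\Rightarrow(1)$ and $(2)+(4)\Rightarrow(2)$, and the inclusion $\Cone^{\circ}\{\ind(\mathbb{N}g)\}\subseteq[g]_{\TF}^{\circ}$ under $(1)$, are also fine. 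The problem is the core implication $(1)\Rightarrow(2)$, specifically the step where you propose to ``squeeze'' $C_2^{\circ}$ into $C_1^{\circ}$ using simpliciality of $C_1=\Cone\{\ind(tg)\}$ and continuity of barycentric coordinates. That is a purely convex-geometric argument, and no such argument can succeed: for nested closed convex cones $C_1\subseteq C_2$ spanning the same subspace, the relative interior of the larger is in general not contained in the smaller, even when the smaller is simplicial and full-dimensional (take $C_1=\Cone\{e_1,e_2\}$ inside the closed half-plane $C_2=\Cone\{e_1,-e_1,e_2\}$ in $\mathbb{R}^2$). Barycentric coordinates only detect membership in $C_1$; they say nothing about points of $C_2^{\circ}$ lying outside $C_1$, which is exactly what must be excluded. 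Your own diagnosis that ``simpliciality is indispensable here'' misidentifies the needed input.

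What actually has to be ruled out is that $[g]_{\TF}$ (or its interior) meets $\partial\Cone\{\ind(\mathbb{N}g)\}$ or escapes the cone altogether, and this is representation-theoretic, not geometric. In the paper it is Lemma \ref{988807280630}, namely $\Cone\{\ind(\mathbb{N}g)\}\cap[g]_{\TF}^{\circ}\subseteq\Cone^{\circ}\{\ind(\mathbb{N}g)\}$, whose proof is the bulk of the work: it runs through the tame case (Theorem \ref{376103330122}), the sets $D_g$ and the direct-sum criterion of Lemma \ref{076626606506}, the rigidity of intersecting open cones (Lemma \ref{072929369060}), the tame-part decomposition of Proposition \ref{661790071494}, and Lemma \ref{781520404735}. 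Given that lemma, the paper's proof of $(1)\Rightarrow(2)$ is short: under $(1)$ the open cone is open in $\langle[g]_{\TF}\rangle_{\mathbb{R}}$, hence $g\in[g]_{\TF}^{\circ}$; the lemma then gives $\partial\Cone\{\ind(\mathbb{N}g)\}\cap[g]_{\TF}^{\circ}=\emptyset$, and convexity of $[g]_{\TF}$ yields $[g]_{\TF}^{\circ}\subseteq\Cone^{\circ}\{\ind(\mathbb{N}g)\}$. Unless you prove such a boundary-avoidance statement (or reprove its ingredients), your argument for $(1)\Rightarrow(2)$ does not close, and with it the implication $(3)\Rightarrow(2)$ also fails, since you derive it from $(1)\Rightarrow(2)$.
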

	Inspired by the proof of \cite[Proposition 5.4]{AsIy24}, we prove the following lemma, which does not require the ray condition and is crucial for demonstrating Theorem \ref{961838105897}.
	\begin{lemma}[\ref{988807280630}]
		Let $g$ be a g-vector. Then
		\[\Cone\{\ind(\mathbb{N}g)\}\cap [g]^{\circ}_{\TF}\subseteq\Cone^{\circ}\{\ind(\mathbb{N}g)\}\subseteq [g]_{\TF}.\]
		Especially, if $g\in[g]^{\circ}_{\TF}$, then
		\[\Cone\{\ind(\mathbb{N}g)\}\cap [g]^{\circ}_{\TF}=\Cone^{\circ}\{\ind(\mathbb{N}g)\}.\]
	\end{lemma}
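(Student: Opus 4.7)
The plan is to prove the two displayed inclusions separately, and then derive the ``especially'' claim as a formal consequence of both.

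For the right-hand inclusion $\Cone^{\circ}\{\ind(\mathbb{N}g)\}\subseteq [g]_{\TF}$, I would combine the Asai--Iyama principle $\ind(g_1)=\ind(g_2)\Rightarrow[g_1]_{\TF}=[g_2]_{\TF}$ (recalled in the introduction) with the rationality and simpliciality of $\Cone\{\ind(\mathbb{N}g)\}$ from Theorems~\ref{076273627938} and~\ref{355706191811}. Fix $t\in\mathbb{N}$ with $\Cone\{\ind(\mathbb{N}g)\}=\Cone\{\ind(tg)\}$, so that the finitely many members of $\ind(tg)$ are primitive generators of the extremal rays. Any $h$ in the relative interior is then a strictly positive rational combination of these generators, and clearing denominators yields a positive integer combination $Nh$. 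The key sub-claim is that elements of $\ind(\mathbb{N}g)$ are pairwise $E$-compatible, which is natural because they coexist as summands of the generic decomposition of some common $ng$. This ensures that the generic decomposition of $Nh$ picks up precisely the prescribed summands, so $\ind(Nh)=\ind(tg)$. The Asai--Iyama criterion then forces $[Nh]_{\TF}=[tg]_{\TF}=[g]_{\TF}$, and scale invariance of TF-classes transports the conclusion to $h$.

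For the left-hand inclusion $\Cone\{\ind(\mathbb{N}g)\}\cap[g]^{\circ}_{\TF}\subseteq\Cone^{\circ}\{\ind(\mathbb{N}g)\}$, I would argue by contradiction. Suppose $h$ lies in the intersection but on a proper face $F$. Openness in $[g]_{\TF}$ provides a relatively open neighborhood $U\subseteq[g]_{\TF}$. Using the right-hand inclusion we already have in hand, the affine hull of $[g]_{\TF}$ contains the affine hull of $\Cone\{\ind(\mathbb{N}g)\}$, so one finds a direction $v$ in this common affine hull moving $h$ off $F$ \emph{away} from $\Cone\{\ind(\mathbb{N}g)\}$. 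Then $h+\varepsilon v\in U\subseteq[g]_{\TF}$ for small $\varepsilon>0$, yet $h+\varepsilon v\notin\Cone\{\ind(\mathbb{N}g)\}$, so its generic decomposition must acquire an indecomposable summand outside $\ind(\mathbb{N}g)$. This contradicts the invariance of indecomposable data under TF-equivalence prescribed by $[h+\varepsilon v]_{\TF}=[g]_{\TF}$, so $h$ cannot lie on a proper face.

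The ``especially'' statement is then a short corollary. Assuming $g\in[g]^{\circ}_{\TF}$, we have $g\in\Cone\{\ind(g)\}\subseteq\Cone\{\ind(\mathbb{N}g)\}$, so the left-hand inclusion places $g$ in $\Cone^{\circ}\{\ind(\mathbb{N}g)\}$. Together with the right-hand inclusion and the matching of affine hulls around $g$, this upgrades to $\Cone^{\circ}\{\ind(\mathbb{N}g)\}\subseteq[g]^{\circ}_{\TF}$, yielding the reverse inclusion.

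The main obstacle I foresee is the $E$-compatibility sub-claim in the right-hand inclusion: translating the ``coexistence in various $\ind(ng)$'' of members of $\ind(\mathbb{N}g)$ into the assertion that an arbitrary positive integer combination of them inherits the obvious generic decomposition is the genuinely new bookkeeping required, and it must be carried out using the generic decomposition theory developed earlier in the paper (in particular the machinery from \cite{HY25}). Once that step is secured, both inclusions fold in cleanly around the polyhedral structure of the cone and the Asai--Iyama identification of TF-classes.
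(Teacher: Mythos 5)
The central gap is in your proof of the left-hand inclusion. After perturbing $h$ off the face to a point $h+\varepsilon v\in[g]_{\TF}\setminus\Cone\{\ind(\mathbb{N}g)\}$, you derive the contradiction from an ``invariance of indecomposable data under TF-equivalence.'' No such invariance is available: Theorem \ref{310913388628} gives only the implication $\ind(g_1)=\ind(g_2)\Rightarrow[g_1]_{\TF}=[g_2]_{\TF}$, and its converse is proved only for tame g-vectors. The possibility that $[g]_{\TF}$ contains points whose generic decomposition involves summands outside $\ind(\mathbb{N}g)$ is exactly what Conjecture \ref{362256186221} is about; if your step were legitimate it would yield $[g]_{\TF}\subseteq\Cone\{\ind(\mathbb{N}g)\}$ outright and essentially settle the conjecture. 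There is a second problem in the same step: $h+\varepsilon v$ is merely a real vector, so it has no generic decomposition, and replacing it by an integral (or rational) point of $[g]_{\TF}$ off the cone would require rational points to be dense in $[g]_{\TF}$ near $h$, which is itself the open Conjecture \ref{404319523362}(1). The paper instead proves this inclusion by a case analysis on tame summands: if $\tame(\ind(\mathbb{N}g))=\emptyset$, Lemma \ref{781520404735} together with Theorem \ref{758672546350} shows boundary points of the cone cannot lie in $[g]_{\TF}$ at all; if tame summands exist, Proposition \ref{661790071494} combined with Lemmas \ref{076626606506} and \ref{072929369060} rules out boundary points in $[g]^{\circ}_{\TF}$.

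Two further points. For the right-hand inclusion, your detour through $\ind(Nh)=\ind(tg)$ is both unnecessary and incorrect when wild summands occur with multiplicity: for a wild generically indecomposable $g_i$ one has $c_ig_i\neq g_i^{\oplus c_i}$, so $\ind(Nh)$ will in general contain $c_ig_i\notin\ind(tg)$, and likewise the elements of the full union $\ind(\mathbb{N}g)$ need not be pairwise $E$-compatible (compare $g$ and $2g$ for $g$ wild indecomposable). The inclusion follows directly from Lemma \ref{206430023172}, namely $\Cone^{\circ}\{\ind(tg)\}\subseteq[tg]_{\TF}=[g]_{\TF}$ for every $t$, together with the definition of $\Cone^{\circ}\{\ind(\mathbb{N}g)\}$ as a union; this is the paper's one-line argument. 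Finally, in the ``especially'' part your appeal to ``matching of affine hulls'' presupposes $\dim_{\mathbb{R}}\langle\ind(\mathbb{N}g)\rangle_{\mathbb{R}}=\dim_{\mathbb{R}}\langle[g]_{\TF}\rangle_{\mathbb{R}}$, which is neither assumed nor implied here (it is the separate equivalent condition of Theorem \ref{962650431975}); the correct route is the convexity statement of Corollary \ref{342694140028}, which converts $g\in[g]^{\circ}_{\TF}$ into $\Cone^{\circ}\{\ind(\mathbb{N}g)\}\subseteq[g]^{\circ}_{\TF}$.
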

	Furthermore, we develop a tool that allows us to prove that each subset of $\TF^{\mathrm{ss}}_{\mathbb{Z}}(\Lambda)$ has a minimal element (see Corollary \ref{868524213012}). This tool also plays a crucial role in the proof of Theorem \ref{948850386317}.
	\begin{proposition}[\ref{206559753624}]
		Let $\theta,\eta\in K_0(\proj\Lambda)_{\mathbb{R}}$. If $\eta\in\partial [\theta]_{\TF}\setminus [\theta]_{\TF}$, then
		\[\dim_{\mathbb{R}}\langle [\eta]_{\TF} \rangle_{\mathbb{R}}\lneqq\dim_{\mathbb{R}}\langle [\theta]_{\TF} \rangle_{\mathbb{R}}.\]
	\end{proposition}
	Subsequently, we break down Conjecture \ref{362256186221} into three simpler conjectures designed to provide more accessible paths for future investigations. Below, we provide evidence supporting this conjecture.
	\begin{conjecture}\label{404319523362}
		Let $g$ be a g-vector. Then, the following conditions hold:
		\begin{itemize}
			\item[$(1)$] Rational points are dense in $[g]_{\TF}$.
			\item[$(2)$] $\dim_{\mathbb{R}}\langle\ind(\mathbb{N}g)\rangle_{\mathbb{R}}=1$ if and only if $[g]_{\TF}=\mathbb{R}^{> 0}g$.
			\item[$(3)$] $|\TF^{\mathrm{ss}}_{\mathbb{Z}}(g)|<\infty$ (see Definition \ref{142707043868}).
		\end{itemize}
	\end{conjecture}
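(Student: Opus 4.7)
My plan is to attack the three parts of Conjecture \ref{404319523362} separately, leveraging the simplicial rational polyhedral structure of $\Cone\{\ind(\mathbb{N}g)\}$ established in Theorem \ref{076273627938}, together with Lemma \ref{988807280630} and Theorem \ref{961838105897}.

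For part (1), I would first note that $\Cone^{\circ}\{\ind(\mathbb{N}g)\}\subseteq[g]_{\TF}$ by Lemma \ref{988807280630}, and this cone is rational, so it already carries a dense set of rational points. For any $\theta\in[g]_{\TF}$ lying outside this sub-cone, I would form convex combinations with a fixed rational point $\theta_0$ in the relative interior and invoke the star-shapedness of $\TF$-classes: since semistability is governed by a linear functional, the segment $[\theta_0,\theta]$ should lie in $[g]_{\TF}$, so rational approximants along this segment remain in the class. Pairing this with the decomposition of $[g]_{\TF}$ into its (rational) interior piece plus a boundary stratified by lower-dimensional $\TF$-classes via Proposition \ref{206559753624} gives density everywhere.

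For part (2), the $\Longleftarrow$ direction is immediate from Lemma \ref{988807280630}: if $[g]_{\TF}=\mathbb{R}^{>0}g$, then $\Cone^{\circ}\{\ind(\mathbb{N}g)\}\subseteq\mathbb{R}^{>0}g$ forces $\dim_{\mathbb{R}}\langle\ind(\mathbb{N}g)\rangle_{\mathbb{R}}=1$. For $\Longrightarrow$, a $1$-dimensional span puts us in the equivalent conditions of Theorem \ref{961838105897}, yielding $[g]_{\TF}^{\circ}=\Cone^{\circ}\{\ind(\mathbb{N}g)\}\subseteq\mathbb{R}^{>0}g$; combined with part (1) and the positive-scaling invariance of $\TF$-classes, $[g]_{\TF}$ must collapse to the full ray $\mathbb{R}^{>0}g$.

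For part (3), I propose an induction on $\dim_{\mathbb{R}}\langle[g]_{\TF}\rangle_{\mathbb{R}}$: Proposition \ref{206559753624} provides a strict dimension drop on passing to a boundary class, so the descent terminates after at most $|\Lambda|$ steps. The inductive step requires bounding the number of integer semistable classes in $[g]_{\TF}$ via the finite combinatorial data of faces of the simplicial cone $\Cone\{\ind(\mathbb{N}g)\}$. This is precisely where I expect the main obstacle to lie: Proposition \ref{206559753624} gives well-founded descent but not an a priori bound on the lateral branching of the stratification, which is essentially the content of the conjecture and the reason the three sub-conjectures are not yet theorems. Once all three parts are in hand, Conjecture \ref{362256186221} should follow by using (1) to reduce to rational (hence, after clearing denominators, integer) g-vectors, (3) to perform a finite induction on walls, and (2) as the $1$-dimensional base case — a reduction I expect to be cleaner than any of the individual pieces.
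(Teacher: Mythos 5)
You have been asked about a statement that the paper itself leaves open: Conjecture \ref{404319523362} is not proved anywhere in the paper, whose main contribution in this direction (Theorem \ref{948850386317}) is only an \emph{equivalence} between Conjecture \ref{404319523362} and Conjecture \ref{362256186221}. So a complete argument here would settle an open problem, and your sketches do not close the genuinely open points. For part (1), convexity of $[g]_{\TF}$ (Proposition \ref{987463920193}) together with a rational point $\theta_0\in\Cone^{\circ}\{\ind(\mathbb{N}g)\}$ does not yield rational points near an arbitrary $\theta\in[g]_{\TF}$: the points $t\theta_0+(1-t)\theta$ on your segment are irrational whenever $\theta$ is, and a convex cone may contain no rational points at all in a neighbourhood of $\theta$ (think of an irrational ray); excluding exactly this behaviour is the content of (1), so the argument is circular. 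For part (2), only the implication $[g]_{\TF}=\mathbb{R}^{>0}g\Rightarrow\dim_{\mathbb{R}}\langle\ind(\mathbb{N}g)\rangle_{\mathbb{R}}=1$ is immediate; in the converse you appeal to Theorem \ref{962650431975}, but its hypothesis is $\dim_{\mathbb{R}}\langle[g]_{\TF}\rangle_{\mathbb{R}}=\dim_{\mathbb{R}}\langle\ind(\mathbb{N}g)\rangle_{\mathbb{R}}$, which does \emph{not} follow from the right-hand side being $1$: the possibility that $[g]_{\TF}$ is strictly higher-dimensional than the cone of $g$ is precisely what is open (the paper itself remarks that (2) is just Conjecture \ref{362256186221} restricted to the one-dimensional case). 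For part (3) you concede that Proposition \ref{206559753624} only provides well-founded descent with no control on the number of semistable torsion pairs below $(\overline{\mathcal{T}}_g,\overline{\mathcal{F}}_g)$, so nothing is established there either.

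What your proposal does capture correctly is the reduction in its last sentence: that (1)--(3) together imply Conjecture \ref{362256186221} by induction on $\dim_{\mathbb{R}}\langle[g]_{\TF}\rangle_{\mathbb{R}}$, with Proposition \ref{206559753624} driving the descent, (3) giving finitely many boundary classes, (2) the one-dimensional base case, and (1) supplying enough rational (hence, after clearing denominators, integral) vectors. That is essentially the paper's proof of Theorem \ref{948850386317} — but it is a proof of the equivalence of the two conjectures, not of Conjecture \ref{404319523362} itself, which remains open.
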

	The first condition is equivalent to the existence of a convex rational polyhedral cone within $[g]_{\TF}$ of dimension $\dim_{\mathbb{R}}\langle[g]_{\TF}\rangle_{\mathbb{R}}$.
	So instead of $\ind(\mathbb{N}g)$ (see Theorem \ref{961838105897}), it is enough to find arbitrary g-vectors $\{h^{1},h^{2},\ldots,h^{s}\}$ in $[g]_{\TF}$ such that
	\[\dim_{\mathbb{R}}\langle h^{i}\mid 1\le i\le s\rangle_{\mathbb{R}}=\dim_{\mathbb{R}}\langle [g]_{\TF}\rangle_{\mathbb{R}}.\]
	Moreover, the second condition states that Conjecture \ref{362256186221} holds for a g-vector $g$ with $\dim_{\mathbb{R}}\langle\ind(\mathbb{N}g)\rangle_{\mathbb{R}}=1$.
	Furthermore, the third condition means that there are only finitely many $\TF$-equivalence classes of g-vectors contained in $\overline{[g]_{\TF}}$. We expect the first two conditions to follow from the third.
	\begin{theorem}[\ref{948850386317}]
		Conjecture \ref{362256186221} and Conjecture \ref{404319523362} are equivalent.
	\end{theorem}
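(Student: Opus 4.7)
The strategy is to handle the two implications separately. The forward direction follows fairly directly from Conjecture \ref{362256186221} combined with Theorem \ref{961838105897} and Proposition \ref{206559753624}, while the backward direction requires a more careful induction leveraging the finiteness hypothesis \ref{404319523362}(3).

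\textbf{Forward direction.} Assuming Conjecture \ref{362256186221}, I verify parts (1), (2), (3) of Conjecture \ref{404319523362} in turn. Part (1) follows because the equality $[g]^{\circ}_{\TF}=\Cone^{\circ}\{\ind(\mathbb{N}g)\}$ exhibits $[g]^{\circ}_{\TF}$ as the relative interior of a rational simplicial cone (by the earlier rationality results in the paper), so its rational points are dense in $[g]^{\circ}_{\TF}$ and hence in $[g]_{\TF}$ (the relative interior of a convex set being dense in it). For part (2), the implication $\dim_{\mathbb{R}}\langle\ind(\mathbb{N}g)\rangle_{\mathbb{R}}=1\Rightarrow [g]_{\TF}=\mathbb{R}^{> 0}g$ is obtained by combining the conjecture with Theorem \ref{961838105897}((1)$\Leftrightarrow$(2)) to force $\dim\langle [g]_{\TF}\rangle_{\mathbb{R}}=1$, pinning $[g]_{\TF}$ to the line $\mathbb{R}g$ and, by a connectivity argument and the fact that $0\notin[g]_{\TF}$, forcing equality with $\mathbb{R}^{> 0}g$; the reverse implication is analogous. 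For part (3), I induct on $\dim_{\mathbb{R}}\langle [g]_{\TF}\rangle_{\mathbb{R}}$: Proposition \ref{206559753624} guarantees that every TF-class in $\overline{[g]_{\TF}}\setminus [g]_{\TF}$ has strictly smaller spanning dimension, the conjecture renders each such class the relative interior of a rational polyhedral cone, and the finiteness of faces of such cones combined with the inductive hypothesis bounds the total number of boundary TF-classes.

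\textbf{Backward direction.} Assuming Conjecture \ref{404319523362}, by Theorem \ref{961838105897}((1)$\Leftrightarrow$(2)) it suffices to prove $\dim_{\mathbb{R}}\langle [g]_{\TF}\rangle_{\mathbb{R}}=\dim_{\mathbb{R}}\langle\ind(\mathbb{N}g)\rangle_{\mathbb{R}}$ for every g-vector $g$. I would proceed by induction on $\dim_{\mathbb{R}}\langle [g]_{\TF}\rangle_{\mathbb{R}}$, with a secondary induction (where needed) on $|\TF^{\mathrm{ss}}_{\mathbb{Z}}(g)|$, which is finite by (3). The one-dimensional base case is precisely Conjecture \ref{404319523362}(2). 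For the inductive step, (3) together with Proposition \ref{206559753624} decomposes $\overline{[g]_{\TF}}\setminus [g]_{\TF}$ into a finite union of strictly lower-dimensional TF-classes, each of which (by induction and Theorem \ref{961838105897}) equals the relative interior of a rational polyhedral cone generated by its own $\ind(\mathbb{N}\cdot)$. Using (1), I choose integer g-vectors $h^{1},\ldots,h^{s}\in [g]_{\TF}$ whose linear span is all of $\langle [g]_{\TF}\rangle_{\mathbb{R}}$.

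The main obstacle is then to transfer this spanning information from the $h^{j}$'s back to $\ind(\mathbb{N}g)$, since TF-equivalent g-vectors need not a priori share indecomposable summand data. My strategy is a proof by contradiction: if $\dim_{\mathbb{R}}\langle\ind(\mathbb{N}g)\rangle_{\mathbb{R}}$ were strictly smaller than $\dim_{\mathbb{R}}\langle [g]_{\TF}\rangle_{\mathbb{R}}$, then by Lemma \ref{988807280630} the set $[g]_{\TF}\setminus \Cone\{\ind(\mathbb{N}g)\}$ would be non-empty and relatively open in $[g]_{\TF}$, and density from (1) would produce a rational g-vector $h$ inside this piece. Applying Conjecture \ref{404319523362} and the inductive apparatus to $h$ (via the secondary induction, with Corollary \ref{868524213012} supplying a minimal element of $\TF^{\mathrm{ss}}_{\mathbb{Z}}(g)$), combined with Proposition \ref{206559753624}, should force $[h]_{\TF}$ to sit inside $\overline{[g]_{\TF}}$ and contribute a TF-class outside the finite list already extracted from $\overline{[g]_{\TF}}$, contradicting (3). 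Setting up this contradiction cleanly---in particular, choosing the induction parameters so that the secondary induction is well-founded and the minimality argument of Corollary \ref{868524213012} applies---is where the bulk of the real work lies.
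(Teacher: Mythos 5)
Your forward direction is essentially sound (the paper dispatches it even faster via Proposition \ref{061272712412}), but the backward direction contains a genuine gap, and it sits exactly at the point you yourself flag as "the bulk of the real work." Suppose $\dim_{\mathbb{R}}\langle\ind(\mathbb{N}g)\rangle_{\mathbb{R}}\lneqq\dim_{\mathbb{R}}\langle [g]_{\TF}\rangle_{\mathbb{R}}$ and you use density (Conjecture \ref{404319523362}(1)) to find a rational $h\in [g]_{\TF}\setminus\Cone\{\ind(\mathbb{N}g)\}$. Such an $h$ satisfies $[h]_{\TF}=[g]_{\TF}$ by definition of $\TF$-equivalence classes, so it cannot "contribute a TF-class outside the finite list already extracted from $\overline{[g]_{\TF}}$": its semistable torsion pair is literally the pair attached to $[g]_{\TF}$, which is already in $\TF^{\mathrm{ss}}_{\mathbb{Z}}(g)$, and no contradiction with (3) arises. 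Condition (3) counts torsion pairs, not cones, and it is perfectly consistent with (3) that a single class $[g]_{\TF}$ a priori contain infinitely many integer points $h$ with pairwise different cones $\Cone\{\ind(\mathbb{N}h)\}$; ruling that out is precisely the content of the conjecture, so the contradiction you aim for is circular. The secondary induction also cannot rescue this: $\TF^{\mathrm{ss}}_{\mathbb{Z}}(h)$ depends only on the pair $(\overline{\mathcal{T}}_h,\overline{\mathcal{F}}_h)=(\overline{\mathcal{T}}_g,\overline{\mathcal{F}}_g)$, so $|\TF^{\mathrm{ss}}_{\mathbb{Z}}(h)|=|\TF^{\mathrm{ss}}_{\mathbb{Z}}(g)|$ and the parameter never decreases when you pass from $g$ to $h$.

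What is missing is a mechanism linking the generic summand data of interior integer points to the finitely many boundary classes, and this is where the paper's proof does its real work. It takes a reduced representative $g'\in[g]_{\TF}^{\circ}$ (using Conjecture \ref{404319523362}(2) and Remark \ref{276955124928} to ensure $|\ind(g')|\ge 2$ — note your sketch never uses (2) in the inductive step), and proves via Lemma \ref{041608591070} that $\ind(g')\subseteq\partial[g]_{\TF}\setminus[g]_{\TF}$; hence every generically indecomposable summand of $g'$ lands in one of the finitely many boundary classes $[h^i]_{\TF}$, whose open cones are $\Cone^{\circ}\{\ind(\mathbb{N}h^i)\}$ by the dimension induction. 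Lemmas \ref{072929369060} and \ref{076626606506} then let one assemble $h^{I'}=\bigoplus_{i\in I'}th^i$ and conclude $g'\in\Cone\{\ind(\mathbb{N}h^{I'})\}$, so $[g]_{\TF}^{\circ}\cap K_0(\proj\Lambda)_{\mathbb{Q}}$ is covered by finitely many such cones; density (1) forces one of them to have dimension $\dim_{\mathbb{R}}\langle[g]_{\TF}\rangle_{\mathbb{R}}$, and Theorem \ref{962650431975} closes the argument. Without an analogue of Lemma \ref{041608591070} and this finite covering step, your proposed contradiction does not get off the ground.
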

	According to \cite{As21}, we know that the condition $\dim_{\mathbb{R}}\langle\ind(\mathbb{N}g)\rangle_{\mathbb{R}}=|\Lambda|$ implies that $g$ belongs to the interior of the g-vector fan. Therefore, in this case, $g$ is a tame g-vector and by Theorem \ref{376103330121}, $[g]_{\TF}=\Cone^{\circ}\{\ind(g)\}$. Moreover, we establish the conjecture for cones of dimension $|\Lambda|-1$.
	\begin{corollary}[\ref{401237755092}]
		Let $g$ be a g-vector. If $\dim_{\mathbb{R}}\langle\ind(\mathbb{N}g)\rangle_{\mathbb{R}}=|\Lambda|-1$, then $[g]_{\TF}^{\circ}=\Cone^{\circ}\{\ind(\mathbb{N}g)\}$.
	\end{corollary}
	\section{Conventions}
	Throughout this paper, let $\Lambda$ a basic finite-dimensional algebra over an algebraically closed field $k$. By Morita equivalence, we may assume $\Lambda=kQ/I$, where $(Q, I)$ is a bound quiver with $n$ vertices. To simplify notation, we assume that $Q$ is connected. In this setting, there are $n$ simple modules up to isomorphism, typically denoted by $\{S_{(1)},S_{(2)},\ldots,S_{(n)}\}$. The Grothendieck group of the category of finitely generated $\Lambda$-modules, denoted by $K_{0}(\mod\Lambda)$, is a free abelian group of rank $n$ with a basis $\{[S_{(1)}],[S_{(2)}],\ldots,[S_{(n)}]\}$. Similarly, the Grothendieck group of the category of finitely generated projective $\Lambda$-modules, $K_0(\proj\Lambda)$, is also a free abelian group of rank $n$, with a basis consisting of the classes of projective modules $\{[P_{(1)}],[P_{(2)}],\ldots,[P_{(n)}]\}$, where $P_{(i)}$ is the projective cover of the simple module $S_{(i)}$. The elements of $K_0(\mod\Lambda)_\mathbb{R}$ and $K_0(\proj\Lambda)_\mathbb{R}$  are viewed as vectors in $\mathbb{R}^n$. Finally, we fix a complete set of primitive orthogonal idempotents $\{e_1,\ldots,e_n\}$ corresponding to $\{P_{(1)},\ldots, P_{(n)}\}$.
	\section{Preliminaries and background}
	An approach to studying the wall-chamber structures of finite-dimensional algebras is to explore $\TF$-equivalence classes.
	In this point of view, the stability spaces and the cones associated with $\tau$-rigid pairs play central roles. 
	This section reviews key findings from \cite{As21, AsIy24} regarding the relationships of these notions. Subsequently, the correspondence among the chambers and the cones of $\tau$-tilting pairs is also studied. This is what demonstrates the relationship between the wall-chamber structure and the g-vector fan \cite{As21}. Although most of these results are not used directly in the proofs of our main results, they are essential for understanding the themes of this paper.
	
	Let $\mathcal{C}$ be a full subcategory of $\mod\Lambda$.
	\begin{itemize}
		\item $\mathcal{C}$ is called \textit{covariantly finite} if every $\Lambda$-module $X$ has a left $\mathcal{C}$-approximation, that is, there is a map $X\rightarrow C$ with $C\in\mathcal{C}$ such that for all $C'\in\mathcal{C}$ the induced map $\Hom(C,C')\rightarrow\Hom(X,C')$ is onto.
		\item $\mathcal{C}$ is called \textit{contravariantly finite} if every $\Lambda$-module $X$ has a right $\mathcal{C}$-approximation, that is, there is a map $C\rightarrow X$ with $C\in\mathcal{C}$ such that for all $C'\in\mathcal{C}$ the induced map $\Hom(C',C)\rightarrow\Hom(C',X)$ is onto.
			\item $\mathcal{C}$ is called \textit{functorially finite} if it is both covariantly finite and contravariantly finite.
	\end{itemize}
		A full subcategory of $\mod\Lambda$ is said to be a \textit{torsion class} (respectively, a \textit{torsion-free class}) if it is closed under quotients (respectively, sub-modules) and extensions. A pair $(\mathcal{T},\mathcal{F})$ forms a \textit{torsion pair} if any of the following equivalent statements hold:
		\begin{itemize}
			\item $\mathcal{T}=\prescript{\perp}{}{\mathcal{F}}$ \footnote{$\prescript{\perp}{}{\mathcal{F}}:=\{X\in\mod\Lambda\mid\Hom_{\Lambda}(X,\mathcal{F})=0\}$}, and $\mathcal{F}=\mathcal{T}^{\perp}$ \footnote{$\mathcal{T}^{\perp}:=
				\{Y\in\mod\Lambda\mid\Hom_{\Lambda}(\mathcal{T},Y)=0\}$}.
			\item $\mathcal{T}$ is a torsion class, and $\mathcal{F}=\mathcal{T}^{\perp}$.
			\item $\mathcal{F}$ is a torsion-free class, and $\mathcal{T}=\prescript{\perp}{}{\mathcal{F}}$.
			\item $\Hom_{\Lambda}(\mathcal{T},\mathcal{F})=0$ and for each $X\in\mod\Lambda$, there exists an exact sequence of the form:
				\[\begin{tikzcd}[cramped]
					0 & T & X & F & {0,}
					\arrow[from=1-1, to=1-2]
					\arrow["a", from=1-2, to=1-3]
					\arrow["b", from=1-3, to=1-4]
					\arrow[from=1-4, to=1-5]
				\end{tikzcd}\]
			where $T\in\mathcal{T}$ and $F\in\mathcal{F}$.
		\end{itemize}
		
		In the above exact sequence, the morphism $a$ is a right $\mathcal{T}$-approximation, and $b$ is a left $\mathcal{F}$-approximation. Thus, torsion classes are contravariantly finite and torsion-free classes are covariantly finite.
		
		Let $X$ be a finite-dimensional module.
		Denote $\Fac(X)$ (respectively, $\Sub(X)$) as the smallest full subcategory of $\mod(\Lambda)$ that contains $X$, closed under taking quotients (respectively, closed under submodules)  and extensions.
		According to \cite[Proposition 1.1 and Theorem 2.7]{AIR14},
		for a torsion pair $(\mathcal{T},\mathcal{F})$ of $\mod\Lambda$, the following statements are equivalent:
		\begin{itemize}
			\item $\mathcal{T}$ is functorially finite.
			\item $\mathcal{F}$ is functorially finite.
			\item $\mathcal{T}=\Fac M$, for some $\tau$-rigid module $M$.
			\item $\mathcal{F}=\Sub(\tau M\oplus \nu P)$, for some $\tau$-tilting pair $(M,P)$.
		\end{itemize}
		Therefore, the map
		\[\begin{array}{rcl}
			\taurigidpair\Lambda &\longrightarrow &\ftorspair\Lambda, \\
			(M, P)& \longmapsto &(\Fac M, \Sub(\tau M\oplus \nu P))
		\end{array}\footnote{$\nu$ is the Nakayama functor.}\]
		provides a bijection between $\tau$-tilting pairs and functorially finite torsion pairs.
		
	In the following, we review the relationship among g-vector fan and the wall-chamber structure.  To do so, we recall some basic facts regarding stability conditions and the cones associated with $\tau$-rigid pairs. 
	
	\begin{namedthm}{Definition-Theorem}[{\cite[Subsection 3.1]{BKT14}}]
		For $\theta\in\mathbb{R}^n$, define
		\[\begin{array}{l}
			\overline{\mathcal{T}_{\theta}}:=\{M\in\mod\Lambda\mid \langle\theta,[M']\rangle \ge 0, \text{for all factor module $M'$ of $M$}\}, \\
			\overline{\mathcal{F}}_{\theta}:=\{M\in \mod\Lambda\mid\langle\theta,[M']\rangle \le 0, \text{for all sub-module $M'$ of $M$}\}, \\
			\mathcal{T}_{\theta}:=\{M\in \mod\Lambda\mid \langle\theta,[M']\rangle > 0, \text{for all non-zero factor module $M'$ of $M$}\}, \\
			\mathcal{F}_{\theta}:=\{M\in \mod\Lambda\mid\langle\theta,[M']\rangle < 0, \text{for all non-zero sub-module $M'$ of $M$}\},
		\end{array}\]
		\[\begin{array}{l l}
			\mathcal{W}_{\theta}:= \overline{\mathcal{T}}_{\theta}\bigcap \overline{\mathcal{F}}_{\theta}, & W_\theta:=\langle\{[X]\mid X\in\mathcal{W}_{\theta}\}\rangle_{\mathbb{R}}.
		\end{array}\]
		Then, the pairs $(\overline{\mathcal{T}}_{\theta}, \mathcal{F}_{\theta})$ and $(\mathcal{T}_{\theta}, \overline{\mathcal{F}}_{\theta})$ are torsion pairs which
		are called \textit{semistable torsion pairs}, and $\mathcal{W}_{\theta}$ is a wide subcategory which is said to be \textit{$\theta$-semistable subcategory}.
	\end{namedthm}
	
	The \textit{stability space} for $X\in\mod(\Lambda)$ is
	\[\mathfrak{D}(X):=\{\theta\in\mathbb{R}^n\mid X\in\mathcal{W}_{\theta}\}.\]
	Any stability space of codimension $1$ is called a \textit{wall}, and any connected component of the space
	\[\mathfrak{R}=\mathbb{R}^n\setminus\overline{\bigcup_{X\in\mod(\Lambda)}\mathfrak{D}(X)}\]
	is said to be \textit{chamber}.
	\begin{definition}\label{772608708051}
		Consider a subset $\mathcal{X}$ of $\mathbb{R}^{n}$. We say it is a \textit{convex cone}, if for all $x,y\in\mathcal{X}$ and $u,v\in\mathbb{R}^{\ge 0}$, $ux+vy$ belongs to $\mathcal{X}$. Moreover, $\mathcal{X}$ is called \textit{rational cone} (respectively, \textit{polyhedral cone}, \textit{simplicial cone}), if there exist rational vectors  (respectively, finitely many vectors, linearly independent vectors) $\{x_i\}_{i\in I}$ such that
		\[\mathcal{X}=\Cone\{x_i\mid i\in I\}:=\sum_{i\in I}\mathbb{R}^{\ge 0}x_i.\]
	\end{definition}
	An element of $K_0(\proj\Lambda)$ is said to be \textit{g-vector}.
	For a complex
	\[\mathcal{P}=\ldots\rightarrow P^{-1}\rightarrow P^0\rightarrow P^1\rightarrow\ldots,\]
	in $\in K^b(\proj\Lambda)$, the g-vector of $\mathcal{P}$ is defined as
	\[g^{\mathcal{P}}:=\sum_{i\in\mathbb{Z}}(-1)^i[P^i]\in K_0(\proj\Lambda)\cong\mathbb{Z}^n.\]
	Moreover, for the minimal projective presentation $a:P^{-1}\rightarrow P^0$ of a module $M$, the g-vector of $M$ is defined as
	\[g^M:=g^{\mathcal{P}_a}\in K_0(\proj\Lambda),\]
	where $\mathcal{P}_a$ is the corresponding $2$-term complex in $K^b(\proj\Lambda)$. In this case, the g-vector of the complex $\mathcal{P}_a$ is denoted by $g^a$.
	
	The \textit{g-vector fan} of $\Lambda$ is the union of open cones
	\[\mathfrak{C}^{\circ}_{(M,P)}:=\Cone^{\circ}\{g^{M_1},\ldots,g^{M_s},-g^{P_1},\ldots,-g^{P_r}\},\]
	where $(M,P)$ is a $\tau$-rigid pair, and $M=M_1\oplus\ldots\oplus M_s$ and $P=P_1\oplus\ldots\oplus P_r$ are the Krull-Schmidt decompositions of $M$ and $P$, respectively\footnote{Since there is an appropriate bijection between $\tau$-tilting pairs and $2$-term silting complexes, one can define the notion of g-vector fan by considering g-vectors of $2$-term silting complexes.}.
	
	The following result is a direct consequence of \cite[Proposition 3.19]{As21} and \cite[Theorem 2.27]{AiIy12}. It was used by Asai \cite{As21} to establish the correspondence between the cones associated with $\tau$-tilting pairs and chambers in the wall-chamber structure. Later in this paper, we apply this result to prove Corollary \ref{401237755092}.
	\begin{proposition}\label{129173072599}
		Let $g$ be a g-vector. Then, the following statements are equivalent.
		\begin{itemize}
			\item[$(1)$] There exists a $\tau$-tilting pair $(M,P)$ such that $g \in\mathfrak{C}^{\circ}_{(M,P)}$.
			\item[$(2)$] $\mathcal{W}_g=\{0\}.$
			\item[$(3)$] $g$ lies in a chamber.
			\item[$(4)$] $g$ is not on any wall.
		\end{itemize}
		In this case, $g$ is (tame and) contained in $\langle g^{M_i},-g^{P_j}\mid 1\le i\le s, 1\le j\le r \rangle_{\mathbb{N}}$, where $M=M_1\oplus M_2\oplus\ldots\oplus M_s$ and $P=P_1\oplus P_2\oplus\ldots\oplus P_r$ are Krull-Schmidt decompositions.
	\end{proposition}
	\begin{example}
		Consider the following quiver $Q$.
		\[\begin{tikzcd}[cramped,sep=scriptsize,scale=0.4]
			&& 2 && \\
			& 1 && 3
			\arrow["\beta", from=1-3, to=2-4]
			\arrow["\alpha", from=2-2, to=1-3]
			\arrow["\gamma", from=2-4, to=2-2]
		\end{tikzcd}\]
		In Figure \ref{Q3SkQNPExWLC}, we see three chambers and seven walls of the wall-chamber structure of the algebra $kQ/\langle (\alpha\gamma\beta)^{3}\rangle$ where
		\begin{tasks}[style=itemize](3)
			\task {\tiny The blue cone:} $\scalemath{0.8}{\mathfrak{C}^{\circ}_{(\repr{2\\3\\1}\oplus \repr{2\\3}\oplus \repr{2},\repr{0})}}$
			\task {\tiny The red cone:}
			$\scalemath{0.8}{\mathfrak{C}^{\circ}_{(\repr{1\\2\\3}\oplus\repr{2\\3\\1}\oplus\repr{2},\repr{0})}}$
			\task {\tiny The green cone:}
			$\scalemath{0.8}{\mathfrak{C}^{\circ}_{(\repr{2\\3}\oplus \repr{2},\repr{1\\2\\3})}}$.
		\end{tasks}
		By \cite[Example 3.30]{BST19}, this finite-dimensional algebra has only $20$ $\tau$-tilting pairs. Therefore, by \cite[Theorem 4.7]{As21}, its g-vector fan covers the whole of $\mathbb{R}^3$.
		\begin{figure}[htp]
			\[\begin{tikzpicture}[scale=0.6]
			\coordinate (0) at (0,0);
			\coordinate (1) at (0,2);
			\coordinate (2) at (-2,0);
			\coordinate (3) at (-2,2);
			\coordinate (4) at (-0.5,1.2);
			\coordinate (7) at (2,0);
			\coordinate (7') at (8,0);
			\coordinate (2')  at (-8,0);
			\coordinate (3')  at (-6,6);
			\coordinate (1')  at (0,8);
			\coordinate (4')  at (-2.5,6);
			\coordinate (5) at (-8,4);
			\coordinate (6) at (-4,7);
			\coordinate (8) at (5,6);
			\draw (0) -- (7');
			\draw (1') -- (7');
			\draw[dotted] (4') -- (7');
			\draw (0) -- (1');
			\draw (1') -- (3');
			\draw[dotted] (1') -- (4');
			\draw (2') -- (3');
			\draw (0) -- (2');
			\draw (0) -- (3');
			\draw (2') -- (3');
			\draw[dotted] (3') -- (4');
			\draw[dotted] (0) -- (4');
			\draw[dotted] (2') -- (4');
			\filldraw[draw=black,color=folly!50, opacity=0.40]
			(0) -- (1') -- (4');
			\filldraw[draw=black,color=royalblue(web)!100, opacity=0.40]
			(0) -- (1') -- (3');
			\filldraw[draw=black,color=emerald!100, opacity=0.40]
			(0) -- (3') -- (4');
			\filldraw[draw=black,color=emerald!100, opacity=0.20]
			(0) -- (2') -- (3');
			\filldraw[draw=black,color=emerald!100, opacity=0.40]
			(0) -- (2') -- (4');
			\filldraw[draw=black,color=folly!50, opacity=0.40]
			(0) -- (7') -- (4');
			\filldraw[draw=black,color=folly!50, opacity=0.20]
			(0) -- (7') -- (1');
			\node at (0) [below] {$\scalemath{0.5}{(0,0,0)}$};
			\node at (3) [left] {$\scalemath{0.5}{(-1,1,0)}$};
			\node at (2) [above] {$\scalemath{0.5}{(-1,0,0)}$};
			\node at (4) [above] {$\scalemath{0.5}{(0,1,-1)}$};
			\node at (1) [right] {$\scalemath{0.5}{(0,1,0)}$};
			\node at (5) [above] {};
			\node at (6) [above] {};
			\node at (7) [above] {$\scalemath{0.5}{(1,0,0)}$};
			\node at (8) [below] {};
		\end{tikzpicture}\]
		\caption{Three chambers and seven walls of the wall-chamber structure of the algebra $kQ/\langle (\alpha\gamma\beta)^{3}\rangle$}\label{Q3SkQNPExWLC}
		\end{figure}
	\end{example}
	In this paper, we focus on points outside the g-vector fan. Based on the last proposition, we can assert that if a given g-vector does not belong to the g-vector fan, then the associated wide subcategories do not vanish. Therefore, following \cite[Proposition 2.8]{As21}, we can assert that each g-vector outside the g-vector fan lies on a wall. 
	Moreover, the following result reveals that the semistable torsion pairs associated with these g-vectors are no longer functorially finite.
	\begin{proposition}
		Let $\theta\in\mathbb{R}^n$.
		Then, $\theta$ belongs to $\mathfrak{C}^{\circ}_{(M,P)}$,
		for some $\tau$-rigid pair $(M,P)$, if and only if
		\[\begin{array}{rl}
			\scalemath{0.95}{(\mathcal{T}_{\theta}, \overline{\mathcal{F}}_{\theta})=(\Fac M, M^{\perp})}, &\scalemath{0.95}{(\overline{\mathcal{T}}_{\theta}, \mathcal{F}_{\theta})=(\prescript{\perp}{}{\tau M}\cap P^{\perp},\Sub(\tau M\oplus\nu P))}.
		\end{array}\]
		Therefore, the torsion class $\overline{\mathcal{T}}_{\theta}$ is functorially finite if and only if $\theta$ belongs to the $\g$-vector fan of $\Lambda$. 
		In this case, $\mathcal{W}_{\theta}=\prescript{\perp}{}{\tau M}\cap P^{\perp}\cap M^{\perp}$.
	\end{proposition}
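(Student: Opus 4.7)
The plan is to first extract the Auslander--Reiten-type formulas that make g-vectors ``detect'' Hom-vanishing. Namely, for any $\tau$-rigid pair $(M,P)$ and any $N\in\mod\Lambda$,
\[\langle g^{M},[N]\rangle=\dim_{k}\Hom_{\Lambda}(M,N)-\dim_{k}\Hom_{\Lambda}(N,\tau M),\qquad \langle -g^{P},[N]\rangle=-\dim_{k}\Hom_{\Lambda}(P,N),\]
which follow by applying $\Hom_{\Lambda}(-,N)$ to a minimal projective presentation and tracking dimensions. These identities are the bridge between positivity of $\langle\theta,-\rangle$ and the Hom/$\tau$-Hom conditions defining $(\Fac M,M^{\perp})$ and $(\prescript{\perp}{}{\tau M}\cap P^{\perp},\Sub(\tau M\oplus\nu P))$.

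For the forward implication, I would write $\theta=\sum_{i}a_{i}g^{M_{i}}-\sum_{j}b_{j}g^{P_{j}}$ with $a_{i},b_{j}>0$. Any nonzero factor $N'$ of any $N\in\Fac M$ is itself in $\Fac M$, so by $\tau$-rigidity $\Hom(N',\tau M)=0$ and $\Hom(P,N')=0$; the displayed formulas then give $\langle\theta,[N']\rangle=\sum_{i}a_{i}\dim_{k}\Hom(M_{i},N')>0$, since some $M_{i}$ surjects onto a summand of $N'$. This shows $\Fac M\subseteq\mathcal{T}_{\theta}$; the analogous Hom-vanishing for modules in $\prescript{\perp}{}{\tau M}\cap P^{\perp}$ gives $\prescript{\perp}{}{\tau M}\cap P^{\perp}\subseteq\overline{\mathcal{T}}_{\theta}$, and dually for $M^{\perp}\subseteq\overline{\mathcal{F}}_{\theta}$ and $\Sub(\tau M\oplus\nu P)\subseteq\mathcal{F}_{\theta}$. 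Since both sides of each inclusion are halves of torsion pairs (by the Remark and the Definition--Theorem), the inclusions on one side force equality on the other.

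For the backward implication, I would exploit that $M_{i}\in\Fac M=\mathcal{T}_{\theta}$ forces $\langle\theta,[M_{i}]\rangle>0$ only on nonzero factors, while $M_{i}\in\overline{\mathcal{T}}_{\theta}\cap\overline{\mathcal{F}}_{\theta}=\mathcal{W}_{\theta}$ (obtained by intersecting the two characterizations) gives $\langle\theta,[M_{i}]\rangle=0$. Combining with the identity above and the $\tau$-rigid vanishings, $\theta$ pairs with each $[M_{i}]$ and each $[\tau M_{i}]$, $[\nu P_{j}]$ in a way that pins it down as a strictly positive combination of $\{g^{M_{i}}\}\cup\{-g^{P_{j}}\}$; linear independence of these vectors (a standard property of $\tau$-rigid pairs) then places $\theta$ in the open cone $\mathfrak{C}^{\circ}_{(M,P)}$. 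For the ``therefore'' clause, the Remark says $\overline{\mathcal{T}}_{\theta}$ is functorially finite iff it equals $\Fac M$ for some $\tau$-rigid $M$; together with the equivalence just established this is the same as $\theta$ belonging to the g-vector fan. The final description of $\mathcal{W}_{\theta}$ is immediate by intersecting the two displayed characterizations.

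The main obstacle I expect is the strictness needed for $\theta$ to lie in the \emph{open} cone rather than its closure: one must verify that every $a_{i}$ and $b_{j}$ in the expansion is strictly positive, which amounts to showing that if $\theta$ lay on a proper face of $\mathfrak{C}_{(M,P)}$, then some $[M_{i}]$ or $[\tau M_{i}]$ would detect a strict inequality incompatible with the assumed equalities of semistable torsion pairs. Handling this cleanly probably requires invoking the linear independence of the $g$-vectors of indecomposable summands of a $\tau$-rigid pair.
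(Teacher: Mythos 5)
Your forward implication is essentially the standard argument (it is how Yurikusa \cite{Yu18} and Asai \cite{As21} proceed, and note the paper itself does not reprove this proposition but simply cites \cite[Proposition 3.11]{As21} and \cite[Proposition 3.3]{Yu18}): the pairing identities $\langle g^{M},[N]\rangle=\dim\Hom(M,N)-\dim\Hom(N,\tau M)$ and $\langle [P],[N]\rangle=\dim\Hom(P,N)$, combined with the closure properties of $\Fac M$, $M^{\perp}$, $\prescript{\perp}{}{\tau M}\cap P^{\perp}$, $\Sub(\tau M\oplus\nu P)$ and the vanishings coming from $\tau$-rigidity of the pair, give the four inclusions, and the maximality of the two halves of a torsion pair upgrades them to equalities. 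That half is fine (modulo spelling out why a nonzero submodule of $(\tau M\oplus\nu P)^{k}$ has a nonzero $\Hom$ from $P$ or to $\tau M$, which you gloss over but which works).

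The backward implication, however, has a genuine gap, and the specific mechanism you propose is wrong. You assert $M_{i}\in\overline{\mathcal{T}}_{\theta}\cap\overline{\mathcal{F}}_{\theta}=\mathcal{W}_{\theta}$, hence $\langle\theta,[M_{i}]\rangle=0$; but under your hypothesis $\overline{\mathcal{F}}_{\theta}=M^{\perp}$ and $\Hom(M,M_{i})\neq 0$, so $M_{i}\notin\overline{\mathcal{F}}_{\theta}$, and indeed $M_{i}\in\mathcal{T}_{\theta}$ forces $\langle\theta,[M_{i}]\rangle>0$, contradicting the equality you want to use. More seriously, even after repairing this, pairing $\theta$ against $[M_{i}]$, $[\tau M_{i}]$, $[\nu P_{j}]$ cannot ``pin down'' $\theta$: for a $\tau$-rigid pair that is not $\tau$-tilting one has $|M|+|P|<|\Lambda|$, these classes do not span $K_{0}(\mod\Lambda)_{\mathbb{R}}$, and $\mathfrak{C}^{\circ}_{(M,P)}$ is a cone of dimension $|M|+|P|$; the real content of the converse is to rule out any component of $\theta$ transverse to $\langle g^{M_{1}},\dots,g^{M_{s}},-g^{P_{1}},\dots,-g^{P_{r}}\rangle_{\mathbb{R}}$. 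That is exactly where the cited proofs invoke nontrivial $\tau$-tilting machinery: $\langle\theta,-\rangle$ vanishes on the wide subcategory $\mathcal{W}_{\theta}=\prescript{\perp}{}{\tau M}\cap P^{\perp}\cap M^{\perp}$, which by Jasso's reduction is equivalent to a module category with $|\Lambda|-|M|-|P|$ simples, and one combines this dimension count with the fact that the g-vectors of a (completed) $\tau$-tilting pair form a basis of $K_{0}(\proj\Lambda)$ to force $\theta$ into the span with strictly positive coefficients. None of this appears in your sketch, and ``linear independence of the g-vectors'' alone does not substitute for it. The same applies to your ``therefore'' clause: functorial finiteness of $\overline{\mathcal{T}}_{\theta}$ gives $\overline{\mathcal{T}}_{\theta}=\Fac M'$ for some $\tau$-rigid $M'$, but producing a $\tau$-rigid pair whose two associated torsion pairs are precisely $(\mathcal{T}_{\theta},\overline{\mathcal{F}}_{\theta})$ and $(\overline{\mathcal{T}}_{\theta},\mathcal{F}_{\theta})$ needs an additional argument; for all of this you should either cite \cite[Proposition 3.11]{As21} as the paper does, or reproduce that proof.
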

	\begin{proof}
		Refer to \cite[Proposition 3.11]{As21} or \cite[Proposition 3.3]{Yu18}.
	\end{proof}
	
	Torsion pairs associated with morphisms in $\proj\Lambda$ play a crucial role in the study of generic decompositions, semistable torsion pairs, and $\TF$-equivalence classes. In the following, we recall this notion and review some foundational results (see, for instance, Proposition \ref{341408431998} and Lemma \ref{076626606506}). Using these tools, we then provide a proof for our result in Theorem \ref{310913388628}.
	\begin{remark}
		For a module $X$, it is easy to check that $\mathsf{T}(X):=\prescript{\perp}{}{(X^{\perp})}$ is the smallest torsion class, and $\mathsf{F}(X):=(\prescript{\perp}{}{X})^{\perp}$ is the smallest torsion-free class containing $X$.
	\end{remark}
	
	\begin{definition}[{\cite[Definition 3.1]{AsIy24}}]
		Let $a$ be a morphism in $\proj\Lambda$. Then, the pairs $(\overline{\mathcal{T}}_{a}, \mathcal{F}_{a})$ and $(\mathcal{T}_{a}, \overline{\mathcal{F}}_{a})$ where
		\[\begin{array}{l l l l}
			\mathcal{T}_a:=\mathsf{T}(\Coker a), & \overline{\mathcal{T}}_a:=\prescript{\perp}{}{\Ker\nu a}, & \mathcal{F}_a:=\mathsf{F}(\Ker\nu a), & \overline{\mathcal{F}}_a:=(\Coker a)^{\perp}
		\end{array}\]
		are torsion pairs.
	\end{definition}
	\begin{proposition}[{\cite[Theorem 4.3]{AsIy24}}]\label{341408431998}
		Let $g$ be a g-vector. Then
		\[\begin{array}{c c c c}
			\overline{\mathcal{T}}_g=\bigcup\limits_{a\in A}\overline{\mathcal{T}}_a, & \overline{\mathcal{F}}_g=\bigcup\limits_{a\in A}\overline{\mathcal{F}}_a, & \mathcal{T}_{g}=\bigcap\limits_{a\in A}\mathcal{T}_a, & \mathcal{F}_g=\bigcap\limits_{a\in A}\mathcal{F}_a,
		\end{array}\]
		where $A$ is the set of every morphism $a\in\proj(\Lambda)$ with $g^a=tg$, for some $t\in\mathbb{N}$.
	\end{proposition}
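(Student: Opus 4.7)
The plan is to establish the four identities in parallel via the fundamental pairing formula: for any $a\in\Hom_{\Lambda}(tg)$ and any $X\in\mod\Lambda$,
\[\langle tg,[X]\rangle=\hom_{\Lambda}(\Coker a,X)-\hom_{\Lambda}(X,\Ker\nu a).\]
Dividing by $t>0$ converts the numerical inequalities cutting out the four semistable torsion/torsion-free classes for $g$ into $\Hom$-vanishing conditions defining their counterparts for $a$. I will focus on the first identity $\overline{\mathcal{T}}_g=\bigcup_{a\in A}\overline{\mathcal{T}}_a$; the other three follow by interchanging factors with submodules and weak with strict inequalities, together with the symmetry of the pairing formula under the Nakayama duality that exchanges the roles of $\Coker a$ and $\Ker\nu a$.

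For "$\supseteq$", suppose $M\in\overline{\mathcal{T}}_a=\prescript{\perp}{}{\Ker\nu a}$. Any surjection $M\twoheadrightarrow M'$ induces an embedding $\Hom_{\Lambda}(M',\Ker\nu a)\hookrightarrow\Hom_{\Lambda}(M,\Ker\nu a)=0$, so the pairing formula reduces to $\langle tg,[M']\rangle=\hom_{\Lambda}(\Coker a,M')\ge 0$; dividing by $t$ places $M$ in $\overline{\mathcal{T}}_g$.

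For the harder inclusion "$\subseteq$", given $M\in\overline{\mathcal{T}}_g$ I would produce $t\in\mathbb{N}$ and $a\in\Hom_{\Lambda}(tg)$ with $\Hom_{\Lambda}(M,\Ker\nu a)=0$ by taking $a$ sufficiently generic for $t$ large. The argument is by contradiction: if $\Hom_{\Lambda}(M,\Ker\nu a)\ne 0$ for every $a\in A$, pick a nonzero $f:M\to\Ker\nu a$ and set $M':=\Im f$, a nonzero quotient of $M$ that embeds in $\Ker\nu a$. The embedding forces $\hom_{\Lambda}(M',\Ker\nu a)\ge 1$, so the pairing formula constrains $\langle tg,[M']\rangle\le\hom_{\Lambda}(\Coker a,M')-1$; invoking upper semicontinuity of $\Hom$-dimensions over $\Hom_{\Lambda}(tg)$ to simultaneously control $\hom_{\Lambda}(\Coker a,M'')$ across the (finitely many up to isomorphism) quotients $M''$ of $M$, one pushes $\langle g,[M']\rangle<0$ for some quotient $M'$, contradicting $M\in\overline{\mathcal{T}}_g$.

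The main obstacle is making this genericity/uniform-control step airtight: one must exhibit a single $a$ \emph{actually} annihilating $\Hom_{\Lambda}(M,\Ker\nu a)$, not merely bounding its dimension, and the candidate quotient $M':=\Im f$ depends on $a$, which complicates the "simultaneous" control across quotients of $M$. Once the first identity is settled, the identity $\overline{\mathcal{F}}_g=\bigcup_{a}\overline{\mathcal{F}}_a$ follows by the dual argument in $\mod\Lambda^{\op}$ (interchanging submodules with factors), and the strict identities $\mathcal{T}_g=\bigcap_{a}\mathcal{T}_a$ and $\mathcal{F}_g=\bigcap_{a}\mathcal{F}_a$ follow by a parallel contradiction argument using strict inequalities together with the characterizations $\mathcal{T}_a=\mathsf{T}(\Coker a)=\prescript{\perp}{}{\overline{\mathcal{F}}_a}$ and $\mathcal{F}_a=\mathsf{F}(\Ker\nu a)$.
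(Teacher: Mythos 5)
First, note that the paper does not prove this statement at all: it is imported verbatim as \cite[Theorem 4.3]{AsIy24}, so there is no internal proof to match your argument against; the question is whether your blind attempt actually establishes the result, and it does not. Your pairing formula $\langle tg,[X]\rangle=\hom_{\Lambda}(\Coker a,X)-\hom_{\Lambda}(X,\Ker\nu a)$ is correct, and with it the inclusion $\bigcup_{a\in A}\overline{\mathcal{T}}_a\subseteq\overline{\mathcal{T}}_g$ (and its three companions) is fine; also, once the two closure identities are known, the strict ones follow formally, since $(\mathcal{T}_g,\overline{\mathcal{F}}_g)$ and $(\mathcal{T}_a,\overline{\mathcal{F}}_a)$ are torsion pairs, so $\mathcal{T}_g={}^{\perp}\overline{\mathcal{F}}_g={}^{\perp}\bigl(\bigcup_a\overline{\mathcal{F}}_a\bigr)=\bigcap_a{}^{\perp}\overline{\mathcal{F}}_a=\bigcap_a\mathcal{T}_a$, with no separate contradiction argument needed.

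The genuine gap is the inclusion $\overline{\mathcal{T}}_g\subseteq\bigcup_{a\in A}\overline{\mathcal{T}}_a$, i.e.\ producing, for a given $M$ with $\langle g,[M']\rangle\ge 0$ for all quotients $M'$, a single $t$ and $a\in\Hom_{\Lambda}(tg)$ with $\Hom_{\Lambda}(M,\Ker\nu a)=0$ (equivalently, with $\Hom_{\Lambda}(a,M)$ surjective). Your contradiction scheme does not close: the inequality $\langle tg,[M']\rangle\le\hom_{\Lambda}(\Coker a,M')-1$ is perfectly compatible with $\langle g,[M']\rangle\ge 0$, because $\hom_{\Lambda}(\Coker a,M')$ typically grows linearly in $t$, so no amount of semicontinuity forces $\langle g,[M']\rangle<0$; moreover the auxiliary quotient $M'=\Im f$ depends on the chosen $a$, and your uniformization step rests on the claim that $M$ has only finitely many quotients up to isomorphism, which is false in general (already over the Kronecker algebra a projective indecomposable has a one-parameter family of pairwise non-isomorphic quotients); only the classes $[M']$ in $K_0(\mod\Lambda)$ form a finite set, which is not enough for your argument as written. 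This missing step is precisely the substance of Asai--Iyama's theorem: their proof runs through the theory of general presentations (semicontinuity of $\hom(\Coker a,-)$ and $\hom(-,\Ker\nu a)$ in $a$, the $E$-invariant and generic decomposition à la Derksen--Fei, and the behaviour of $\overline{\mathcal{T}}_a$ for generic $a\in\Hom_{\Lambda}(tg)$ as $t$ grows), none of which appears in your proposal. You honestly flag this obstacle yourself, but as it stands the key direction is asserted, not proved, so the attempt does not constitute a proof of the proposition.
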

	\section{\texorpdfstring{$\TF$-equivalence classes}{TF-equivalence classes}}
	First, we review the notion of $\TF$-equivalence relation on $K_0(\proj\Lambda)_{\mathbb{R}}$. Then, basic results concerning $\TF$-equivalence classes are studied. Subsequently, we study the relationship between the boundary and interior of the $\TF$-equivalence class of a given vector, which allows us to prove Proposition \ref{206559753624}.
	\begin{definition}[{\cite[Definition 2.13]{As21}}]
		Consider $\theta, \eta\in K_0(\proj\Lambda)_{\mathbb{R}}$. We say $\theta$ and $\eta$ are $\TF$-\textit{equivalent} provided that $\overline{\mathcal{T}}_{\theta}=\overline{\mathcal{T}}_{\eta}$ and $\overline{\mathcal{F}}_{\theta}=\overline{\mathcal{F}}_{\eta}$. The $\TF$-equivalence class of $\theta$ is denoted by $[\theta]_{\TF}$.
	\end{definition}
	\begin{lemma}[{\cite[Lemma 2.16]{As21}}]\label{532756467772}
		Let $\theta$ and $\eta$ be in $K_0(\proj\Lambda)_{\mathbb{R}}$. Then, $\eta$ belongs to $\overline{[\theta]_{\TF}}$ if and only if $\overline{\mathcal{T}}_{\theta}\subseteq\overline{\mathcal{T}}_{\eta}$ and $\overline{\mathcal{F}}_{\theta}\subseteq\overline{\mathcal{F}}_{\eta}$.
	\end{lemma}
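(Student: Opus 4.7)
The plan is to establish each direction separately: the forward direction is a straightforward continuity argument, while the reverse direction is handled by constructing an explicit linear interpolation between $\eta$ and $\theta$ and showing that every intermediate point is $\TF$-equivalent to $\theta$.

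For $(\Rightarrow)$, I would pick a sequence $\theta_m \to \eta$ with $\theta_m \in [\theta]_{\TF}$. Given any $M \in \overline{\mathcal{T}}_\theta = \overline{\mathcal{T}}_{\theta_m}$ and any factor module $M'$ of $M$, the inequalities $\langle \theta_m, [M']\rangle \ge 0$ pass to the limit as $m \to \infty$ to give $\langle \eta, [M']\rangle \ge 0$, so $M \in \overline{\mathcal{T}}_\eta$. The inclusion $\overline{\mathcal{F}}_\theta \subseteq \overline{\mathcal{F}}_\eta$ is completely dual, using sub-modules in place of factor modules.

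For $(\Leftarrow)$, assume $\overline{\mathcal{T}}_\theta \subseteq \overline{\mathcal{T}}_\eta$ and $\overline{\mathcal{F}}_\theta \subseteq \overline{\mathcal{F}}_\eta$, and consider the segment $\theta_s := (1-s)\eta + s\theta$ for $s \in (0,1]$. Since $\theta_s \to \eta$ as $s \to 0^+$, it is enough to show $\theta_s \in [\theta]_{\TF}$ for every such $s$. One containment, $\overline{\mathcal{T}}_\theta \subseteq \overline{\mathcal{T}}_{\theta_s}$, is immediate: if $M \in \overline{\mathcal{T}}_\theta$, then by hypothesis also $M \in \overline{\mathcal{T}}_\eta$, and a convex combination of two non-negative pairings on each factor of $M$ stays non-negative. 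The reverse containment $\overline{\mathcal{T}}_{\theta_s} \subseteq \overline{\mathcal{T}}_\theta$ is the substantive step and is where I anticipate the main difficulty. My plan is to invoke the canonical torsion sequence $0 \to T \to M \to F \to 0$ associated to the torsion pair $(\overline{\mathcal{T}}_\theta, \mathcal{F}_\theta)$: if $M \in \overline{\mathcal{T}}_{\theta_s}$ but $M \notin \overline{\mathcal{T}}_\theta$, then $F$ is non-zero, so membership in $\mathcal{F}_\theta$ produces the \emph{strict} inequality $\langle \theta, [F]\rangle < 0$, while the chain of inclusions $\mathcal{F}_\theta \subseteq \overline{\mathcal{F}}_\theta \subseteq \overline{\mathcal{F}}_\eta$ gives $\langle \eta, [F]\rangle \le 0$ (taking $F$ as a sub-module of itself). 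For $s > 0$ these combine to $\langle \theta_s, [F]\rangle < 0$, contradicting the fact that $F$, being a factor of $M \in \overline{\mathcal{T}}_{\theta_s}$, must pair non-negatively with $\theta_s$.

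The equality $\overline{\mathcal{F}}_{\theta_s} = \overline{\mathcal{F}}_\theta$ is then proved by the symmetric argument, using the other semistable torsion pair $(\mathcal{T}_\theta, \overline{\mathcal{F}}_\theta)$ and a canonical sub-module of $M$ playing the role of $F$. The crux of the argument, and what I expect to be the main obstacle, is recognising that the \emph{strict} pairing available from $\mathcal{F}_\theta$ (as opposed to the weak one from $\overline{\mathcal{F}}_\theta$) provides exactly the slack needed to overcome the non-strict pairing with $\eta$ inherited from the hypothesis; once both semistable torsion pairs at $\theta$ are brought to bear, the interpolation $\theta_s = (1-s)\eta + s\theta$ is the natural path that makes the estimate close.
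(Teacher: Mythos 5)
Your proof is correct, and since the paper itself gives no proof of this lemma (it is imported verbatim from \cite[Lemma 2.16]{As21}), the comparison is with Asai's argument, which yours essentially reproduces: the forward direction is the same limiting argument, and the converse is the same interpolation $\theta_s=(1-s)\eta+s\theta$, where $\overline{\mathcal{T}}_\theta\subseteq\overline{\mathcal{T}}_{\theta_s}$ and $\overline{\mathcal{F}}_\theta\subseteq\overline{\mathcal{F}}_{\theta_s}$ follow from convexity of the weak inequalities, and the reverse inclusions follow from the canonical exact sequences of the semistable torsion pairs $(\overline{\mathcal{T}}_\theta,\mathcal{F}_\theta)$ and $(\mathcal{T}_\theta,\overline{\mathcal{F}}_\theta)$, with the strict inequality on the torsion-free (resp.\ torsion) part providing the required slack for $s>0$. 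I see no gap.
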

	\begin{proposition}[{\cite[Theorem 2.17]{As21}}]\label{987463920193}
		For $\theta, \eta\in K_0(\proj\Lambda)_{\mathbb{R}}$, the following statements are equivalent.
		\begin{itemize}
			\item[$(1)$] $[\theta]_{\TF}=[\eta]_{\TF}$.
			\item[$(2)$] $\{r\theta+(1-r)\eta\mid r\in[0,1]\} =:[\theta,\eta]\subseteq[\theta]_{\TF}$.
			\item[$(3)$] $\forall\gamma\in[\theta,\eta],  \mathcal{W}_{\eta}=\mathcal{W}_{\theta}.$
		\end{itemize}
	\end{proposition}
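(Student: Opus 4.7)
The plan is to verify (2) $\Rightarrow$ (1), (2) $\Rightarrow$ (3), (1) $\Rightarrow$ (2), and (3) $\Rightarrow$ (1), which jointly yield all claimed equivalences. I read condition (3) as ``$\mathcal{W}_\gamma = \mathcal{W}_\theta$ for all $\gamma \in [\theta,\eta]$'' (the printed form contains an apparent typo). Write $\gamma_r := r\theta + (1-r)\eta$. Then (2) $\Rightarrow$ (1) is immediate from $\eta = \gamma_0 \in [\theta]_{\TF}$, and (2) $\Rightarrow$ (3) follows from $\mathcal{W}_{\gamma_r} = \overline{\mathcal{T}}_{\gamma_r} \cap \overline{\mathcal{F}}_{\gamma_r} = \overline{\mathcal{T}}_\theta \cap \overline{\mathcal{F}}_\theta = \mathcal{W}_\theta$ whenever the segment lies in $[\theta]_{\TF}$.

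For (1) $\Rightarrow$ (2), the forward inclusions $\overline{\mathcal{T}}_\theta \subseteq \overline{\mathcal{T}}_{\gamma_r}$ and $\overline{\mathcal{F}}_\theta \subseteq \overline{\mathcal{F}}_{\gamma_r}$ follow immediately from the convex-combination identity $\langle \gamma_r, [M'] \rangle = r \langle \theta, [M'] \rangle + (1-r) \langle \eta, [M'] \rangle$. For the reverse inclusion of closed torsion classes, exploit that $\overline{\mathcal{T}}_\theta = \overline{\mathcal{T}}_\eta$ automatically yields $\mathcal{F}_\theta = \overline{\mathcal{T}}_\theta^{\perp} = \overline{\mathcal{T}}_\eta^{\perp} = \mathcal{F}_\eta$. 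Given $M \in \overline{\mathcal{T}}_{\gamma_r}$, form $0 \to T \to M \to F \to 0$ via the torsion pair $(\overline{\mathcal{T}}_\theta, \mathcal{F}_\theta)$. If $F \neq 0$, then $F \in \mathcal{F}_\theta = \mathcal{F}_\eta$ forces $\langle \theta, [F] \rangle < 0$ and $\langle \eta, [F] \rangle < 0$, so $\langle \gamma_r, [F] \rangle < 0$; but $F$ is a factor of $M$, contradicting $M \in \overline{\mathcal{T}}_{\gamma_r}$. Hence $F = 0$ and $M \in \overline{\mathcal{T}}_\theta$. A dual argument with the torsion pair $(\mathcal{T}_\theta, \overline{\mathcal{F}}_\theta)$, using $\mathcal{T}_\theta = {}^{\perp}\overline{\mathcal{F}}_\theta = {}^{\perp}\overline{\mathcal{F}}_\eta = \mathcal{T}_\eta$, handles $\overline{\mathcal{F}}_{\gamma_r} = \overline{\mathcal{F}}_\theta$.

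For (3) $\Rightarrow$ (1), suppose $\overline{\mathcal{T}}_\theta \neq \overline{\mathcal{T}}_\eta$ and choose $M$ of minimal dimension with (WLOG) $M \in \overline{\mathcal{T}}_\theta \setminus \overline{\mathcal{T}}_\eta$. Then $\langle \theta, [M] \rangle \geq 0$, $\langle \eta, [M] \rangle < 0$, and minimality forces every proper factor of $M$ into $\overline{\mathcal{T}}_\theta \cap \overline{\mathcal{T}}_\eta$ (else it would be a smaller counterexample). The affine function $r \mapsto \langle \gamma_r, [M] \rangle$ vanishes at some $r^* \in (0, 1]$. Then every proper factor $N$ of $M$ satisfies $\langle \gamma_{r^*}, [N] \rangle \geq 0$ (since both $\langle \theta, [N]\rangle$ and $\langle \eta, [N]\rangle$ are $\geq 0$), and the identity $\langle \gamma_{r^*}, [N'] \rangle = -\langle \gamma_{r^*}, [M/N'] \rangle$ for submodules $N' \subseteq M$, together with $M/N'$ being a factor, forces $\langle \gamma_{r^*}, [N'] \rangle \leq 0$. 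Thus $M \in \mathcal{W}_{\gamma_{r^*}}$, and (3) gives $M \in \mathcal{W}_{\gamma_{r^*}} = \mathcal{W}_\eta \subseteq \overline{\mathcal{T}}_\eta$, contradicting $M \notin \overline{\mathcal{T}}_\eta$. The dual argument, with ``factor'' and ``submodule'' interchanged, establishes $\overline{\mathcal{F}}_\theta = \overline{\mathcal{F}}_\eta$.

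The main obstacle is the bookkeeping required in the third paragraph: certifying that the minimal counterexample $M$ actually lies in $\mathcal{W}_{\gamma_{r^*}}$ requires controlling factors and submodules simultaneously, with the submodule side relying on the complementarity $\langle \gamma_{r^*}, [N'] \rangle + \langle \gamma_{r^*}, [M/N'] \rangle = \langle \gamma_{r^*}, [M] \rangle = 0$. A secondary subtlety is the boundary case $r^* = 1$ (when $\langle \theta, [M] \rangle = 0$), which still yields the contradiction as long as (3) is applied at $r = 0$ to give $\mathcal{W}_\theta = \mathcal{W}_\eta$. Beyond these points, the argument reduces to straightforward manipulations of torsion pairs, which is why the proposition is available as a foundational tool for later sections of the paper.
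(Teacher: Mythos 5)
Your argument is correct, but note that the paper does not prove this proposition at all: it is imported verbatim from Asai (the proof in the text is just the citation to \cite[Theorem 2.17]{As21}), so there is no internal proof to measure you against. What you have written is a valid self-contained reconstruction along the expected lines, and it matches the standard argument in spirit: the implications $(2)\Rightarrow(1)$ and $(2)\Rightarrow(3)$ are the trivial ones; for $(1)\Rightarrow(2)$ you correctly combine linearity of $\gamma\mapsto\langle\gamma,[M']\rangle$ (giving $\overline{\mathcal{T}}_{\theta}\cap\overline{\mathcal{T}}_{\eta}\subseteq\overline{\mathcal{T}}_{\gamma_r}$) with the approximation sequences for the torsion pairs $(\overline{\mathcal{T}}_{\theta},\mathcal{F}_{\theta})$ and $(\mathcal{T}_{\theta},\overline{\mathcal{F}}_{\theta})$, and the identification $\mathcal{F}_{\theta}=\overline{\mathcal{T}}_{\theta}^{\perp}=\overline{\mathcal{T}}_{\eta}^{\perp}=\mathcal{F}_{\eta}$ is exactly what makes the strict inequalities on $F$ (resp.\ $T$) available for every $r\in[0,1]$; for $(3)\Rightarrow(1)$ your minimal-counterexample argument is sound: minimality does force $\langle\eta,[M]\rangle<0$ (the offending factor cannot be proper), the intermediate value $r^{*}$ with $\langle\gamma_{r^{*}},[M]\rangle=0$ exists, proper factors are nonnegative under both $\theta$ and $\eta$ hence under $\gamma_{r^{*}}$, and the complementarity $\langle\gamma_{r^{*}},[N']\rangle=-\langle\gamma_{r^{*}},[M/N']\rangle$ puts $M$ in $\mathcal{W}_{\gamma_{r^{*}}}=\mathcal{W}_{\theta}=\mathcal{W}_{\eta}\subseteq\overline{\mathcal{T}}_{\eta}$, a contradiction, with the case $r^{*}=1$ and the dual statement for $\overline{\mathcal{F}}$ handled as you indicate. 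Your reading of $(3)$ as ``$\mathcal{W}_{\gamma}=\mathcal{W}_{\theta}$ for all $\gamma\in[\theta,\eta]$'' is also the intended one.
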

	\begin{lemma}[{\cite[Proposition 2.9]{AsIy24}}]\label{284590845114}
		Assume that $\theta,\eta\in K_0(\proj\Lambda)_{\mathbb{R}}$ are $\TF$-equivalent. Then, for $M\in\mathcal{T}_{\theta}$, there exists $t\in\mathbb{N}$ such that $M\in\mathcal{T}_{t\theta-\eta}$. Additionally, the similar assertions hold for
		$\overline{\mathcal{T}}_{\theta}$,
		$\overline{\mathcal{F}}_{\theta}$,
		$\mathcal{F}_{\theta}$
		and
		$\mathcal{W}_{\theta}$.
	\end{lemma}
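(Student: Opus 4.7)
The plan is to exploit the elementary finiteness principle that, for a fixed module $M$, only finitely many dimension vectors $[M']\in\mathbb{Z}^{n}_{\ge 0}$ arise as $M'$ ranges over factor modules (respectively submodules) of $M$, since each entry of $[M']$ is bounded by the corresponding entry of $[M]$. Each of the five assertions in the lemma thus reduces to a finite collection of linear inequalities in $t$, which I will satisfy by choosing $t$ large enough.

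For the open cases $\mathcal{T}_{\theta}$ and $\mathcal{F}_{\theta}$ the argument is direct and, in fact, does not use the TF-equivalence hypothesis. Letting $D$ be the finite set of dimension vectors of non-zero factor modules of $M\in\mathcal{T}_{\theta}$, the assumption forces $\langle\theta,d\rangle>0$ for every $d\in D$, so any integer $t>\max_{d\in D}\langle\eta,d\rangle/\langle\theta,d\rangle$ yields $\langle t\theta-\eta,d\rangle>0$, giving $M\in\mathcal{T}_{t\theta-\eta}$; the $\mathcal{F}_{\theta}$ case is dual with submodules in place of factors.

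For the closed case $\overline{\mathcal{T}}_{\theta}$ I would partition the analogous finite set $D$ as $D^{+}\sqcup D^{0}$ according to whether $\langle\theta,d\rangle>0$ or $=0$, handle $D^{+}$ as above, and dispose of $D^{0}$ via the key claim that every factor $M'$ of $M$ with $\langle\theta,[M']\rangle=0$ lies in $\mathcal{W}_{\theta}$. To prove the claim, note that $M'\in\overline{\mathcal{T}}_{\theta}$ because $\overline{\mathcal{T}}_{\theta}$ is closed under quotients, and for any submodule $N\subseteq M'$ the additivity $\langle\theta,[M']\rangle=\langle\theta,[N]\rangle+\langle\theta,[M'/N]\rangle=0$ together with $\langle\theta,[M'/N]\rangle\ge 0$ (since $M'/N$ is still a factor of $M$) forces $\langle\theta,[N]\rangle\le 0$, so $M'\in\overline{\mathcal{F}}_{\theta}$ and hence $M'\in\mathcal{W}_{\theta}$. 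By Proposition \ref{987463920193}, the TF-equivalence yields $\mathcal{W}_{\theta}=\mathcal{W}_{\eta}$, so $\langle\eta,[M']\rangle=0$ (taking $M'$ as both factor and submodule of itself), and therefore $\langle t\theta-\eta,d\rangle=0$ on $D^{0}$ for every $t$. The case $\overline{\mathcal{F}}_{\theta}$ is dual, and for $\mathcal{W}_{\theta}=\overline{\mathcal{T}}_{\theta}\cap\overline{\mathcal{F}}_{\theta}$ one applies the two closed results and takes the maximum of the two witnesses, checking monotonicity in $t$ using that factors of $M\in\overline{\mathcal{T}}_{\theta}$ satisfy $\langle\theta,\cdot\rangle\ge 0$ while submodules of $M\in\overline{\mathcal{F}}_{\theta}$ satisfy $\langle\theta,\cdot\rangle\le 0$.

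The main obstacle is the boundary case $\langle\theta,[M']\rangle=0$ in the closed setting: it is not a priori clear that such an $M'$ must be $\theta$-semistable, and a naive uniform choice of $t$ would only control the $D^{+}$ part. The short exact sequence calculation above is the essential trick, and it is precisely this observation, coupled with the identity $\mathcal{W}_{\theta}=\mathcal{W}_{\eta}$ supplied by Proposition \ref{987463920193}, that makes the TF-equivalence hypothesis indispensable in the closed cases.
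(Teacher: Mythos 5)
Your argument is correct. Note first that the paper offers no proof of this lemma at all: it is imported verbatim as \cite[Proposition 2.9]{AsIy24}, so what you have done is reconstruct the content behind the citation. Your reconstruction is sound: the finiteness of the set of dimension vectors of factor modules (resp.\ submodules) of a fixed $M$ reduces each assertion to finitely many linear inequalities in $t$; the strict cases $\mathcal{T}_{\theta}$, $\mathcal{F}_{\theta}$ then follow for large $t$ without any hypothesis on $\eta$, and the only genuine issue is the boundary part of the closed cases, which you handle correctly with the standard semistability trick: a factor $M'$ of $M\in\overline{\mathcal{T}}_{\theta}$ with $\langle\theta,[M']\rangle=0$ lies in $\overline{\mathcal{T}}_{\theta}$ (torsion classes are quotient-closed) and in $\overline{\mathcal{F}}_{\theta}$ (by additivity of $\langle\theta,-\rangle$ on short exact sequences), hence in $\mathcal{W}_{\theta}$, and $\TF$-equivalence forces $\langle\eta,[M']\rangle=0$, so these terms are unaffected by $t$; the passage from $\overline{\mathcal{T}}$ and $\overline{\mathcal{F}}$ to $\mathcal{W}$ via the maximum of the two witnesses is justified by exactly the monotonicity you state. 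Two cosmetic remarks: you do not need Proposition \ref{987463920193} to get $\mathcal{W}_{\theta}=\mathcal{W}_{\eta}$, since this is immediate from $\overline{\mathcal{T}}_{\theta}=\overline{\mathcal{T}}_{\eta}$ and $\overline{\mathcal{F}}_{\theta}=\overline{\mathcal{F}}_{\eta}$; and in the $\overline{\mathcal{T}}_{\theta}$ case you could skip the detour through $\mathcal{W}_{\theta}$ altogether, since $M'\in\overline{\mathcal{F}}_{\theta}=\overline{\mathcal{F}}_{\eta}$ already gives $\langle\eta,[M']\rangle\le 0$, which is all the inequality $\langle t\theta-\eta,[M']\rangle\ge 0$ requires.
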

		Let $\mathcal{X}\subseteq K_0(\proj\Lambda)_{\mathbb{R}}$. The set consisting of all points $\eta\in\mathcal{X}$ that belong to some open subset of $\langle\mathcal{X}\rangle_{\mathbb{R}}$ is said to be the \textit{(relative) interior} of $\mathcal{X}$, and denoted by $\mathcal{X}^{\circ}$. Moreover, the \textit{boundary} of $\mathcal{X}$ is defined as $\partial \mathcal{X}:=\overline{\mathcal{X}}\setminus \mathcal{X}^{\circ}$.
		
	\begin{remark}
		For $0\neq\theta\in K_0(\proj\Lambda)_{\mathbb{R}}$, let $\{\theta_1,\ldots,\theta_s\}\subseteq [\theta]_{\TF}$ be a basis for $\langle [\theta]_{\TF} \rangle_{\mathbb{R}}$. Then, the convexity of $\TF$-equivalence classes implies that
		\[\Cone^{\circ}\{\theta_1,\ldots,\theta_s\}\subseteq [\theta]_{\TF}\]
		is an open subset of $\langle [\theta]_{\TF} \rangle_{\mathbb{R}}$. Therefore, the interior of any (non-zero) $\TF$-equivalence class is not empty.
	\end{remark}
	\begin{lemma}\label{434203923932}
		Let $\mathcal{X}\subseteq K_0(\proj\Lambda)_{\mathbb{R}}$ be a convex cone. If $\eta\in \mathcal{X}^{\circ}$ and $\gamma\in\overline{\mathcal{X}}$, then
		\[\{t\eta+(1-t)\gamma\mid t\in (0,1)\}=:(\eta,\gamma)\subseteq \mathcal{X}^{\circ}.\]
		Moreover, $t\eta-\gamma\in\mathcal{X}^{\circ}$, for sufficiently large $t\in\mathbb{N}$.
	\end{lemma}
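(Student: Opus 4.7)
The plan is to follow the classical line-segment principle for the relative interior of a convex set, carried out inside the ambient linear span $V:=\langle\mathcal{X}\rangle_{\mathbb{R}}$; the two assertions then follow by direct convexity bookkeeping, with the cone property doing the work in the second statement.

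For the first assertion, I would fix $t\in(0,1)$ and set $z:=t\eta+(1-t)\gamma$. Since $\eta\in\mathcal{X}^{\circ}$, one can choose $r>0$ such that $\eta+v\in\mathcal{X}$ whenever $v\in V$ with $\|v\|<r$. Given an arbitrary $w\in V$ with $\|w\|$ small, the idea is to approximate $\gamma$ by some $\gamma'\in\mathcal{X}$ with $\|\gamma-\gamma'\|$ also small (possible because $\gamma\in\overline{\mathcal{X}}$) and then to rewrite
\[
z+w \;=\; t\Bigl(\eta+\tfrac{w+(1-t)(\gamma-\gamma')}{t}\Bigr)+(1-t)\gamma'.
\]
Choosing the perturbations small enough that the bracketed correction has norm strictly less than $r$, the first factor lies in $\eta+(B_r\cap V)\subseteq\mathcal{X}$ and the second already lies in $\mathcal{X}$. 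Convexity of $\mathcal{X}$ then yields $z+w\in\mathcal{X}$, exhibiting an open neighbourhood of $z$ in $V$ sitting inside $\mathcal{X}$, and hence $z\in\mathcal{X}^{\circ}$.

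For the second assertion I would exploit that $\mathcal{X}$ is a cone, so positive scaling preserves $\mathcal{X}^{\circ}$: if $x\in\mathcal{X}^{\circ}$ and $s>0$, then $sx+(B_{sr}\cap V)=s(x+(B_r\cap V))\subseteq s\mathcal{X}=\mathcal{X}$, so $sx\in\mathcal{X}^{\circ}$. Writing $t\eta-\gamma=t(\eta-\gamma/t)$, as $t\to\infty$ the point $\eta-\gamma/t$ converges to $\eta$ inside $V$ (using $\gamma\in\overline{\mathcal{X}}\subseteq V$), and since $\mathcal{X}^{\circ}$ is open in $V$ and contains $\eta$, the approximants $\eta-\gamma/t$ eventually lie in $\mathcal{X}^{\circ}$; rescaling by $t$ then gives $t\eta-\gamma\in\mathcal{X}^{\circ}$ for sufficiently large $t\in\mathbb{N}$. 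The only delicate point throughout is the relative nature of the interior: every ball, approximation, and perturbation must remain inside $V$, which is automatic since $V$ is a finite-dimensional subspace of $K_0(\proj\Lambda)_{\mathbb{R}}$ containing $\overline{\mathcal{X}}$.
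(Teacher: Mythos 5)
Your proof is correct and follows essentially the same route as the paper: the classical line-segment principle, carried out with a relative open ball around $\eta$ in $\langle\mathcal{X}\rangle_{\mathbb{R}}$, an approximant $\gamma'\in\mathcal{X}$ near $\gamma$, and convexity to absorb the perturbation. You additionally write out the positive-scaling argument for the claim that $t\eta-\gamma\in\mathcal{X}^{\circ}$ for large $t$, which the paper leaves implicit, so this is a completion of the same approach rather than a different one.
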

	\begin{proof}
		Consider an open ball $\mathcal{B}\subseteq \mathcal{X}$ containing $\eta$. Since $\mathcal{X}$ is convex and $\gamma\in\overline{\mathcal{X}}$, for $\alpha\in (\gamma,\eta)$, there exists $\gamma'\in \mathcal{X}^{\circ}$ near $\gamma$ such that
		\[\alpha\in (\gamma',\mathcal{B}):=\{t\gamma'+(1-t)b\mid b\in\mathcal{B}, t\in (0,1)\}\subseteq \mathcal{X}.\]
		This subset is open in $\langle \mathcal{X}\rangle_{\mathbb{R}}$. Therefore, $\alpha\in\mathcal{X}^{\circ}$.
		
		Since $\mathcal{X}$ is a convex cone and $\gamma,\eta\in\langle \mathcal{X}\rangle_{\mathbb{R}}$, we conclude that $(-\gamma,\eta)\cap\overline{\mathcal{X}}\neq\emptyset$. Thus, the second assertion follows from the first one. 
	\end{proof}
	The following lemma later used to prove Corollary \ref{342694140028} and Lemma \ref{833777223956}, which are crucial tools for establishing our main results.
	\begin{lemma}\label{075288509646}
		Consider a convex cone $\mathcal{X}\subseteq K_0(\proj\Lambda)_{\mathbb{R}}$. If $\eta\in \mathcal{X}^{\circ}$ and $\gamma\in\partial \mathcal{X}$, then for all $t\in\mathbb{R}^{>0}$, we have
		\[(t\gamma-\eta,\gamma)\cap\overline{\mathcal{X}}=\emptyset.\]
	\end{lemma}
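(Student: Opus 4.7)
The plan is to argue by contradiction and reduce everything to Lemma \ref{434203923932}. Suppose to the contrary that some point on the open segment lies in $\overline{\mathcal{X}}$; write it as
\[\alpha=s(t\gamma-\eta)+(1-s)\gamma=c\,\gamma-s\,\eta\in\overline{\mathcal{X}},\]
where $s\in(0,1)$ and $c:=1+s(t-1)$. The hypotheses $s\in(0,1)$ and $t>0$ force $c>0$: if $t\ge 1$ then $c\ge 1$, and if $0<t<1$ then $c=1-s(1-t)>0$ since $s(1-t)<1$.

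Next, I would apply Lemma \ref{434203923932} to the convex cone $\mathcal{X}$ with interior point $\eta\in\mathcal{X}^{\circ}$ and closure point $\alpha\in\overline{\mathcal{X}}$: the entire open segment $(\eta,\alpha)$ then lies in $\mathcal{X}^{\circ}$. The key trick is to choose the distinguished parameter $r=s/(1+s)\in(0,1)$, which makes the $\eta$-contributions cancel in the convex combination, yielding
\[r\eta+(1-r)\alpha=\tfrac{c}{1+s}\,\gamma\in\mathcal{X}^{\circ}.\]
Thus $\lambda\gamma\in\mathcal{X}^{\circ}$ for the positive scalar $\lambda=c/(1+s)$.

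To close the argument, I would invoke the fact that the relative interior of a convex cone is invariant under positive rescaling: the map $x\mapsto\mu x$ for $\mu>0$ is a linear self-homeomorphism of $\langle\mathcal{X}\rangle_{\mathbb{R}}$ carrying $\mathcal{X}$ to itself, hence carrying $\mathcal{X}^{\circ}$ to itself. Applying this with $\mu=1/\lambda$ gives $\gamma\in\mathcal{X}^{\circ}$, which contradicts the standing hypothesis $\gamma\in\partial\mathcal{X}=\overline{\mathcal{X}}\setminus\mathcal{X}^{\circ}$.

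Geometrically, the content of the lemma is simply that $\gamma$ lies on the relative boundary, so on the side of $\gamma$ opposite an interior point $\eta$ one must leave $\overline{\mathcal{X}}$ immediately. I do not anticipate a serious obstacle; the only bookkeeping is verifying $c>0$ and selecting the parameter $r$ that algebraically isolates a positive multiple of $\gamma$, both of which are short direct checks.
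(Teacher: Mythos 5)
Your proof is correct and takes essentially the same route as the paper: assume a point $\alpha$ of the open segment lies in $\overline{\mathcal{X}}$, apply Lemma \ref{434203923932} to the pair $\eta\in\mathcal{X}^{\circ}$, $\alpha\in\overline{\mathcal{X}}$, and conclude that (a positive multiple of) $\gamma$ lies in $\mathcal{X}^{\circ}$, contradicting $\gamma\in\partial\mathcal{X}$. The only difference is that the paper writes out just $t=2$, where $\gamma$ itself sits on $(\alpha,\eta)$, and dismisses the rest as similar, while you handle arbitrary $t>0$ explicitly via the choice $r=s/(1+s)$ and the positive-rescaling invariance of the relative interior of a cone.
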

	\begin{proof}
		We prove the assertion for $t=2$. The remaining cases can be obtained, similarly.
		Let $\alpha\in (2\gamma-\eta,\gamma)\cap\overline{\mathcal{X}}$ (see Figure \ref{460759178161}). Then  by Lemma \ref{434203923932}, we have
		\[\gamma\in (\alpha,\eta)\subseteq \mathcal{X}^{\circ},\]
		which contradicts the assumption.
		\begin{figure}
			\centering
			\textbf{}\par\medskip
			\[\begin{tikzpicture}[scale=0.05, rotate around z=230] 
				% Define coordinates
				\coordinate (A) at (-10,0); 
				\coordinate (B) at (20,80); 
				\coordinate (C) at (50,0); 
				\coordinate (D) at (15,25);
				\coordinate (E) at (25,10);
				\coordinate (G) at (-20,15);
				
				% Find intersection point F
				\coordinate (F) at (intersection of A--D and E--G);
				
				% Draw lines
				\draw (A) -- (D);
				\draw[thick] (E) -- node[pos=0.8, above]{$\alpha$} node[pos=0.75]{$\scalemath{0.5}{\bullet}$} (G);
				\draw (A) -- (B); 
				\draw (B) -- (C); 
				\draw (C) -- (A); 
				\draw[dotted] (B) -- (D);  
				\draw (C) -- (D); 
				
				% Add labels and point markers
				\node at (B) [right] {0}; 
				\filldraw[black] (E) circle (20pt); % Smaller point for eta
				\node at (E) [left] {$\eta$};
				\filldraw[black] (G) circle (20pt);
				\node at (G) [right] {$2\gamma-\eta$};
				\filldraw[black] (F) circle (20pt);
				\node at (F) [left] {$\gamma$};
			\end{tikzpicture}\]
			\caption{An illustration of the proof of Lemma \ref{075288509646}} \label{460759178161}
		\end{figure}
	\end{proof}
	\begin{proposition}\label{206559753624}
		Let $\theta,\eta\in K_0(\proj\Lambda)_{\mathbb{R}}$. If $\eta\in\partial [\theta]_{\TF}\setminus [\theta]_{\TF}$, then
		\[\dim_{\mathbb{R}}\langle [\eta]_{\TF} \rangle_{\mathbb{R}}\lneqq\dim_{\mathbb{R}}\langle [\theta]_{\TF} \rangle_{\mathbb{R}}.\]
	\end{proposition}
	\begin{proof}
		It follows from Lemma \ref{532756467772} that $[\eta]_{\TF}$ is included in $\partial [\theta]_{\TF}\setminus [\theta]_{\TF}$. Thus, $\langle [\eta]_{\TF} \rangle_{\mathbb{R}}\subseteq\langle [\theta]_{\TF} \rangle_{\mathbb{R}}$. Therefore,
		\begin{equation}\label{eq305294877718}
			\dim_{\mathbb{R}}\langle [\eta]_{\TF} \rangle_{\mathbb{R}}=\dim_{\mathbb{R}}\langle [\theta]_{\TF} \rangle_{\mathbb{R}},
		\end{equation}
		implies that
		$\langle [\eta]_{\TF} \rangle_{\mathbb{R}}=\langle [\theta]_{\TF} \rangle_{\mathbb{R}}$.
		Without loss of generality, assume that $\theta\in [\theta]_{\TF}^{\circ}$ and $\eta\in [\eta]_{\TF}^{\circ}$. Consider an open ball $B\subseteq[\eta]_{\TF}$ containing $\eta$. Then, \eqref{eq305294877718} implies that $B\cap (\eta,\theta)\neq\emptyset$. On the other hand, by Lemma \ref{434203923932}, we have $(\eta,\theta)\subseteq[\theta]_{\TF}^{\circ}$, and so $B\cap (\eta, \theta)=\emptyset$. This contradicts the equality \eqref{eq305294877718}.
	\end{proof}
	\begin{definition}\label{142707043868}
		Let $\theta$ be an arbitrary vector. Then, we set
		\[\begin{array}{l l}
			\TF^{\mathrm{ss}}_{\mathbb{Z}}(\theta)= \{(\overline{\mathcal{T}}_h, \overline{\mathcal{F}}_h)\mid h\in K_0(\proj\Lambda), \overline{\mathcal{T}}_\theta\subseteq \overline{\mathcal{T}}_h, \overline{\mathcal{F}}_\theta\subseteq \overline{\mathcal{F}}_h\}, & \TF^{\mathrm{ss}}_{\mathbb{Z}}(\Lambda)=\bigcup\limits_{\scalemath{0.8}{\theta\in \mathbb{R}^n}}\TF^{\mathrm{ss}}_{\mathbb{Z}}(\theta).
		\end{array}\]
	\end{definition}
	\begin{remark}
		Let $\theta$ be an arbitrary vector. Then, the following assertions hold.
		\begin{itemize}
			\item[$(1)$] $(\overline{\mathcal{T}}_0, \overline{\mathcal{F}}_0)= (\mod\Lambda, \mod\Lambda)\in \TF^{\mathrm{ss}}_{\mathbb{Z}}(\theta)$.
			\item[$(2)$] For all $\gamma\in\overline{[\theta]_{\TF}}$, we have $\TF^{\mathrm{ss}}_{\mathbb{Z}}(\gamma)\subseteq\TF^{\mathrm{ss}}_{\mathbb{Z}}(\theta)$.
			\item[$(3)$] $\TF^{\mathrm{ss}}_{\mathbb{Z}}(\Lambda)$ is a partially ordered set ordered by inclusion.
		\end{itemize}
		Moreover, for a g-vector $g$, $|\TF^{\mathrm{ss}}_{\mathbb{Z}}(g)|=1$ if and only if $g=0$.
	\end{remark}
	\begin{corollary}\label{868524213012}
		Let $g$ be a g-vector. Then, each subset of  $\TF^{\mathrm{ss}}_{\mathbb{Z}}(\Lambda)$ has a minimal element.
	\end{corollary}
	\begin{proof}
		Choose an arbitrary subset $\mathcal{S}$ of $\TF^{\mathrm{ss}}_{\mathbb{Z}}(\Lambda)$. If there is no minimal element in $\mathcal{S}$, then one can find a strictly descending chain of the form
		\[\ldots\subsetneq (\overline{\mathcal{T}}_{\theta_2},\overline{\mathcal{F}}_{\theta_2})\subsetneq (\overline{\mathcal{T}}_{\theta_1},\overline{\mathcal{F}}_{\theta_1})\subsetneq (\overline{\mathcal{T}}_{\theta_0},\overline{\mathcal{F}}_{\theta_0}).\]
		Hence, by Proposition \ref{206559753624}, we conclude that there exists the following strictly ascending chain of positive integers:
		\[\ldots>\dim_{\mathbb{R}}\langle[\theta_2]_{\TF}\rangle_{\mathbb{R}}>\dim_{\mathbb{R}}\langle[\theta_1]_{\TF}\rangle_{\mathbb{R}}>\dim_{\mathbb{R}}\langle[\theta_0]_{\TF}\rangle_{\mathbb{R}}.\]
		However, $K_0(\proj\Lambda)_{\mathbb{R}}$ is of dimension $n$. This leads to a contradiction.
	\end{proof}
	\section{Generic decompositions and cones}
	Generic decomposition plays a central role in our study concerning $\TF$-equivalence classes and semistable torsion pairs.
	In this section, we provide some observations regarding the cones and $\TF$-equivalence classes of g-vectors. Specifically, using semistable torsion pairs and morphism torsion pairs, we prove (the second part of) Theorem \ref{310913388628}\footnote{The first part was proved by Asai and Iyama \cite[Corollary 3.15]{AsIy24}.}. First, we recall the notion of generic decomposition and tameness of g-vectors\footnote{For an overview of the notion of generic decomposition, (non-expert) readers are referred to \cite[Section 2]{HY25b}.}.
	
	Let $g$ be a g-vector. Set $\Hom_\Lambda(g)$ for $\Hom_\Lambda(P^{g-}, P^{g+})$, where $g=[P^{g+}]-[P^{g-}]$, and $P^{g+}$ and $P^{g-}$ are finitely generated projective modules without any common non-zero direct summands.
	Based on \cite[Theorem 4.4]{DeFe15}, there exist finitely many g-vectors $g_1,g_2,\ldots,g_s$ such that
	\begin{itemize}
		\item[$(1)$] for a general\footnote{``\textit{a general element} of a variety $\mathcal{X}$'' means an arbitrary element of a dense open subset of $\mathcal{X}$.} morphism $a$ in $\Hom_{\Lambda}(g)$, there are general morphisms $a_i\in\Hom_{\Lambda}(g_i)$, $1\le i\le s$ such that $a=a_1\oplus a_2\oplus\ldots\oplus a_s$, and
		\item[$(2)$] for each $1\le i\le s$, a general morphism of $\Hom_{\Lambda}(g_i)$ is indecomposable.
	\end{itemize}
	In this case, we write $g=g_1\oplus g_2\oplus\ldots\oplus g_s$. If a general morphism of $\Hom_{\Lambda}(g)$ is indecomposable, $g$ is said to be \textit{generically indecomposable}. The set of all generically indecomposable direct summands of $g$ is denoted by $\ind(g)$. Additionally,  we set $\ind(\mathbb{N}g)$ for the set of all generically indecomposable direct summands of elements in the set $\{tg\mid t\in\mathbb{N}\}$, and $\add(g)$ for the set of all direct summands of direct sums of $g$.
	\begin{definition}
		Let $g$ be a g-vector. If $2g=g\oplus g$, then $g$ is called \textit{tame}. Otherwise, it is said to be \textit{wild}. Moreover, generically indecomposable tame g-vectors in $\ind(g)$ are denoted by $\tame(\ind(g))$.
	\end{definition}
	\begin{notation}
		Let $g$ be a g-vector. Then, we define
		\[
		\begin{array}{l l}
			D_g:=\{h\in K_0(\proj\Lambda)\mid\exists s\in\mathbb{N}; g+sh=g\oplus sh\}, & D_{\mathbb{N}g}:=\bigcup_{t\in\mathbb{N}}D_{tg}.
		\end{array}
		\]
	\end{notation}
	The following lemma is established by Asai and Iyama \cite[Corollary 4.5]{AsIy24}. For the reader's convenience, we include its proof here.
	\begin{lemma}\label{076626606506}
		Let $g$ and $h$ be g-vectors. Then, the following conditions are equivalent.
		\begin{itemize}
			\item[$(1)$] $h\in D_g$.
			\item[$(2)$] There exists $a\in\Hom_{\Lambda}(g)$ such that $\mathcal{T}_{a}\subseteq\overline{\mathcal{T}}_h$ and $\mathcal{F}_a\subseteq\overline{\mathcal{F}}_h$.
			\item[$(3)$] There exists $a\in\Hom_{\Lambda}(g)$ such that $\Coker a\in\overline{\mathcal{T}}_h$ and $\Ker\nu a\in\overline{\mathcal{F}}_h$.
		\end{itemize}
		In this case, $\overline{[h]_{\TF}}\cap K_0(\proj\Lambda)\subseteq D_g$.
	\end{lemma}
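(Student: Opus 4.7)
The plan is to first dispense with the easy equivalence (2)~$\Leftrightarrow$~(3), then run the cycle (1)~$\Rightarrow$~(3)~$\Rightarrow$~(1) using the Derksen--Fei $E$-invariant machinery, and finally derive the ``moreover'' clause from the reformulation (3).

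The equivalence (2)~$\Leftrightarrow$~(3) is the universal property of the operators $\mathsf{T}$ and $\mathsf{F}$: since $\overline{\mathcal{T}}_h$ is a torsion class and $\mathcal{T}_a = \mathsf{T}(\Coker a)$ is by definition the smallest torsion class containing $\Coker a$, the inclusion $\mathcal{T}_a \subseteq \overline{\mathcal{T}}_h$ is equivalent to $\Coker a \in \overline{\mathcal{T}}_h$, and dually on the torsion-free side.

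For (1)~$\Rightarrow$~(3), I would fix $s$ with $g + sh = g \oplus sh$ and use the definition of generic decomposition (see \ref{108559221925}) to split a general $c \in \Hom_\Lambda(g+sh)$ as $a \oplus b$ with $a \in \Hom_\Lambda(g)$ and $b \in \Hom_\Lambda(sh)$ general. That the decomposition is \emph{generic} forces the Derksen--Fei $E$-invariants to vanish in both directions, namely $\Hom(\Coker a, \Ker \nu b) = 0$ and $\Hom(\Coker b, \Ker \nu a) = 0$. Unpacking the definitions then gives $\Coker a \in \prescript{\perp}{}{\Ker \nu b} = \overline{\mathcal{T}}_b$ and $\Ker \nu a \in (\Coker b)^\perp = \overline{\mathcal{F}}_b$, which by Proposition~\ref{341408431998} sit inside $\overline{\mathcal{T}}_h$ and $\overline{\mathcal{F}}_h$ respectively.

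For (3)~$\Rightarrow$~(1), Proposition~\ref{341408431998} yields $s_1, s_2 \in \mathbb{N}$ and $b_i \in \Hom_\Lambda(s_i h)$ with $\Coker a \in \overline{\mathcal{T}}_{b_1}$ and $\Ker \nu a \in \overline{\mathcal{F}}_{b_2}$. Setting $s = s_1 + s_2$ and $b = b_1 \oplus b_2 \in \Hom_\Lambda(sh)$, a short check shows that both $\Hom(\Coker a, \Ker \nu b)$ and $\Hom(\Coker b, \Ker \nu a)$ vanish for this particular pair. The key step is the generic upgrade: on the irreducible variety $\Hom_\Lambda(g) \times \Hom_\Lambda(sh)$, the function $(a', b') \mapsto \dim \Hom(\Coker a', \Ker \nu b')$ is upper semicontinuous, so its vanishing locus is a nonempty open, hence dense, subset. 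Therefore the generic $E$-invariants $e(g, sh)$ and $e(sh, g)$ vanish, and the Derksen--Fei criterion for generic direct sums (as reviewed in \cite{HY25}) delivers $g + sh = g \oplus sh$. The ``moreover'' clause is then immediate: by Lemma~\ref{532756467772}, $h' \in \overline{[h]_{\TF}}$ means $\overline{\mathcal{T}}_h \subseteq \overline{\mathcal{T}}_{h'}$ and $\overline{\mathcal{F}}_h \subseteq \overline{\mathcal{F}}_{h'}$, so a witness $a$ for $h$ furnished by~(3) simultaneously witnesses $h'$, and $h' \in D_g$.

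The main obstacle is this generic upgrade in (3)~$\Rightarrow$~(1): one must promote pointwise Hom-vanishings for a single pair $(a,b)$ to genericity across the ambient variety and then invoke the $E$-invariant characterization of generic direct sums. The remaining steps are bookkeeping with the definitions of $\mathsf{T}$, $\mathsf{F}$ and the union formula of Proposition~\ref{341408431998}.
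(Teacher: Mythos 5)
Your reformulation (2)$\Leftrightarrow$(3), your direction (1)$\Rightarrow$(3), and your handling of the ``moreover'' clause via Lemma \ref{532756467772} are all fine and consistent with the paper's (much shorter) proof, which simply routes everything through \cite[Proposition 2.12]{HY25}, Proposition \ref{341408431998} and \cite[Proposition 3.11]{AsIy24}. But the direction (3)$\Rightarrow$(1) has a genuine gap at the ``short check''. From (3) and Proposition \ref{341408431998} you only get $\Hom(\Coker a,\Ker\nu b_1)=0$ and $\Hom(\Coker b_2,\Ker\nu a)=0$ for possibly different witnesses $b_1\in\Hom_{\Lambda}(s_1h)$, $b_2\in\Hom_{\Lambda}(s_2h)$. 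For $b=b_1\oplus b_2$ one has $\Ker\nu b=\Ker\nu b_1\oplus\Ker\nu b_2$ and $\Coker b=\Coker b_1\oplus\Coker b_2$, so $\Hom(\Coker a,\Ker\nu b)$ contains the uncontrolled summand $\Hom(\Coker a,\Ker\nu b_2)$ and $\Hom(\Coker b,\Ker\nu a)$ contains the uncontrolled summand $\Hom(\Coker b_1,\Ker\nu a)$. Put differently, $\overline{\mathcal{T}}_{b_1\oplus b_2}=\overline{\mathcal{T}}_{b_1}\cap\overline{\mathcal{T}}_{b_2}$ and $\overline{\mathcal{F}}_{b_1\oplus b_2}=\overline{\mathcal{F}}_{b_1}\cap\overline{\mathcal{F}}_{b_2}$: forming the direct sum shrinks the relevant classes, so it combines two different witnesses in the wrong direction, and your chosen point need not lie in either vanishing locus. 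Since your subsequent semicontinuity argument on $\Hom_{\Lambda}(g)\times\Hom_{\Lambda}(sh)$ needs at least one point where each function vanishes, the argument as written does not close.

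The repair is close by, but it is not the pair $(a,b_1\oplus b_2)$: take $s$ a common multiple of $s_1$ and $s_2$ (e.g.\ $s=s_1s_2$, not $s_1+s_2$), and observe that $(a,b_1^{\oplus s/s_1})$ witnesses the vanishing of $(a',b')\mapsto\dim\Hom(\Coker a',\Ker\nu b')$ while $(a,b_2^{\oplus s/s_2})$ witnesses the vanishing of $(a',b')\mapsto\dim\Hom(\Coker b',\Ker\nu a')$; each vanishing locus is then a nonempty open, hence dense, subset of the irreducible variety, and a general point lies in both, after which the Derksen--Fei criterion applies as you intended. This directedness issue is precisely what the paper avoids by quoting \cite[Proposition 3.11]{AsIy24} together with \cite[Proposition 2.12]{HY25} instead of arguing by hand; with that one step fixed, your proof is a legitimate unpacking of those citations, and your explicit derivation of the ``moreover'' clause is a small improvement on the paper, which leaves it implicit.
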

	\begin{proof}
		By definition, the second and the third conditions are equivalent. Moreover, by \cite[Proposition 2.12]{HY25b}, the first condition holds if and only if there exists $s\in\mathbb{N}$, $a\in\Hom_{\Lambda}(g)$ and $b\in\Hom_{\Lambda}(sh)$ such that $e(a,b)=e(b,a)=0$. On the other hand, by Proposition \ref{341408431998}, the second condition is equivalent to the existence of $s\in\mathbb{N}$, $a\in\Hom_{\Lambda}(g)$ and $b\in\Hom_{\Lambda}(sh)$ such that $\mathcal{T}_a\subseteq\overline{\mathcal{T}}_b$ and $\mathcal{F}_a\subseteq\overline{\mathcal{F}}_b$. Therefore, the equivalence of conditions $(1)$ and $(2)$ follows from \cite[Proposition 3.11]{AsIy24}.
	\end{proof}
	\begin{proposition}\label{144840550539}
		Let $g$, $g'$, $h$ and $h'$ be g-vectors. Then the following statements hold.
		\begin{itemize}
			\item[$(1)$] $[g]_{\TF}=[g']_{\TF}$ implies that $D_{\mathbb{N}g}=D_{\mathbb{N}g'}$.
			\item[$(2)$] If both $h$ and $h'$ belong to the same $\TF$-equivalence class and $D_{g}$, then $th-h'\in D_g$ for some $t\in\mathbb{N}$.
		\end{itemize}
	\end{proposition}
	\begin{proof}
		The first assertion follows directly from definition. We prove the second one.
		By Lemma \ref{076626606506}, there is $a\in\Hom_{\Lambda}(g)$ such that $\Coker a\in\overline{\mathcal{T}}_h=\overline{\mathcal{T}}_{h'}$ and $\Ker\nu a\in\overline{\mathcal{F}}_h=\overline{\mathcal{F}}_{h'}$. Therefore, by Lemma \ref{284590845114}, there exists $t\in\mathbb{N}$ such that $\Coker a\in\overline{\mathcal{T}}_{th-h'}$ and $\Ker\nu a\in\overline{\mathcal{F}}_{th-h'}$. Thus, again, by Lemma \ref{076626606506}, $th-h'\in D_{g}$.
	\end{proof}
	
	\subsection{Cones}
	\begin{definition}
		Let $g$ be a g-vector. Then, the \textit{(open) cone of} $g$ is defined to be
		\[
		\Cone^{\circ}\{\ind(\mathbb{N}g)\}:=\bigcup_{t\in\mathbb{N}}\Cone^{\circ}\{\ind(tg)\}\subseteq\Cone\{\ind(\mathbb{N}g)\}^{\circ}.
		\]
	\end{definition}
	In \cite[Theorem 3.14]{AsIy24}, it is established that there are strong connections among these cones, $\TF$-equivalence classes and generic decompositions of g-vectors. This observation led Asai and Iyama to pose \cite[Conjecture 1.2]{AsIy24}, which is a main subject of our paper. This lemma relates the cone of a g-vector to its $\TF$-equivalence class, and is used repeatedly throughout the remainder of the paper.
	\begin{lemma}\label{206430023172} 
		Let $g$ be a g-vector and $g=g_1\oplus g_2\oplus \ldots \oplus g_s$ its generic decomposition. Then
		\[\begin{array}{l l l l l}
			\scalemath{.9}{\overline{\mathcal{T}}_{g}=\bigcap\limits_{i=1}^{s}\overline{\mathcal{T}}_{g_i}},&\scalemath{.9}{\overline{\mathcal{F}}_{g}=\bigcap\limits_{i=1}^{s}\overline{\mathcal{F}}_{g_i}},&\scalemath{.9}{\mathcal{T}_{g}=\bigcup\limits_{i=1}^{s}\mathcal{T}_{g_i}},&\scalemath{.9}{\mathcal{F}_{g}=\bigcup\limits_{i=1}^{s}\mathcal{F}_{g_i}},&\scalemath{.9}{\mathcal{W}_{g}=\bigcap\limits_{i=1}^{s}\mathcal{W}_{g_i}}.
		\end{array}\]
		Moreover, $\Cone^{\circ}\{\ind(\mathbb{N}g)\}\subseteq [g]_{\TF}$.
	\end{lemma}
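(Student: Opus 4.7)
The plan is to derive the intersection/union formulas from Proposition \ref{341408431998} by exploiting the direct-sum decomposition of a general $a \in \Hom_\Lambda(tg)$ provided by the generic decomposition of $g$, and then deduce the ``moreover'' statement from the $\overline{\mathcal{T}}_g$- and $\overline{\mathcal{F}}_g$-formulas. The crucial algebraic input is that by Description \ref{108559221925}, a general $a \in \Hom_\Lambda(tg)$ factors as $a = a_1 \oplus \cdots \oplus a_s$ with each $a_i$ general in $\Hom_\Lambda(tg_i)$. Since both $\Coker$ and $\Ker\nu(-)$ are additive, one has $\Coker a = \bigoplus_i \Coker a_i$ and $\Ker \nu a = \bigoplus_i \Ker \nu a_i$, which immediately yields $\overline{\mathcal{T}}_a = \bigcap_i \overline{\mathcal{T}}_{a_i}$ and $\overline{\mathcal{F}}_a = \bigcap_i \overline{\mathcal{F}}_{a_i}$ for such decomposable $a$.

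For the inclusion $\bigcap_i \overline{\mathcal{T}}_{g_i} \subseteq \overline{\mathcal{T}}_g$, I would take $M$ in the intersection, choose witnesses $a_i \in \Hom_\Lambda(t_i g_i)$ with $M \in \overline{\mathcal{T}}_{a_i}$ via Proposition \ref{341408431998}, pass to a common $t$ (using that $\overline{\mathcal{T}}_a$ is invariant under repeating $a$ with itself, since the $\perp$ depends only on the isomorphism class of summands), and assemble $a = \bigoplus a_i \in \Hom_\Lambda(tg)$ to conclude $M \in \overline{\mathcal{T}}_a \subseteq \overline{\mathcal{T}}_g$. The reverse inclusion is the main subtlety: given $M \in \overline{\mathcal{T}}_a$ for some $a$, the defining condition $\Hom(M, \Ker \nu a) = 0$ is equivalent to the injectivity of the induced linear map $\Hom(M, \nu P^{g-}) \to \Hom(M, \nu P^{g+})$, which is an \emph{open} condition on $a$. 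Hence the condition holds for a general $a'$, which by generic decomposition splits as $a' = \bigoplus a_i'$, and then $M \in \overline{\mathcal{T}}_{a'} = \bigcap_i \overline{\mathcal{T}}_{a_i'} \subseteq \overline{\mathcal{T}}_{g_i}$ for each $i$. The $\overline{\mathcal{F}}_g$ formula follows dually; the $\mathcal{T}_g$ and $\mathcal{F}_g$ cases follow by the analogous argument using $\mathcal{T}_a = \mathsf{T}(\Coker a)$ and $\mathcal{F}_a = \mathsf{F}(\Ker \nu a)$, with some care needed for the behavior of smallest torsion/torsion-free classes under direct sums. The $\mathcal{W}_g$ formula is then immediate from $\mathcal{W} = \overline{\mathcal{T}} \cap \overline{\mathcal{F}}$.

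For the ``moreover'' statement $\Cone^\circ\{\ind(g)\} \subseteq [g]_{\TF}$, I would handle rational $h = \sum c_j g_j$ first: scaling to an integer g-vector $Nh$ whose generic decomposition is supported on $\ind(g)$, the main formulas yield $\overline{\mathcal{T}}_{Nh} = \overline{\mathcal{T}}_g$ and $\overline{\mathcal{F}}_{Nh} = \overline{\mathcal{F}}_g$, and scale-invariance of the semistability conditions promotes this to $[h]_{\TF} = [g]_{\TF}$. For real $h$, I would pick a rational $h' \in \Cone^\circ\{\ind(g)\}$ and argue via Proposition \ref{987463920193} along the segment $[h, h']$, which stays in the cone by convexity. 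I expect the main obstacle to lie precisely here: while the positive-coefficient argument readily gives the forward inclusions $\overline{\mathcal{T}}_g \subseteq \overline{\mathcal{T}}_h$ and $\overline{\mathcal{F}}_g \subseteq \overline{\mathcal{F}}_h$ (so $h \in \overline{[g]_{\TF}}$ by Lemma \ref{532756467772}), upgrading to equality for real $h$ requires showing that $\mathcal{W}_\gamma$ is constant along $[h, h']$, which one must extract from a density argument combined with the closed nature of the semistability inequalities defining $\mathcal{W}_\gamma$.
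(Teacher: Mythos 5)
The paper itself offers no argument here: the lemma is imported verbatim from \cite[Theorem 3.14]{AsIy24}, so your proposal has to stand on its own, as an internal derivation from Proposition \ref{341408431998}. For the two closed classes your plan is essentially sound: $\Ker\nu(-)$ and $\Coker(-)$ are additive, the condition $\Hom(M,\Ker\nu a)=0$ (resp.\ $\Hom(\Coker a,M)=0$) is open in $a$, and $\Hom_{\Lambda}(tg)$ is an irreducible affine space, so a witness can be moved to a general element; this correctly gives $\overline{\mathcal{T}}_{g}=\bigcap_i\overline{\mathcal{T}}_{g_i}$ and $\overline{\mathcal{F}}_{g}=\bigcap_i\overline{\mathcal{F}}_{g_i}$. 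One justification you elide: Description \ref{108559221925} only decomposes a general element of $\Hom_{\Lambda}(g)$, not of $\Hom_{\Lambda}(tg)$ for $t\ge 2$. You need the separate (true, but not quoted) fact that $tg=tg_1\oplus\cdots\oplus tg_s$ remains a compatible direct sum, e.g.\ via subadditivity of the generic $E$-invariant, $e(tg_i,tg_j)\le t^2e(g_i,g_j)=0$; this matters precisely because the generic decomposition of $tg$ itself can change (that is the ray-condition failure this paper is about).

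The genuine gaps are in the other two thirds of your sketch. First, for $\mathcal{T}_g$ and $\mathcal{F}_g$ there is no ``analogous argument'': by Proposition \ref{341408431998} these are \emph{intersections} over all $a\in A$, so you have no freedom to replace a witness by a general one, and $\mathsf{T}(\Coker(a_1\oplus a_2))=\mathsf{T}(\Coker a_1)\vee\mathsf{T}(\Coker a_2)$ is a join, not a union or intersection. In fact the set-theoretic union identity as printed already fails in the simplest example: for $\Lambda=k(1\to 2)$ and $g=[P_{(1)}]+[P_{(2)}]=[P_{(1)}]\oplus[P_{(2)}]$ one has $\mathcal{T}_g=\mod\Lambda$, while $S_{(1)}\oplus S_{(2)}$ lies in neither $\mathcal{T}_{[P_{(1)}]}$ nor $\mathcal{T}_{[P_{(2)}]}$ (its quotient $S_{(2)}$ pairs to $0$ with $[P_{(1)}]$, and $S_{(1)}$ with $[P_{(2)}]$). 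So what you would actually have to prove is the corrected statement of \cite{AsIy24} (the torsion, resp.\ torsion-free, class generated by the $\mathcal{T}_{g_i}$, resp.\ $\mathcal{F}_{g_i}$), and that does not follow formally from your setup. Second, in the ``moreover'' part your scaling step is wrong whenever some $g_j$ is wild: for $Nh=\sum_j(Nc_j)g_j$ with some $Nc_j\ge 2$, the generic decomposition of $Nh$ is \emph{not} supported on $\ind(g)$, since a wild summand cannot be repeated. The rational case can be repaired by applying your intersection argument to the compatible (non-generic) decomposition $Nh=\bigoplus_j(Nc_j)g_j$ together with scale-invariance of $\overline{\mathcal{T}}$ and $\overline{\mathcal{F}}$, but the irrational points of $\Cone^{\circ}\{\ind(g)\}$ — which you defer to a density/closedness argument along a segment and yourself flag as the obstacle — are exactly the substantive half: your positive-coefficient argument only yields $h\in\overline{[g]_{\TF}}$ via Lemma \ref{532756467772}, and upgrading this to $\TF$-equivalence is left unproven.
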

	\begin{lemma}\label{806138874129}
		Consider a g-vector $g$. Then, for any $t,t'\in\mathbb{N}$,
		\[\begin{array}{l l}
			\Cone^{\circ}\{\ind(tg)\}\subseteq\Cone^{\circ}\{\ind(t'tg)\}, & \Cone\{\ind(tg)\}\subseteq\Cone\{\ind(t'tg)\}.
		\end{array}\]
		Therefore, for all $t\in\mathbb{N}$,
		\[\begin{array}{l l}
			\Cone\{\ind(\mathbb{N}tg)\}= \Cone\{\ind(\mathbb{N}g)\}, & 
			\Cone^{\circ}\{\ind(\mathbb{N}tg)\}= \Cone^{\circ}\{\ind(\mathbb{N}g)\}.
		\end{array}\]
	\end{lemma}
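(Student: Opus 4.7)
The plan is to exploit the compatibility of generic decompositions with scalar multiplication. Write the generic decomposition of $tg$ as $tg = \sum_{j=1}^{m} b_j h_j$, so $\ind(tg) = \{h_1, \ldots, h_m\}$ with multiplicities $b_j \in \mathbb{N}$. The crucial claim is that the generic decomposition of $t'tg = \sum_j (t'b_j)h_j$ is obtained by independently taking the generic decomposition of each $(t'b_j)h_j$ and concatenating the results; consequently,
\[
\ind(t'tg) \;=\; \bigcup_{j=1}^{m} \ind\bigl((t'b_j)h_j\bigr).
\]
This reduces to the observation that the ``generic orthogonality'' between distinct generic indecomposable summands (in the sense of Description \ref{108559221925}) is preserved under scaling, a standard feature of the generic decomposition theory of \cite{DeFe15}.

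Granting this, the inclusion $\Cone\{\ind(tg)\} \subseteq \Cone\{\ind(t'tg)\}$ is immediate: for each $j$, writing $(t'b_j)h_j = \sum_\ell d_{j,\ell} k_{j,\ell}$ as the generic decomposition with $d_{j,\ell} \in \mathbb{N}$ and $k_{j,\ell} \in \ind(t'tg)$, one has
\[
h_j \;=\; \frac{1}{t'b_j}\sum_\ell d_{j,\ell}\, k_{j,\ell} \;\in\; \Cone\{\ind(t'tg)\}.
\]
For the open-cone inclusion, I take $\eta = \sum_j c_j h_j$ with all $c_j > 0$ and substitute to obtain $\eta = \sum_{j,\ell}\tfrac{c_j d_{j,\ell}}{t'b_j}\,k_{j,\ell}$. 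Since $\ind(t'tg) = \bigcup_j \{k_{j,\ell}\}_\ell$, every element of $\ind(t'tg)$ appears as some $k_{j,\ell}$, so collecting like terms yields strictly positive coefficients throughout, giving $\eta \in \Cone^{\circ}\{\ind(t'tg)\}$.

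The stated equalities then follow formally. The containments $\Cone\{\ind(\mathbb{N}tg)\} \subseteq \Cone\{\ind(\mathbb{N}g)\}$ and its open counterpart are trivial from $\mathbb{N}tg \subseteq \mathbb{N}g$. Conversely, any $\eta \in \Cone\{\ind(sg)\}$ lies in $\Cone\{\ind(tsg)\} = \Cone\{\ind(s(tg))\} \subseteq \Cone\{\ind(\mathbb{N}tg)\}$ by the first part of the lemma applied with $(t,t')=(s,t)$; the identical argument works for the open cones.

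I expect the genuine obstacle to be the scaling-compatibility claim in the first paragraph. The set-theoretic identity $\ind(t'tg) = \bigcup_j \ind((t'b_j)h_j)$ is intuitive but not formal, and a clean justification requires verifying that the general-morphism structure underlying the generic decomposition of $tg$ is preserved when passing from $tg$ to $t'tg$ — essentially, that the vanishing $e$-invariants guaranteeing the $h_j$-splitting persist after scaling. Once this is secured, the cone inclusions and the equalities for $\mathbb{N}g$ become bookkeeping.
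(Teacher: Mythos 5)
Your proposal is correct and follows essentially the same route as the paper: the paper likewise argues that the generic direct sum relation underlying $\ind(tg)$ persists under scaling by $t'$ (so each element of $\Cone^{\circ}\{\ind(tg)\}$ becomes a strictly positive combination of all of $\ind(t'tg)$), and then deduces the $\mathbb{N}g$-equalities formally. The scaling-compatibility you flag as the crucial claim is exactly the step the paper also uses (without further elaboration); it follows from the characterization of generic direct sums via vanishing $e$-invariants together with their additivity and upper semicontinuity, as in \cite[Proposition 2.12]{HY25}, just as you indicate.
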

	\begin{proof}
		Assume that $g=g_1\oplus g_2\oplus\ldots\oplus g_s$ is the generic decomposition of $g$. Fix a g-vector $h\in\Cone\{\ind(g)\}$. Then, for some non-negative integers $u_i$, $0\le i \le s$, we have
		\[u_0 h=u_1 g_1\oplus\ldots\oplus u_s g_s.\]
		Therefore, for an arbitrary positive integer $t'$, $t'u_0 h=t'u_1 g_1\oplus\ldots\oplus t'u_s g_s$. Thus, $h\in\Cone\{\ind(t'g)\}$. Similarly, we can show that $\Cone^{\circ}\{\ind(g)\}$ is included in $\Cone^{\circ}\{\ind(t'g)\}$. Now, replace $g$ with $tg$ and complete the proof of the first part. Moreover, the second part follows immediately, from the definition and the first part.
	\end{proof}
	\begin{theorem}\label{758672546350}
		Consider a g-vector $g$. Then,
		\[\begin{array}{ll}
			\Cone\{\ind(\mathbb{N}g)\}= \bigcup\limits_{t\in\mathbb{N}}\Cone\{\ind(tg)\}, &
			\Cone\{\ind(\mathbb{N}g)\}^{\circ}= \Cone^{\circ}\{\ind(\mathbb{N}g)\}.
		\end{array}\]
		Moreover, $\partial\Cone\{\ind(\mathbb{N}g)\}\subseteq \bigcup\limits_{t\in\mathbb{N}} \partial\Cone\{\ind(tg)\}$.
	\end{theorem}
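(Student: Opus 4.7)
The plan is to establish parts (1) and (2) by a common-multiple argument driven by Lemma \ref{806138874129}, and then to deduce part (3) as a set-theoretic consequence together with the closedness of each finitely generated cone $\Cone\{\ind(tg)\}$.

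For part (1), the inclusion $\bigcup_{t\in\mathbb{N}}\Cone\{\ind(tg)\}\subseteq\Cone\{\ind(\mathbb{N}g)\}$ is immediate. For the reverse, an element $h\in\Cone\{\ind(\mathbb{N}g)\}$ is a finite non-negative combination $h=\sum_{j=1}^{m}c_{j}h_{j}$ with $h_{j}\in\ind(t_{j}g)$ for some $t_{j}\in\mathbb{N}$. Setting $T:=\prod_{j}t_{j}$, Lemma \ref{806138874129} gives $\Cone\{\ind(t_{j}g)\}\subseteq\Cone\{\ind(Tg)\}$ for every $j$, so each $h_{j}$ and hence $h$ itself lies in $\Cone\{\ind(Tg)\}$.

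For part (2), the inclusion $\supseteq$ is built into the definition of $\Cone^{\circ}\{\ind(\mathbb{N}g)\}$. For $\subseteq$, I use a simplex argument. Fix $h\in\Cone\{\ind(\mathbb{N}g)\}^{\circ}$, set $V:=\langle\Cone\{\ind(\mathbb{N}g)\}\rangle_{\mathbb{R}}$ and $d:=\dim_{\mathbb{R}}V$, and choose an open ball $U\ni h$ in $V$ with $U\subseteq\Cone\{\ind(\mathbb{N}g)\}$. Pick a basis $\{e_{1},\ldots,e_{d}\}$ of $V$ and define $p_{i}:=h+\epsilon e_{i}$ for $1\le i\le d$ and $p_{0}:=h-\epsilon\sum_{i}e_{i}$, with $\epsilon>0$ small enough that all $p_{i}\in U$. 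A direct check shows $\{p_{0},\ldots,p_{d}\}$ is affinely independent, so its linear span equals $V$, and $h=\tfrac{1}{d+1}\sum_{i=0}^{d}p_{i}$ has all barycentric coefficients strictly positive. By part (1) each $p_{i}$ lies in some $\Cone\{\ind(t_{i}g)\}$; taking a common multiple $T$ of the $t_{i}$ places all $p_{i}$ inside $\Cone\{\ind(Tg)\}$. Then $\Cone\{p_{0},\ldots,p_{d}\}\subseteq\Cone\{\ind(Tg)\}\subseteq\Cone\{\ind(\mathbb{N}g)\}$, and all three cones share the $\mathbb{R}$-span $V$. Since $h$ is a strictly positive combination of the $p_{i}$ spanning $V$, a small perturbation $h+\delta=\sum(\tfrac{1}{d+1}+\delta_{i})p_{i}$ still has non-negative coefficients, so $h$ lies in a $V$-open subset of $\Cone\{p_{0},\ldots,p_{d}\}$, hence in $\Cone^{\circ}\{\ind(Tg)\}\subseteq\Cone^{\circ}\{\ind(\mathbb{N}g)\}$.

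Part (3) is then bookkeeping from parts (1) and (2). If $h\in\partial\Cone\{\ind(\mathbb{N}g)\}$ actually lies in $\Cone\{\ind(\mathbb{N}g)\}$, part (1) places $h$ in some $\Cone\{\ind(Tg)\}$; $h$ cannot lie in $\Cone^{\circ}\{\ind(tg)\}$ for any $t$, since the definitional inclusion $\Cone^{\circ}\{\ind(tg)\}\subseteq\Cone^{\circ}\{\ind(\mathbb{N}g)\}\subseteq\Cone\{\ind(\mathbb{N}g)\}^{\circ}$ would contradict $h\in\partial\Cone\{\ind(\mathbb{N}g)\}$; and closedness of the finitely generated $\Cone\{\ind(Tg)\}$ then gives $h\in\Cone\{\ind(Tg)\}\setminus\Cone^{\circ}\{\ind(Tg)\}=\partial\Cone\{\ind(Tg)\}$. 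The main obstacle I anticipate is covering boundary points in $\overline{\Cone\{\ind(\mathbb{N}g)\}}\setminus\Cone\{\ind(\mathbb{N}g)\}$: for the inclusion in part (3) to include such points one needs $\Cone\{\ind(\mathbb{N}g)\}$ itself to be closed. This is a finiteness statement in the spirit of the stabilization results (Theorems \ref{076273627938} and \ref{355706191811}), and is the delicate piece that requires argument beyond the common-multiple technique used for (1) and (2).
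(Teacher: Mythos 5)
Parts (1) and (2) of your proposal are correct and are essentially the paper's own argument: the paper also deduces (1) from Lemma \ref{806138874129} by passing to a common multiple, and for (2) it first shows that $\Cone^{\circ}\{\ind(\mathbb{N}g)\}$ is dense in $\Cone\{\ind(\mathbb{N}g)\}$ and then uses convexity of the open cone around a point of the relative interior. Your explicit simplex $\{p_0,\dots,p_d\}$, pushed by (1) and Lemma \ref{806138874129} into a single $\Cone\{\ind(Tg)\}$, is the same mechanism, merely organized so that you land directly in $\Cone^{\circ}\{\ind(Tg)\}$ via the standard identification of the relative interior of a finitely generated cone with its set of strictly positive combinations; that step is fine.

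The gap is the ``Moreover'' statement, and you have located it exactly. Your bookkeeping only covers boundary points that already lie in $\Cone\{\ind(\mathbb{N}g)\}$: a point of $\overline{\Cone\{\ind(\mathbb{N}g)\}}\setminus\Cone\{\ind(\mathbb{N}g)\}$ lies, by (1), in no $\Cone\{\ind(tg)\}$ and hence in no $\partial\Cone\{\ind(tg)\}$, so the asserted inclusion forces $\Cone\{\ind(\mathbb{N}g)\}$ to be closed --- which is not available at this stage, since an increasing union of closed cones need not be closed. You should know that the paper's own proof does not supply this either: the ``Moreover'' part is never argued there, and the displayed chain in the paper ends with the unjustified equality $\overline{\Cone\{\ind(\mathbb{N}g)\}}=\Cone\{\ind(\mathbb{N}g)\}$, which is precisely the missing closedness. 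Closedness is only established later (Theorem \ref{076273627938} and Corollary \ref{355706191811}), but the proof of Theorem \ref{076273627938} invokes Remark \ref{050599086025}, which in turn cites the ``Moreover'' part of this theorem; so deferring your missing case to the stabilization results is not automatic and risks circularity unless one checks that the boundary points used there actually lie in the cone. In short: your (1) and (2) match the paper; your (3) is incomplete exactly where the paper is also silent, and that point is genuinely delicate rather than routine.
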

	\begin{proof}
		Obviously, the right-hand sides of the equations are included in the left-hand sides.
		For $t_i\in\mathbb{N}$, $1\le i\le s$, choose $h_i\in\ind(t_ig)$. Then  by Lemma \ref{806138874129}, there exists $t\in\mathbb{N}$ such that $t_i|t$ and $h_i\in\Cone\{\ind(tg)\}$, $1\le i\le s$. Therefore,
		\[\Cone\{h_i\mid 1\le i\le s\}\subseteq\Cone\{\ind(tg)\}.\]
		Thus, $\Cone\{\ind(\mathbb{N}g)\}\subseteq \bigcup\limits_{t\in\mathbb{N}}\Cone\{\ind(tg)\}$.
		
		Now, let $\theta$ be an arbitrary vector in $\Cone\{\ind(\mathbb{N}g)\}^{\circ}$. Set $\dim_{\mathbb{R}}\Cone\{\ind(\mathbb{N}g)\}=s$. Since $\Cone\{\ind(\mathbb{N}g)\}$ is a convex cone, we conclude that there exist linearly independent vectors $\theta_1$, $\theta_2$, $\ldots$, and $\theta_s$ in $\Cone\{\ind(\mathbb{N}g)\}$ such that $\theta\in\Cone^{\circ}\{\theta_i\mid 1\le i\le s\}$.
		By the first equality and Lemma \ref{806138874129}, it is easy to see that there exists $t\in\mathbb{N}$ such that for all $1\le i\le s$, $\theta_i\in\Cone\{\ind(tg)\}$ and $\dim_{\mathbb{R}}\Cone\{\ind(tg)\}=s$. This implies that $\Cone^{\circ}\{\theta_i\mid 1\le i\le s\}\subseteq\Cone^{\circ}\{\ind(tg)\}$.
		Therefore, $\theta$ must be included in $\Cone^{\circ}\{\ind(\mathbb{N}g)\}$.
		
		The last part follows immediately from the first part.
	\end{proof}
	\begin{lemma}\label{072929369060}
		Consider g-vectors $g$ and $h$. If there is a g-vector in
		\[\Cone^{\circ}\{\ind(\mathbb{N}g)\}\cap \Cone^{\circ}\{\ind(\mathbb{N}h)\},\]
		then $\Cone^{\circ}\{\ind(\mathbb{N}g)\}= \Cone^{\circ}\{\ind(\mathbb{N}h)\}$. In this case, there exist $t,s\in\mathbb{N}$ such that
		\[\begin{array}{l l}
			\add(\mathbb{N}tg)\subseteq \add(\mathbb{N}h), &
			\add(\mathbb{N}sh)\subseteq \add(\mathbb{N}g).
		\end{array}\]
	\end{lemma}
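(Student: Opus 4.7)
The strategy is to extract, from the intersection hypothesis, an equality of generically indecomposable summand sets $\ind(tg)=\ind(sh)$ for some $t,s\in\mathbb{N}$, and then propagate this through all multiples to obtain the global cone equality and the $\add$--containments.

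Let $f$ be a g-vector in the intersection. By Theorem \ref{758672546350}, there exist $t,s\in\mathbb{N}$ with $f\in\Cone^{\circ}\{\ind(tg)\}\cap\Cone^{\circ}\{\ind(sh)\}$. Writing $f$ as a strictly positive rational combination of $\ind(tg)$ and clearing denominators yields $Mf=\sum_i m_i g_i$ for positive integers $M,m_i$ and $g_i\in\ind(tg)$. Since the $g_i$'s appear jointly in the generic decomposition of $tg$, they are pairwise compatible and this expression is itself the generic decomposition of $Mf$. Analogously $M'f=\sum_j m'_j h_j$ is the generic decomposition of $M'f$, with $h_j\in\ind(sh)$. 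Expanding $MM'f$ in the two ways and using uniqueness of the generic decomposition gives $\ind(tg)=\ind(sh)$ as sets, with proportional multiplicities.

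For the equality of open cones, fix arbitrary $t'\in\mathbb{N}$ and let $T=\mathrm{lcm}(t,t')$. Since $t\mid T$, Lemma \ref{806138874129} gives $f\in\Cone^{\circ}\{\ind(Tg)\}$, so the same argument as above (applied now to the pair $(T,s)$) yields $\ind(Tg)=\ind(sh)$; hence
\[\Cone^{\circ}\{\ind(t'g)\}\subseteq\Cone^{\circ}\{\ind(Tg)\}=\Cone^{\circ}\{\ind(sh)\}\subseteq\Cone^{\circ}\{\ind(\mathbb{N}h)\}.\]
Taking the union over $t'$ gives $\Cone^{\circ}\{\ind(\mathbb{N}g)\}\subseteq\Cone^{\circ}\{\ind(\mathbb{N}h)\}$, and the symmetric argument supplies the reverse inclusion. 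For the $\add$--containments I reuse the specific $t,s$ from the first paragraph together with $\ind(tg)=\ind(sh)=:\{g_i\}$: writing $tg=\bigoplus_i n_i g_i$ and $sh=\bigoplus_i n'_i g_i$ and choosing $k\in\mathbb{N}$ large enough that $kn'_i\geq n_i$ for every $i$, one obtains $ksh=tg\oplus\bigoplus_i(kn'_i-n_i)g_i$, hence $tg\in\add(h)$. Passing to $\mathbb{N}$-multiples gives $\add(\mathbb{N}tg)\subseteq\add(\mathbb{N}h)$, and the symmetric argument yields $\add(\mathbb{N}sh)\subseteq\add(\mathbb{N}g)$.

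The most delicate point is the very first step: justifying that the integer expression $Mf=\sum m_i g_i$, whose summands are generically indecomposable elements appearing jointly in the generic decomposition of $tg$, is itself the generic decomposition of $Mf$. This relies on the openness of the locus of morphisms that decompose compatibly and on the Derksen--Fei framework for generic decomposition, whose uniqueness is then the key tool to match $\ind(tg)$ with $\ind(sh)$.
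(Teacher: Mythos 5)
Your overall strategy (take a common point, decompose it both through $\ind(tg)$ and through $\ind(sh)$, and invoke uniqueness of the generic decomposition) is the same as the paper's, but the specific step you yourself flag as delicate is where the argument breaks. The claim that $Mf=\sum_i m_i g_i$ ``is itself the generic decomposition of $Mf$'' is false in general: if some $g_i$ is wild (i.e.\ $2g_i\neq g_i\oplus g_i$), then $m_ig_i$ is again generically indecomposable for $m_i\ge 2$, so the generic decomposition of $Mf$ contains the single summand $m_ig_i$ rather than $m_i$ copies of $g_i$. Consequently uniqueness only identifies the tame summands of $tg$ and $sh$ on the nose, while the wild summands are matched merely up to positive rational proportionality, and your conclusion ``$\ind(tg)=\ind(sh)$ as sets'' can fail for \emph{every} choice of $t,s$. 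Concretely, take $g=w_1\oplus w_2$ and $h=w_1\oplus 2w_2$ with $w_1,w_2$ wild and compatible: then $\ind(tg)=\{tw_1,tw_2\}$ and $\ind(sh)=\{sw_1,2sw_2\}$ are never equal, although the hypothesis and the conclusion of the lemma hold. The same tameness assumption is hidden in your last identity $ksh=tg\oplus\bigoplus_i(kn'_i-n_i)g_i$: splitting $kn'_ig_i$ into $n_ig_i\oplus(kn'_i-n_i)g_i$ requires the generic $E$-invariant of $g_i$ with itself to vanish, i.e.\ tameness of $g_i$, so the $\add$-containments are not justified as written.

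The repair is to work only with the ray-level and summand-level information that survives the wild ambiguity, which is what the paper does: from the two compatible expressions for a common multiple of $f$ one deduces that each $c_ig_i$ is a direct summand of a suitable multiple of $h$ (for a wild summand one must take the multiple of $h$ depending on $i$, e.g.\ $d_jh_j$ is a direct summand of $d_jh$ but not of $mh$ for other $m$), hence $c_ig_i\in\add(\mathbb{N}h)$ and $\Cone\{\ind(g)\}\subseteq\Cone\{\ind(\mathbb{N}h)\}$, and symmetrically; the equality of open cones and the $\add$-statements then follow because cones only depend on the rays $\mathbb{R}^{>0}g_i$, not on the particular scalings. Your reduction via Theorem \ref{758672546350} and Lemma \ref{806138874129} and your lcm trick are fine, but as long as the argument passes through the literal equality $\ind(tg)=\ind(sh)$ it does not cover the wild case, which is precisely the case this paper is concerned with.
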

	\begin{proof}
		Based on Lemma \ref{806138874129}, without loss of generality, we may assume that there is a g-vector in $\Cone^{\circ}\{\ind(g)\}\cap \Cone^{\circ}\{\ind(h)\}$. Consider the generic decompositions $g=g_1\oplus\ldots\oplus g_s$ and $h=h_1\oplus\ldots\oplus h_t$. Then, there exist $c_i,d_i\in\mathbb{N}$, $1\le i \le s$ such that $c_1g_1\oplus\ldots\oplus c_sg_s=d_1h_1\oplus\ldots\oplus d_th_t$. Hence, for each $1\le i \le s$, $c_i g_i\in\add(\mathbb{N}h)$, and so $\Cone\{\ind(g)\}\subseteq\Cone\{\ind(\mathbb{N}h)\}$. Similarly, we can show that $\Cone\{\ind(h)\}\subseteq\Cone\{\ind(\mathbb{N}g)\}$.
	\end{proof}
	The following result refines the second assertion of \cite[Theorem 3.14]{AsIy24} (or Lemma \ref{206430023172}). Asai and Iyama established that for any g-vector $g$, we have $\Cone^{\circ}\{\ind(\mathbb{N}g)\}\subseteq [g]_{\TF}$.
	\begin{corollary}\label{342694140028}
		Let $g$, $h_1$ and $h_2$ be g-vectors. If $h_1\oplus h_2\in[g]_{\TF}^{\circ}$, then $\Cone^{\circ}\{h_1, h_2\}\subseteq [g]_{\TF}^{\circ}$. Therefore, $g\in[g]_{\TF}^{\circ}$ if and only if
		\[\Cone^{\circ}\{\ind(\mathbb{N}g)\}\subseteq  [g]^{\circ}_{\TF}.\]
	\end{corollary}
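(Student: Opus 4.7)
The plan is to reduce both assertions to Lemma~\ref{434203923932} via a single perturbation trick. I first note that $[g]_{\TF}$ is a convex set closed under positive scaling: convexity follows from Proposition~\ref{987463920193}, while scale-invariance is immediate since $\overline{\mathcal{T}}_{\theta}$ and $\overline{\mathcal{F}}_{\theta}$ depend only on the signs of $\langle\theta,[M]\rangle$. Hence $[g]_{\TF}^{\circ}$ is a convex cone and Lemma~\ref{434203923932} applies.

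For the first claim, I would suppose $h_1\oplus h_2\in [g]_{\TF}^{\circ}$ and fix $p=th_1+(1-t)h_2$ for some $t\in(0,1)$. Because $h_1\oplus h_2$ is a direct sum in the generic sense, $\ind(h_1)\cup\ind(h_2)\subseteq\ind(h_1\oplus h_2)$ and each of $h_1,h_2$ is a strictly positive combination of its own generic indecomposable summands; hence every positive real combination of $h_1$ and $h_2$ lies in $\Cone^{\circ}\{\ind(h_1\oplus h_2)\}\subseteq [h_1\oplus h_2]_{\TF}=[g]_{\TF}$ by Lemma~\ref{206430023172}. I would then extend the ray from $h_1+h_2$ through $p$ just past $p$ by setting $q=(1+\epsilon)p-\epsilon(h_1+h_2)$; the coefficients of $h_1$ and $h_2$ in $q$ are $t-\epsilon(1-t)$ and $(1-t)-\epsilon t$, both positive for small $\epsilon>0$, so $q\in [g]_{\TF}$. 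Applying Lemma~\ref{434203923932} with $\eta=h_1+h_2$ and $\gamma=q$ yields $(h_1+h_2,q)\subseteq [g]_{\TF}^{\circ}$, and since $p=\tfrac{\epsilon}{1+\epsilon}(h_1+h_2)+\tfrac{1}{1+\epsilon}q$ lies strictly between the endpoints by construction, $p\in [g]_{\TF}^{\circ}$.

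The second assertion follows by the same mechanism. The implication ``$\Leftarrow$'' is immediate, as $g\in\Cone^{\circ}\{\ind(g)\}\subseteq\Cone^{\circ}\{\ind(\mathbb{N}g)\}$. For ``$\Rightarrow$'', given $h\in\Cone^{\circ}\{\ind(tg)\}$ for some $t\in\mathbb{N}$, I would set $\gamma=(1+\epsilon)h-\epsilon tg$; expanding $h=\sum\lambda_i g_i$ and $tg=\sum c_i g_i$ over $\ind(tg)$ with $\lambda_i,c_i>0$, the coefficient of $g_i$ in $\gamma$ is $(1+\epsilon)\lambda_i-\epsilon c_i$, positive for small $\epsilon>0$. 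Hence $\gamma\in\Cone^{\circ}\{\ind(tg)\}\subseteq [g]_{\TF}$ by Lemma~\ref{206430023172}, and scale-invariance of the interior gives $tg\in [g]_{\TF}^{\circ}$; Lemma~\ref{434203923932} then yields $(tg,\gamma)\subseteq [g]_{\TF}^{\circ}$, with $h$ again a strict convex combination of the endpoints. The main obstacle throughout is ensuring that the small extension past the target still lies in $[g]_{\TF}$; this is handled uniformly by tracking positivity of coefficients under perturbation and invoking the algebraic inclusion from Lemma~\ref{206430023172}.
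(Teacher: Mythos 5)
Your proof is correct. It uses the same basic toolkit as the paper — Lemma \ref{206430023172} to place the relevant positive combinations (of $h_1,h_2$, resp.\ of $\ind(tg)$) inside $[g]_{\TF}$, plus the elementary convex-cone facts of Section 4.1 — but the passage from ``lies in $[g]_{\TF}$'' to ``lies in $[g]_{\TF}^{\circ}$'' is executed differently: the paper argues by contradiction, assuming a point of $(h_1,h_2)$ sits on $\partial[g]_{\TF}$ and invoking Lemma \ref{075288509646}, whereas you overshoot the target point $p$ along the ray from the interior point $h_1\oplus h_2$, check that $q=(1+\epsilon)p-\epsilon(h_1+h_2)$ still has strictly positive coefficients and hence lies in $[g]_{\TF}$, and then apply Lemma \ref{434203923932} directly to the segment $(h_1+h_2,q)\ni p$. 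Since Lemma \ref{075288509646} is itself deduced from Lemma \ref{434203923932}, the two routes are close cousins, but yours is more direct, and it has the additional merit of spelling out the second clause (``Therefore, \dots'') by the same overshoot-and-rescale device, which the paper's proof leaves implicit; note that deriving that clause from the first sentence alone is not entirely automatic, so this is a genuine completion rather than padding. Two small points to tighten: what Lemma \ref{434203923932} requires is that $[g]_{\TF}$ itself (not merely its interior) be a convex cone, which you did establish via Proposition \ref{987463920193} and scale-invariance; and for the inclusion $(h_1,h_2)\subseteq\Cone^{\circ}\{\ind(h_1\oplus h_2)\}$ the containment you actually need is $\ind(h_1\oplus h_2)\subseteq\ind(h_1)\cup\ind(h_2)$, so that every generator of the open cone receives a strictly positive coefficient — harmless here, since the notation $h_1\oplus h_2$ means precisely that the generic decomposition of $h_1+h_2$ is the concatenation of those of $h_1$ and $h_2$, giving equality of the two sets.
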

	\begin{proof}
		By Lemma \ref{206430023172}, we know that $h_1\oplus h_2$ and $th_1\oplus sh_2$ are $\TF$-equivalent, for all $t,s\in\mathbb{N}$. Hence, $t/(t+s)h_1+s/(t+s)h_2\in[g]_{\TF}$. Thus, $(h_1,h_2)\subseteq[g]_{\TF}$. On the other hand, by Lemma \ref{075288509646}, $\theta\in (h_1,h_2)\cap\partial[g]_{\TF}$ implies that $(\theta, h_2)\cap\overline{[g]_{\TF}}=\emptyset$. Therefore, $(h_1,h_2)\cap\partial[g]_{\TF}=\emptyset$. Thus, $(h_1,h_2)\subseteq [g]_{\TF}^{\circ}$. Hence, the assertions follow.
	\end{proof}
	\begin{theorem}\label{310913388628}
		For any two arbitrary g-vectors $g$ and $h$, if $\ind(g)=\ind(h)$, then they are $\TF$-equivalent. The converse holds if both are tame.
	\end{theorem}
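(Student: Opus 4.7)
The forward direction is immediate from Lemma \ref{206430023172}. The formulas $\overline{\mathcal{T}}_g = \bigcap_{g_i \in \ind(g)} \overline{\mathcal{T}}_{g_i}$ and $\overline{\mathcal{F}}_g = \bigcap_{g_i \in \ind(g)} \overline{\mathcal{F}}_{g_i}$ depend only on the underlying set $\ind(g)$ (repeated summands in the generic decomposition contribute the same term to the intersection), so the hypothesis $\ind(g) = \ind(h)$ forces $\overline{\mathcal{T}}_g = \overline{\mathcal{T}}_h$ and $\overline{\mathcal{F}}_g = \overline{\mathcal{F}}_h$, i.e.\ $[g]_{\TF} = [h]_{\TF}$.

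For the converse, assume $g$ and $h$ are tame with $[g]_{\TF} = [h]_{\TF}$, with generic decompositions $g = g_1 \oplus \cdots \oplus g_s$ and $h = h_1 \oplus \cdots \oplus h_t$. The first step is to use tameness to simplify the ambient cones. Tameness of $g$, via $2g = g \oplus g$, forces the summands $g_i$ to be pairwise distinct and each $g_i$ to itself be tame: any repetition or wild summand would produce a larger generic decomposition of $2g$. Consequently $\ind(\mathbb{N}g) = \ind(g)$, the cone $\Cone^{\circ}\{\ind(\mathbb{N}g)\} = \Cone^{\circ}\{\ind(g)\}$ is genuinely open, and $g = g_1 + \cdots + g_s$ is a g-vector lying in this open cone. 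Analogous statements hold for $h$.

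The plan now is to identify the two simplicial cones $\Cone^{\circ}\{\ind(g)\}$ and $\Cone^{\circ}\{\ind(h)\}$; once this is done, Lemma \ref{072929369060} gives $\add(\mathbb{N}g) \subseteq \add(\mathbb{N}h)$ and vice versa, and matching extreme rays of the two simplicial cones yields $\ind(g) = \ind(h)$. To produce a common g-vector in both open cones, use Lemma \ref{076626606506}: since $h \in [g]_{\TF}$ is a g-vector and $[g]_{\TF} = [h]_{\TF}$, there exists $n \in \mathbb{N}$ with $g + nh = g \oplus nh$ as generic decompositions, whence $\ind(g + nh) = \ind(g) \cup \ind(h)$. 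A direct check (using $2g = g \oplus g$ and $2h = h \oplus h$) shows $g + nh$ is again tame. By convexity of TF-classes (Proposition \ref{987463920193}) combined with the cone property, $g + nh \in [g]_{\TF}$; thus $g + nh$ is a tame g-vector in $[g]_{\TF}$ whose generic decomposition contains all of $\ind(g) \cup \ind(h)$. By the forward direction this is consistent with $[g + nh]_{\TF} = [g]_{\TF}$.

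The decisive last step is to rule out any strict enlargement when passing from $\Cone\{\ind(g)\}$ to $\Cone\{\ind(g) \cup \ind(h)\}$. Here I expect to invoke Corollary \ref{342694140028}, which places $\Cone^{\circ}\{\ind(\mathbb{N}(g+nh))\}$ inside $[g+nh]_{\TF}^{\circ} = [g]_{\TF}^{\circ}$; combined with Lemma \ref{075288509646}, any extra extreme ray $h_j \notin \Cone\{\ind(g)\}$ would give a line segment from a point of $\Cone^{\circ}\{\ind(g)\}$ through $g$ to $h_j$ meeting the boundary of $[g]_{\TF}$, contradicting openness of $\Cone^{\circ}\{\ind(g+nh)\}$ inside $[g]_{\TF}^{\circ}$. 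This forces $\ind(h) \subseteq \ind(g)$, and symmetry concludes the proof. The main obstacle is exactly this last convex-geometric step: its success is precisely what distinguishes the tame case from the wild one (where the converse fails, cf.\ \cite[Example 5.9]{AsIy24}), since the rigid open-cone behaviour $\Cone^{\circ}\{\ind(\mathbb{N}g)\} = \Cone^{\circ}\{\ind(g)\}$ is unavailable in the wild setting.
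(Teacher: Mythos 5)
Your forward direction is fine and matches the paper in substance (the paper phrases it via $\Cone^{\circ}\{\ind(g)\}=\Cone^{\circ}\{\ind(h)\}$ lying in both classes, you via the intersection formulas of Lemma \ref{206430023172}; these are interchangeable). The converse, however, has a genuine gap exactly at the step you yourself flag as the ``decisive last step.'' Producing $n$ with $g+nh=g\oplus nh$ (via Lemma \ref{076626606506} and tameness) only gives $\ind(g+nh)=\ind(g)\cup\ind(h)$; it does not by itself force $\ind(h)\subseteq\ind(g)$, and the convex-geometric contradiction you sketch does not materialize. First, your appeal to Corollary \ref{342694140028} to place $\Cone^{\circ}\{\ind(\mathbb{N}(g+nh))\}$ inside $[g+nh]_{\TF}^{\circ}$ requires knowing $g+nh\in[g+nh]_{\TF}^{\circ}$, which is not available (whether a g-vector lies in the interior of its own $\TF$-class is precisely one of the delicate points of this paper). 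Second, each $h_j\in\ind(h)$ already lies in $\overline{[g]_{\TF}}$ (Lemmas \ref{206430023172} and \ref{532756467772}), so by Lemma \ref{434203923932} a segment from any interior point of $[g]_{\TF}$ toward $h_j$ stays in the interior; no boundary crossing is forced even if $h_j\notin\Cone\{\ind(g)\}$, so there is nothing to contradict. Note also that you cannot substitute Theorem \ref{376103330122} ($[g]_{\TF}=\Cone^{\circ}\{\ind(g)\}$ for tame $g$) here, since its proof in the paper uses the very theorem you are proving.

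The missing idea is a subtraction argument. The paper uses Lemma \ref{076626606506} to get $g\in D_h$ (i.e.\ some $b\in\Hom_{\Lambda}(h)$ with $\mathcal{T}_b\subseteq\overline{\mathcal{T}}_g=\overline{\mathcal{T}}_h$ and $\mathcal{F}_b\subseteq\overline{\mathcal{F}}_g=\overline{\mathcal{F}}_h$), and then Lemma \ref{284590845114} to replace $g$ by $tg-h$: for some $t$, $\mathcal{T}_b\subseteq\overline{\mathcal{T}}_{tg-h}$ and $\mathcal{F}_b\subseteq\overline{\mathcal{F}}_{tg-h}$, hence $tg-h\in D_h$ and $s(tg-h)\oplus sh=stg$ for some $s$. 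Since $g$ is tame, $\ind(stg)=\ind(g)$, so uniqueness of generic decomposition yields $\ind(h)\subseteq\ind(g)$, and symmetry finishes. Your construction $g+nh=g\oplus nh$ only \emph{adds}; without Lemma \ref{284590845114} (or Proposition \ref{144840550539}) you never exhibit $h$ as a generic direct summand of a multiple of $g$, which is what the conclusion needs. A further small inaccuracy: tameness of $g$ does not force the summands $g_i$ to be pairwise distinct (e.g.\ $g_1\oplus g_1$ with $g_1$ tame indecomposable is tame); this is harmless for your argument but should be dropped.
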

	\begin{proof}
		If $\ind(g)=\ind(h)$, then by Lemma \ref{206430023172},
		\[[g]_{\TF}\supseteq\Cone^{\circ}\{\ind(g)\}=\Cone^{\circ}\{\ind(h)\}\subseteq [h]_{\TF}.\]
		So we have $[g]_{\TF}\cap [h]_{\TF}\neq\emptyset$. Hence, $[g]_{\TF}=[h]_{\TF}$.
		
		Conversely, let $g$ and $h$ be tame and assume that $[g]_{\TF}=[h]_{\TF}$. Since $g$ is tame, by Lemma \ref{076626606506}, we have $[g]_{\TF}\cap K_0(\proj\Lambda)\subseteq D_{h}$. Applying Lemma \ref{076626606506} again, one can find $b\in\Hom_{\Lambda}(h)$ such that $\mathcal{F}_b\subseteq\overline{\mathcal{F}}_g= \overline{\mathcal{F}}_h$ and $\mathcal{T}_b\subseteq\overline{\mathcal{T}}_g= \overline{\mathcal{T}}_h$. Hence, by Lemma \ref{284590845114}, for some $t\in\mathbb{N}$, $\mathcal{F}_b\subseteq\overline{\mathcal{F}}_{tg-h}$ and $\mathcal{T}_b\subseteq\overline{\mathcal{T}}_{tg-h}$. Therefore, $tg-h\in D_{h}$, and consequently, $s(tg-h)\oplus sh=stg$ for some $s\in\mathbb{N}$. This implies that $\ind(h)\subseteq\ind(g)$. Similarly, we can show $\ind(g)\subseteq\ind(h)$.
	\end{proof}
	\begin{remark}\label{050599086025}
		Let $g$ be a g-vector. Then, there exist finitely many proper subsets $H_1$, $H_2$, $\ldots$, and $H_s$ of $\ind(g)$ such that
		\[\begin{array}{cc}
			\partial \Cone\{\ind(g)\}= \bigcup\limits_{i=0}^{s}\Cone^{\circ}\{H_i\}, &
			\Cone^{\circ}\{H_i\}\cap\Cone^{\circ}\{H_j\}=\emptyset,\forall 1\le i\neq j\le s,
		\end{array}\]
		where we set $H_0=\emptyset$ and $\Cone^{\circ}\{H_0\}:=\{0\}$.
		
		Consider an arbitrary non-zero vector $\theta$ in $\partial\Cone\{\ind(\mathbb{N}g)\}$. Then  by Theorem \ref{758672546350}, there exists $t\in\mathbb{N}$ such that $\theta\in \partial\Cone\{\ind(tg)\}$. Therefore, $\theta\in\Cone^{\circ}\{H\}$, for some non-empty $H\subsetneq\ind(tg)$. Moreover, since $\langle H\rangle_{\mathbb{R}}\subseteq\langle \ind(\mathbb{N}g)\rangle_{\mathbb{R}}$, if the intersection $\Cone^{\circ}\{H\}\cap\Cone^{\circ}\{\ind(\mathbb{N}g)\}$ is non-empty, then it contains a g-vector. Thus, by Lemma \ref{072929369060}, we can conclude that
		\[\Cone^{\circ}\{H\}\subseteq \partial\Cone\{\ind(\mathbb{N}g)\}.\]
	\end{remark}
	\begin{lemma}\label{781520404735}
		Let $g$ be a g-vector which is not generically indecomposable. Then, $\partial\Cone\{\ind(g)\}\cap[g]_{\TF}\neq\emptyset$ implies that $\tame(\ind(\mathbb{N}g))\neq\emptyset$.
	\end{lemma}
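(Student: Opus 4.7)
The plan is to pull a boundary point back to a canonical boundary g-vector $h_0$ that is $\TF$-equivalent to $g$, cascade the $D$-set relation of Lemma~\ref{076626606506} from $h_0$ up to $g$, and then unpack the resulting self-identity $(1+s)g = g \oplus sg$ summand-wise to force each generic summand of $g$ to be tame.

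Concretely, I would first pick $\theta \in \partial\Cone\{\ind(g)\}\cap[g]_{\TF}$ from the hypothesis and use Remark~\ref{050599086025} to produce a nonempty proper subset $H \subsetneq \ind(g)$ with $\theta \in \Cone^{\circ}\{H\}$, noting that $H$ must be nonempty (otherwise $\theta = 0$ forces $g = 0$, contradicting the setting). Setting $h_0 := \sum_{h \in H} h$, the pairwise $e$-orthogonality inherited from the generic decomposition of $g$, together with each $h$ appearing once in the sum, makes $\bigoplus_{h \in H} h$ the generic decomposition of $h_0$, so $\ind(h_0) = H$ and $h_0 \in \Cone^{\circ}\{\ind(h_0)\} \subseteq [h_0]_{\TF}$ by Lemma~\ref{206430023172}. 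Since $\theta \in \Cone^{\circ}\{H\} \cap [g]_{\TF}$ also lies in $[h_0]_{\TF}$, I conclude $[h_0]_{\TF} = [g]_{\TF}$.

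Next, I apply Lemma~\ref{076626606506}: for a generic $a \in \Hom_\Lambda(g)$, $\Coker a \in \overline{\mathcal{T}}_g = \overline{\mathcal{T}}_{h_0}$ and $\Ker \nu a \in \overline{\mathcal{F}}_g = \overline{\mathcal{F}}_{h_0}$, giving $h_0 \in D_g$; the ``in this case'' clause then yields $g \in \overline{[h_0]_{\TF}} \cap K_0(\proj\Lambda) \subseteq D_g$, so $g \in D_g$. This produces $s \in \mathbb{N}$ with $(1+s) g = g \oplus sg$ as generic decompositions. If the generic decomposition of $g$ has a summand of multiplicity $\geq 2$, that summand is tame by definition and we are done; otherwise, writing $g = g_1 \oplus \cdots \oplus g_m$ with distinct $g_i$, the pairwise $e$-orthogonality of the $g_i$'s splits the identity coordinatewise into $(1+s) g_i = g_i \oplus s g_i$ for every $i$. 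Since each $g_i$ is generically indecomposable, this forces $e(g_i, g_i) = 0$, i.e., $g_i$ is tame. Hence $\emptyset \neq \ind(g) \subseteq \tame(\ind(g)) \subseteq \tame(\ind(\mathbb{N} g))$.

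The main obstacle is the assertion in the third paragraph that for a generic $a \in \Hom_\Lambda(g)$ one has $\Coker a \in \overline{\mathcal{T}}_g$ and $\Ker \nu a \in \overline{\mathcal{F}}_g$; this is built into the generic-decomposition framework underlying Proposition~\ref{341408431998} (via the union formula $\overline{\mathcal{T}}_g = \bigcup_{a' \in A} \overline{\mathcal{T}}_{a'}$), but must be invoked with care since $\Coker a \in \overline{\mathcal{T}}_a$ does not hold for arbitrary $a$. An alternative route that sidesteps this is to apply the $e$-form argument of Derksen--Fei directly to the compatibility of the generic decompositions of $g$ and $s h_0$ forced by $[h_0]_{\TF} = [g]_{\TF}$: bilinearity and non-negativity of $e$ then collapse the relevant identity to $\sum_{g_i \in H} e(g_i, g_i) = 0$, yielding tameness of every $g_i \in H$ and the same conclusion.
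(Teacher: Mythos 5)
Your setup (choosing $\theta$ in the boundary intersection, extracting $H\subsetneq\ind(g)$ via Remark \ref{050599086025}, forming $h_0=\sum_{h\in H}h$ and concluding $[h_0]_{\TF}=[g]_{\TF}$ through Lemma \ref{206430023172}) is sound and parallels the paper. The gap is exactly the step you flag as the ``main obstacle'', and it is not repairable as stated: for a generic $a\in\Hom_{\Lambda}(g)$ it is in general \emph{false} that $\Coker a\in\overline{\mathcal{T}}_g$ and $\Ker\nu a\in\overline{\mathcal{F}}_g$. For instance, for a wild hereditary algebra and $g=g^M$ with $M$ a generic module whose dimension vector $d$ satisfies $\langle d,d\rangle<0$, one has $\langle g,[M]\rangle<0$ while $M=\Coker a$ is a quotient of itself, so $\Coker a\notin\overline{\mathcal{T}}_g$; the union formula of Proposition \ref{341408431998} does not help, since $\Coker a\in\overline{\mathcal{T}}_a$ already requires the $\tau$-rigidity-type vanishing that fails precisely in the wild case. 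Consequently ``$h_0\in D_g$'' is unjustified (and false whenever $H$ contains a wild summand, since $g\oplus sh_0$ would need $e$-orthogonality against summands of $g$ that also occur in $h_0$), and the chain $h_0\in D_g\Rightarrow g\in D_g\Rightarrow$ every $g_i$ tame proves far more than the lemma claims, which is a sign the mechanism is pointing the wrong way: the hypothesis should only yield tameness of summands \emph{outside} $H$, not of all of $\ind(g)$. Your fallback route has the same defect: $[h_0]_{\TF}=[g]_{\TF}$ does not force any compatibility between the generic decompositions of $g$ and $sh_0$ — that implication is exactly what fails for wild g-vectors (compare Theorem \ref{310913388628}, where the converse needs tameness), so the identity you want to ``collapse'' to $\sum_{g_i\in H}e(g_i,g_i)=0$ is not available.

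The paper's proof uses Lemma \ref{076626606506} in the opposite configuration, where only orthogonality between \emph{distinct} summands of $g$ is needed. Fix $h'\in\ind(g)\setminus H$. Since a suitable multiple of the boundary point decomposes generically into members of $H$, all of which are $E$-orthogonal to $h'$, one gets $\theta\in D_{h'}$ (equivalently $h_0\in D_{h'}$); no self-orthogonality $e(h,h)=0$ is ever required. The ``in this case'' clause of Lemma \ref{076626606506} then gives $\overline{[\theta]_{\TF}}\cap K_0(\proj\Lambda)\subseteq D_{h'}$, and since $[\theta]_{\TF}=[g]_{\TF}$ and $\ind(g)\subseteq\overline{[g]_{\TF}}$ (Lemmas \ref{206430023172} and \ref{532756467772}), in particular $h'\in D_{h'}$, whence $th'$ is tame for some $t\in\mathbb{N}$ and $\tame(\ind(\mathbb{N}g))\neq\emptyset$. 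So you had the right two ingredients ($[h_0]_{\TF}=[g]_{\TF}$ and Lemma \ref{076626606506}), but they must be applied with $D_{h'}$ for $h'$ outside $H$, not with $D_g$.
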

	\begin{proof}
		Consider a g-vector $h$ in $\partial\Cone\{\ind(g)\}\cap[g]_{\TF}$. Then  by the notation of Remark \ref{050599086025}, $h$ belongs to $\Cone^{\circ}\{H_j\}$, for some $1\le j\le s$. Consequently, by Lemma \ref{076626606506}, for $h'\in\ind(g)\setminus H_j$, we have $h\in D_{h'}$. Therefore, $\overline{[g]_{\TF}}\subseteq D_{h'}$. On the other hand, by Lemma \ref{206430023172}, $\ind(g)\subseteq\overline{[g]_{\TF}}$. Hence, $h'\in D_{h'}$ and so $th'$ is tame, for some $t\in\mathbb{N}$.
	\end{proof}
	\subsection{Reduced {{g}}-vectors}
	\begin{lemma}\label{399829457548}
		Let $g$ be a g-vector.
		Then there exists $H\subseteq\ind(g)$ such that for all $h\in H$, $h\notin \Cone\{\ind(\mathbb{N}(H\setminus\{h\}))\}$ and
		\[\Cone^{\circ}\{\ind(\mathbb{N}H)\}=\Cone^{\circ}\{\ind(\mathbb{N}g)\}.\]
	\end{lemma}
	\begin{proof}
		Assume that $H_0:=\ind(g)=\{g_1,g_2,\ldots, g_s\}$. If $g_1\in \Cone\{\ind(\mathbb{N}(H_0\setminus\{g_1\}))\}$, then $g\in \Cone^{\circ}\{\ind(\mathbb{N}(H_0\setminus\{g_1\}))\}\cap\Cone^{\circ}\{\ind(\mathbb{N}g)\}$. Hence, by Lemma \ref{072929369060}, removing $g_1$ from $H_0$ does not affect the cone. In this case, we set $H_1=H_0\setminus\{g_1\}$; otherwise, $H_1=H_0$. Next, if $g_2\in \Cone\{\ind(\mathbb{N}(H_1\setminus\{g_2\}))\}$, then similarly to the previous step, it follows that removing $g_2$ from $H_1$ does not affect the cone. In this case, we set $H_2=H_1\setminus\{g_2\}$; otherwise, $H_2=H_1$. Applying this procedure successively to all elements of $\ind(g)$, we can find $H=H_s\subseteq\ind(g)$ as required.
	\end{proof}
	\begin{definition}\label{841845155042}
		Let $g$ be a non-zero g-vector. Then, we say $g$ is \textit{reduced}, provided that for any $h\in \ind(g)$, $h\notin \Cone\{\ind(\mathbb{N}(g-h))\}$.
	\end{definition}
		Any generically indecomposable g-vector is reduced. Moreover, if $g$ satisfies the ray condition (see \cite[Definition 2.20]{HY25b}) and there are no duplicate generically indecomposable g-vectors in its generic decomposition, then $g$ is reduced.
	\begin{remark}
		Let $g$ be a g-vector. Then  by Lemma \ref{399829457548}, there exists a reduced version of $g$, that is a direct summand of $g$ and belongs to $\Cone^{\circ}\{\ind(g)\}$.
	\end{remark}
	\begin{namedthm}{Lemma-definition}\label{090299082299}
		Consider a g-vector $g$.
		Let $g^{(r)}$ be a reduced g-vector in $\Cone^{\circ}\{\ind(\mathbb{N}g)\}$. Then, $\Cone^{\circ}\{\ind(\mathbb{N}g^{(r)})\}=\Cone^{\circ}\{\ind(\mathbb{N}g)\}$. We call $g^{(r)}$ a \textit{reduced version} of $g$.
	\end{namedthm}
	\begin{proof}
		It is a direct consequence of Lemmas \ref{072929369060} and \ref{399829457548}.
	\end{proof}
	\begin{corollary}
		Let $g$ be a g-vector. Then, $g\in[g]_{\TF}^{\circ}$ if and only if a reduced version of $g$ is in $[g]_{\TF}^{\circ}$.
	\end{corollary}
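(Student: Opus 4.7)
The plan is to reduce this corollary to Corollary \ref{342694140028}, applied both to $g$ and to its reduced version $g^{(r)}$, together with the equality of open cones recorded in Lemma-Definition \ref{090299082299}.

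First, I would observe the common preliminary fact that $g^{(r)}$ and $g$ are $\TF$-equivalent. Indeed, by the definition of a reduced version, $g^{(r)}\in\Cone^{\circ}\{\ind(\mathbb{N}g)\}$, and Lemma \ref{206430023172} applied to a suitable multiple $tg$ gives $\Cone^{\circ}\{\ind(tg)\}\subseteq [tg]_{\TF}=[g]_{\TF}$; hence $g^{(r)}\in[g]_{\TF}$, so $[g^{(r)}]_{\TF}=[g]_{\TF}$. This lets me freely interchange $[g]_{\TF}^{\circ}$ and $[g^{(r)}]_{\TF}^{\circ}$ throughout.

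For the forward direction, assume $g\in[g]_{\TF}^{\circ}$. By Corollary \ref{342694140028}, this is equivalent to $\Cone^{\circ}\{\ind(\mathbb{N}g)\}\subseteq [g]_{\TF}^{\circ}$. Since $g^{(r)}\in\Cone^{\circ}\{\ind(\mathbb{N}g)\}$, we get $g^{(r)}\in [g]_{\TF}^{\circ}=[g^{(r)}]_{\TF}^{\circ}$, as required.

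For the converse, assume $g^{(r)}\in[g^{(r)}]_{\TF}^{\circ}=[g]_{\TF}^{\circ}$. Applying Corollary \ref{342694140028} now to the g-vector $g^{(r)}$ yields $\Cone^{\circ}\{\ind(\mathbb{N}g^{(r)})\}\subseteq [g^{(r)}]_{\TF}^{\circ}=[g]_{\TF}^{\circ}$, and by Lemma-Definition \ref{090299082299} this cone coincides with $\Cone^{\circ}\{\ind(\mathbb{N}g)\}$. It remains to check that $g$ itself belongs to this open cone: writing the generic decomposition $g=g_1\oplus g_2\oplus\cdots\oplus g_s$ and grouping repeated summands yields $g=\sum_{h\in\ind(g)}n_h h$ with each multiplicity $n_h\in\mathbb{N}^{>0}$, so $g\in\Cone^{\circ}\{\ind(g)\}\subseteq\Cone^{\circ}\{\ind(\mathbb{N}g)\}$. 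Therefore $g\in [g]_{\TF}^{\circ}$. I do not anticipate a genuine obstacle here; the only delicate point is recognising that $g$ lies in the relative interior of $\Cone\{\ind(g)\}$, which is immediate from the definition of the generic decomposition.
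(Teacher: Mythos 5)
Your proof is correct and follows essentially the same route as the paper: both arguments combine Lemma-Definition \ref{090299082299} (equality of the open cones of $g$ and $g^{(r)}$) with Corollary \ref{342694140028}. The extra details you supply (the $\TF$-equivalence of $g$ and $g^{(r)}$, and the observation that $g\in\Cone^{\circ}\{\ind(g)\}$) are exactly the steps the paper leaves implicit, and they are verified correctly.
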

	\begin{proof}
		Let $g^{(r)}$ be a reduced version of $g$. Then  by Lemma \ref{090299082299},
		\[\Cone^{\circ}\{\ind(\mathbb{N}g)\}=\Cone^{\circ}\{\ind(\mathbb{N}g^{(r)})\}.\]
		Thus, the assertion follows from Corollary \ref{342694140028}.
	\end{proof}
	
	\begin{proposition}\label{700686515031}
		Let $g$ be a reduced g-vector. Then, $\ind(g)$ is linearly independent.
	\end{proposition}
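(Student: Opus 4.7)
I argue by contrapositive. Assume $\ind(g)=\{h_1,\dots,h_s\}$ is linearly dependent and aim to contradict Definition~\ref{841845155042}, i.e.\ to exhibit some $h_j$ with $h_j\in\Cone\{\ind(\mathbb{N}(g-h_j))\}$. The key idea is to produce a single g-vector belonging to both $\Cone^{\circ}\{\ind(\mathbb{N}g)\}$ and $\Cone^{\circ}\{\ind(\mathbb{N}(g-h_j))\}$ for some $j$, so that Lemma~\ref{072929369060} forces these open cones, and hence their closures, to coincide; since $h_j\in\ind(\mathbb{N}g)$, this would yield $h_j\in\Cone\{\ind(\mathbb{N}(g-h_j))\}$, the desired contradiction.

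Clearing denominators in a nontrivial relation $\sum c_ih_i=0$ and letting $k^{\ast}$ attain $\max_i|c_i|$ with $c_{k^{\ast}}\neq 0$ (say $c_{k^{\ast}}<0$), for any positive integer $t$ divisible by $|c_{k^{\ast}}|$, set $m:=t/|c_{k^{\ast}}|$; then
\[
tg \;=\; \sum_{i=1}^{s}\bigl(t+mc_i\bigr)h_i \;=\; \sum_{i\neq k^{\ast}}\bigl(t+mc_i\bigr)h_i,
\]
using the cancellation $t+mc_{k^{\ast}}=0$. When $k^{\ast}$ is the unique maximizer, the remaining coefficients are strictly positive, so this expression exhibits $tg$ as a strict positive combination of $\{h_i:i\neq k^{\ast}\}=\ind(g-h_{k^{\ast}})$, placing $tg$ in $\Cone^{\circ}\{\ind(g-h_{k^{\ast}})\}\subseteq\Cone^{\circ}\{\ind(\mathbb{N}(g-h_{k^{\ast}}))\}$. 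Combined with $tg\in\Cone^{\circ}\{\ind(tg)\}\subseteq\Cone^{\circ}\{\ind(\mathbb{N}g)\}$ (Lemma~\ref{206430023172}), the g-vector $tg$ sits in the common intersection, and Lemma~\ref{072929369060} closes the argument.

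\textbf{Main obstacle.} Two technical issues need care. First, if several indices tie for $\max|c_i|$, the simple choice of $k^{\ast}$ produces a nonnegative combination that is not strictly positive, so $tg$ could lie on the boundary rather than in the relative interior of $\Cone\{\ind(g-h_{k^{\ast}})\}$; this can be handled by perturbing the relation within the subspace of linear dependencies (which has positive dimension when $\ind(g)$ is linearly dependent), or by iterating the argument after collapsing the tied maximizers. Second, and more delicate, one must confirm that a strict positive combination of $\ind(g-h_{k^{\ast}})$ actually lands in $\Cone^{\circ}\{\ind(\mathbb{N}(g-h_{k^{\ast}}))\}$ rather than in a lower-dimensional slice of it; the $\mathbb{N}$-multiple formulation together with Theorem~\ref{758672546350}, which identifies $\Cone\{\ind(\mathbb{N}\cdot)\}^{\circ}=\Cone^{\circ}\{\ind(\mathbb{N}\cdot)\}$, is the right tool here, guaranteeing that strict positive combinations of $\ind(g-h_{k^{\ast}})$ indeed land in the topological interior of the full $\mathbb{N}$-multiple cone and so make Lemma~\ref{072929369060} applicable.
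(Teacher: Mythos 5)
Your route through Lemma \ref{072929369060} is genuinely different from the paper's proof, which is purely combinatorial: there one splits an integral dependence into two nonnegative sides $\sum a_ig_i=\sum b_ig_i$ with disjoint supports, reads both sides as direct sums, and concludes $a_jg_j\in\add(\mathbb{N}(g-g_j))$, contradicting reducedness, with no cones or topology. Your version, however, has a genuine gap exactly at the step you flag: the claim that for a suitable single index $k^{\ast}$ some multiple $tg$ is a \emph{strictly} positive combination of $\ind(g)\setminus\{h_{k^{\ast}}\}$. This is not merely a tie-breaking nuisance. Take the dependence $h_1+h_2-h_3-h_4=0$ with the space of dependencies one-dimensional: every representation of $tg$ is of the form $d=(t+a,\,t+a,\,t-a,\,t-a,\,t,\dots,t)$, so forcing any single coordinate to vanish forces a second one to vanish as well; hence for \emph{no} choice of $k^{\ast}$ does $tg$ lie in $\Cone^{\circ}\{\ind(g-h_{k^{\ast}})\}$, and neither of your fixes applies (a one-dimensional dependency space admits no perturbation beyond rescaling, and ``collapsing the tied maximizers'' is not an argument). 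The repair is to give up on deleting exactly one summand: move along the dependency line until the coefficient vector first leaves the positive orthant, obtaining, after clearing denominators, $Ntg=\sum_{i\in H'}d_ih_i$ with $d_i>0$ and $\emptyset\neq H'\subsetneq\ind(g)$ (nonemptiness uses $g\neq0$). Then $Ntg\in\Cone^{\circ}\{\ind(\mathbb{N}g)\}\cap\Cone^{\circ}\{\ind(\mathbb{N}h')\}$ for $h':=\sum_{i\in H'}h_i$, because a sub-sum of a generic decomposition is its own generic decomposition, so Lemma \ref{072929369060} gives $\add(\mathbb{N}ug)\subseteq\add(\mathbb{N}h')$ for some $u$; for any dropped index $j\notin H'$ this puts $h_j\in\Cone\{\ind(\mathbb{N}h')\}\subseteq\Cone\{\ind(\mathbb{N}(g-h_j))\}$, since $h'$ is a direct summand of $g-h_j$, contradicting Definition \ref{841845155042}.

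Two further points. First, do not pass to closures: equality of the open cones only yields $h_j\in\overline{\Cone\{\ind(\mathbb{N}(g-h_j))\}}$, whereas reducedness forbids membership in the cone itself, and closedness of $\Cone\{\ind(\mathbb{N}\cdot)\}$ is only available after Theorem \ref{076273627938}, which comes later in the paper, so invoking it here risks circularity; the $\add$-containments in the second half of Lemma \ref{072929369060} give the honest membership, as above. Second, your identifications $tg=\sum_i t\,h_i$ and $\ind(g-h_{k^{\ast}})=\{h_i: i\neq k^{\ast}\}$ silently assume every element of $\ind(g)$ occurs with multiplicity one in $g$; this does hold for reduced $g$ (a repeated summand $h$ satisfies $h\in\ind(g-h)$), but it should be stated. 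Your ``second obstacle'' is vacuous: by the paper's definition $\Cone^{\circ}\{\ind(\mathbb{N}h)\}=\bigcup_{t}\Cone^{\circ}\{\ind(th)\}$, so a strictly positive combination of $\ind(h)$ lies in it automatically, and Theorem \ref{758672546350} is not needed for that.
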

	\begin{proof}
		Let $g=g_1\oplus g_2\oplus\ldots\oplus g_s$ be the generic decomposition of $g$. Consider an equation 
		\[a_1 g_1+a_2 g_2+\ldots+a_s g_s=b_1 g_1+b_2 g_2+\ldots+b_s g_s\]
		where $a_i, b_i\in\mathbb{Z}^{\ge 0}$ and if $a_i\neq 0$, then $b_i=0$, for all $1\le i\le s$. We can regard the above equation as
		\[a_1 g_1\oplus a_2 g_2\oplus \ldots\oplus a_s g_s=b_1 g_1\oplus b_2 g_2\oplus\ldots\oplus b_s g_s.\]
		If there are $j$ and $l$ such that $a_j\neq 0$ and $b_l\neq 0$, then $a_j g_j\in\add(\mathbb{N}(g-g_j))$. The contradiction implies that all coefficients must be zero. 
	\end{proof}
	
	\begin{proposition}\label{661790071494}
		Let $g$ be a g-vector. Then, there exists a reduced tame g-vector $f\in\add(\mathbb{N}g)$ such that $\ind(f)=\tame(\ind(\mathbb{N}g))$. Assume that $f$ is a direct summand of $g$. Then, for each $t\in\mathbb{N}$, we can write 
		\begin{equation}\label{eq876140490630}
			tg=f_t\oplus h_t,
		\end{equation}
		where
		\begin{itemize}
			\item $f^{\oplus t}$ is a direct summand of $f_t$,
			\item $f_t\in\Cone^{\circ}\{\ind(f)\}$, and
			\item $h_t$ does not admit any direct summands contained in $\Cone\{\ind(f)\}$.
		\end{itemize}
		Moreover, if $f'_t\oplus h'_t\in[g]_{\TF}$, where $0\neq h'_t$ and $f'_t$ are direct summands of $h_t$ and $f_t$, respectively, then $h'_t=h_t$.
	\end{proposition}
	\begin{proof}
		From \cite[Theorem 2.30 and Proposition 2.27]{HY25b}, we know that $\tame(\ind(\mathbb{N}g))$ is linearly independent. Therefore, $\tame(\ind(\mathbb{N}g))$ has only finitely many members. Thus, there exists a tame g-vector $f'\in\add(\mathbb{N}g)$ such that
		$\ind(f')=\tame(\ind(\mathbb{N}g))$. By removing duplicate generically indecomposable g-vectors from the generic decomposition of $f'$, we obtain a reduced tame g-vector $f$.
		
		For $t\in\mathbb{N}$, consider the decomposition $tg=f_t\oplus h_t$, where $f_t$ is the maximum direct summand of $tg$ contained in $\Cone\{\ind(f)\}$. Since $f^{\oplus t}$ is a direct summand of $tg$, we conclude that $f^{\oplus t}$ is a direct summand of $f_t$ and so $f_t\in\Cone^{\circ}\{\ind(f)\}$.
		
		To prove the last part, consider the decomposition $h_t=h'_t\oplus h''_t$. If $h''_t\neq 0$,
		by Lemma \ref{076626606506}, we can show that $f'_t\oplus h'_t\in D_{h''_t}$, and so $\overline{[g]_{\TF}}\subseteq D_{h''_t}$. Therefore, $h''_t$ belongs to $D_{h''_t}$. Thus, $sh''_t$ is tame, for some $s\in\mathbb{N}$. Hence, $h''_t$ lies in $\Cone\{\ind(f)\}$. This leads to a contradiction.
	\end{proof}
	\begin{lemma}\label{833777223956}
		Let $g$ be a reduced g-vector and $|\ind(g)|\ge 2$. Then, for any non-empty subset $H\subsetneq\ind(g)$, 
		$\add(\mathbb{N}H)\subseteq\partial [g]_{\TF}$.
		Moreover, if $g\in[g]_{\TF}^{\circ}$, then
		\[\ind(g)\subseteq\partial[g]_{\TF}\setminus[g]_{\TF}.\]
	\end{lemma}
	\begin{proof}
		It follows from Lemma \ref{206430023172} that $\add(\mathbb{N}H)\subseteq\overline{[g]_{\TF}}$. Thus, it is enough to show that $\add(\mathbb{N}H)\cap [g]_{\TF}^{\circ}=\emptyset$.
		Let $g=g_1\oplus g_2\oplus\ldots\oplus g_{s\ge 2}$ be the generic decomposition of $g$. Take $g_j\in\ind(g)\setminus H$. Then  by Lemma \ref{076626606506}, $\add(\mathbb{N}H)\subseteq D_{g_j}$, and therefore, $\add(\mathbb{N}H)\cap[g]_{\TF}^{\circ}\neq\emptyset$ implies that $\overline{[g]_{\TF}}\subseteq D_{g_j}$. Consider $h\in \add(\mathbb{N}H)\cap[g]_{\TF}^{\circ}$. Then, since $g_j\in\overline{[g]_{\TF}}$ and $h\in [g]_{\TF}^{\circ}$, by Lemma \ref{434203923932}, $f=th-g_j\in [g]_{\TF}$, for some $t\in\mathbb{N}$. Thus,
		\[a(th-g_j)\oplus ag_j=ath\]
		for some $a\in\mathbb{N}$. Therefore, $ag_j\in\add(\mathbb{N}H)\subseteq\Cone\{\mathbb{N}H\}$. Since $g$ is reduced, this leads to a contradiction. 
		
		By the first assertion, we know that $\ind(g)\subseteq\partial[g]_{\TF}$. Let $g_1\in[g]_{\TF}$. Since $g_1\in D_{g-g_1}$, by Lemma \ref{076626606506}, $g\in D_{g-g_1}$.
		Therefore, by Proposition \ref{144840550539}, we have $tg_1-g\in D_{g-g_1}$, for some $t\in\mathbb{N}$. Thus, there exists $t'\in\mathbb{N}$ such that
		\[t'(tg_1-g)+(g-g_1)=t'(tg_1-g)\oplus(g-g_1).\]
		By Lemma \ref{206430023172}, this implies that all g-vectors in $\Cone^{\circ}\{tg_1-g,g-g_1\}$ are $\TF$-equivalent. On the other hand, by Lemma \ref{075288509646}, we know that \[(g_1,tg_1-g)\cap\overline{[g]_{\TF}}=\emptyset.\]
		However, both $g$ and $g_1$ belong to $\Cone^{\circ}\{tg_1-g,g-g_1\}\cap[g]_{\TF}$, which implies that $(g_1,tg_1-g)\subseteq [g]_{\TF}$. This leads to a contradiction. Therefore, we conclude that $g_1\notin[g]_{\TF}$. Similarly, we can prove that $g_i\notin [g]_{\TF}$, for all $1\le i \le s$.
	\end{proof}
	\section{Main results}
	This section presents our results regarding the cones of g-vectors and $\TF$-equivalence classes. We begin by demonstrating that Conjecture \ref{362256186221} holds for tame g-vectors. Additionally, we establish that the cones of g-vectors are rational and simplicial. Subsequently, we show that for any g-vector $g$, there exists $t\in\mathbb{N}$ such that $tg$ satisfies the ray condition. As a consequence, we prove that the interior of the $\TF$-equivalence class and the open cone of a given g-vector coincide if and only if they are of the same dimension.  Finally, we provide a necessary and sufficient condition for Conjecture \ref{362256186221} to hold.
	\subsection{Cones of g-vectors and $\TF$-equivalence classes}
	\begin{theorem}\label{376103330122}
		Let $g$ be a tame g-vector. Then
		\[[g]_{\TF}=\Cone^{\circ}\{\ind(g)\}.\]
	\end{theorem}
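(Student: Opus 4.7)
The inclusion $\Cone^{\circ}\{\ind(g)\} \subseteq [g]_{\TF}$ is immediate from Lemma \ref{206430023172}, so the task is to establish the reverse inclusion. My strategy is to reduce to the implication $(1)\Longrightarrow(2)$ of Theorem \ref{961838105897}. As a preliminary, I would show that for tame $g$ the generic decomposition is stable under positive multiples: starting from $2g = g\oplus g$, induction gives $ng = g\oplus g\oplus\cdots\oplus g$ (using $g\in D_{g}$ and Lemma \ref{076626606506} at each step), and each generically indecomposable summand of $g$ is itself tame — otherwise a wild $g_{i}$ would force the generic decomposition of $2g_{i}$ to differ from $g_{i}\oplus g_{i}$, contradicting the decomposition of $2g$. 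Hence $\ind(\mathbb{N}g) = \ind(g)$ and $\Cone^{\circ}\{\ind(\mathbb{N}g)\} = \Cone^{\circ}\{\ind(g)\}$. Replacing $g$ by a reduced version via Lemma-Definition \ref{090299082299} and invoking Proposition \ref{700686515031}, I may further assume $\ind(g) = \{g_{1},\dots,g_{m}\}$ is linearly independent, so $\dim_{\mathbb{R}}\langle\ind(\mathbb{N}g)\rangle_{\mathbb{R}} = m$.

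I would then verify condition $(1)$ of Theorem \ref{961838105897}, namely $\dim_{\mathbb{R}}\langle[g]_{\TF}\rangle_{\mathbb{R}} = m$. The inequality $\ge m$ is immediate from $\Cone^{\circ}\{\ind(g)\}\subseteq[g]_{\TF}$. For $\le m$, I would prove $[g]_{\TF}\subseteq\langle\ind(g)\rangle_{\mathbb{R}}$ as follows. For an integer $\theta\in[g]_{\TF}$, tameness of $g$ (so $g\in D_{g}$) combined with Lemma \ref{076626606506} yields $\theta\in D_{g}$, hence $g+s\theta = g\oplus s\theta$ for some $s\in\mathbb{N}$; the generic decomposition of the right-hand side is $\ind(g)\sqcup\ind(s\theta)$. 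The goal is then to argue that every element of $\ind(s\theta)$ lies in $\langle\ind(g)\rangle_{\mathbb{R}}$, after which Proposition \ref{987463920193} (convexity of $[g]_{\TF}$), together with density of rational (hence, after clearing denominators, integer) points in the convex cone $[g]_{\TF}$, extends the containment to every $\theta\in[g]_{\TF}$.

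Once condition $(1)$ holds, Theorem \ref{961838105897} provides $[g]_{\TF}^{\circ} = \Cone^{\circ}\{\ind(g)\}$. To upgrade $[g]_{\TF}^{\circ}$ to $[g]_{\TF}$, I would observe that any boundary point of $\Cone\{\ind(g)\}$ other than $0$ lies in $\Cone^{\circ}\{H\}$ for some proper $H\subsetneq\ind(g)$ by Remark \ref{050599086025}, hence in $[g_{H}]_{\TF}$ with $g_{H} = \bigoplus_{h\in H}h$. Since each $h$ is tame, $g_{H}$ is tame with $\ind(g_{H}) = H\ne\ind(g)$, so Theorem \ref{310913388628} forces $[g_{H}]_{\TF}\ne[g]_{\TF}$. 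Combined with $[g]_{\TF}\subseteq\overline{[g]_{\TF}^{\circ}} = \Cone\{\ind(g)\}$, this rules out boundary points and gives the desired equality $[g]_{\TF} = \Cone^{\circ}\{\ind(g)\}$.

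The main obstacle is the step forcing $\ind(s\theta)\subseteq\langle\ind(g)\rangle_{\mathbb{R}}$: this is the only place where tameness must be used in an essential, non-formal way, going beyond the identity $2g = g\oplus g$. I expect to handle it by iterating Lemma \ref{076626606506} and Lemma \ref{284590845114} so as to swap the roles of $g$ and a suitable integer $\TF$-representative near $\theta$, producing a symmetric relation of the form $\theta\in D_{g}$ and $g\in D_{s\theta}$ that pins down the decomposition of $s\theta$ to summands in $\ind(g)$, in the spirit of the converse of Theorem \ref{310913388628}.
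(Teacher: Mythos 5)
Your second half (excluding boundary points of $\Cone\{\ind(g)\}$ via Remark \ref{050599086025}, the tame sub-sums $g_H$, and Theorem \ref{310913388628}) is essentially the paper's own closing argument and is fine. The problem is the first half. The heart of the theorem is precisely the step you defer: showing that an integer $\theta\in[g]_{\TF}$ forces $\ind(s\theta)$ into the cone of $\ind(g)$. You acknowledge this as ``the main obstacle'' and only say you ``expect'' to handle it by iterating Lemmas \ref{076626606506} and \ref{284590845114}; as written, condition $(1)$ of Theorem \ref{961838105897} is never established, so the proof is incomplete exactly where the real content lies. The paper closes this gap differently and more directly: since $g$ is tame, $g\in D_g$, so Lemma \ref{076626606506} gives $\overline{[g]_{\TF}}\cap K_0(\proj\Lambda)\subseteq D_g$; combined with $D_{\mathbb{N}g}=D_{\mathbb{N}\theta}$ (as $\theta$ and $g$ are $\TF$-equivalent) this yields $\theta\in D_{\mathbb{N}\theta}$, hence some multiple $s\theta$ is itself \emph{tame}, and then the converse direction of Theorem \ref{310913388628} pins down $\ind(s\theta)=\ind(g)$ on the nose, so $s\theta\in\Cone^{\circ}\{\ind(g)\}$ with no dimension count at all. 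Note that once you carry out your ``symmetric relation'' idea correctly, it collapses to this argument, and the entire detour through Theorem \ref{961838105897} becomes unnecessary.

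There is also a circularity issue with that detour. In the paper, Theorem \ref{962650431975} (the implication $(1)\Rightarrow(2)$ you want to quote) is proved via Lemma \ref{988807280630}, and the proof of that lemma, precisely in the case where $[g]_{\TF}$ contains a tame g-vector --- which is your situation --- invokes Theorem \ref{376103330122} itself. So you cannot cite Theorem \ref{961838105897} here without first giving an independent proof of $(1)\Rightarrow(2)$ in the tame case (this is doable, e.g.\ using that $\langle\ind(g)\rangle_{\mathbb{R}}$ is a rational subspace so rational points of $[g]_{\TF}^{\circ}$ are dense there, together with the paragraph above; but you would have to supply it). As it stands, the proposal both rests on a circular citation and leaves its key reduction step unproven.
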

	\begin{proof}
		Consider an arbitrary g-vector $h\in [g]_{\TF}$. Using Lemma \ref{076626606506}, one can show that $h\in D_{\mathbb{N}h}$. Therefore, there exists $s\in\mathbb{N}$ such that $sh$ is tame. Now, by Theorem \ref{310913388628}, we conclude that $\ind(g)=\ind(sh)$. Hence, $sh\in\Cone^{\circ}\{\ind(g)\}$ and we have $[g]_{\TF}\cap K_{0}(\proj\Lambda)_{\mathbb{Q}}\subseteq\Cone^{\circ}\{\ind(g)\}$. Consequently, it follows that $[g]_{\TF}\subseteq\Cone\{\ind(g)\}$. 
		
		Since the facets of $\Cone\{\ind(g)\}$ are generated by rational vectors, rational points form dense subsets in each of them. Thus, it is sufficient to show that 
		\[\partial\Cone\{\ind(g)\}\cap[g]_{\TF}\cap K_0(\proj\Lambda)=\emptyset.\]
		Consider a g-vector $h$ in $\partial\Cone\{\ind(g)\}$. Then  by Remark \ref{050599086025}, there exists $H\subsetneq\ind(g)$ such that $h\in\Cone^{\circ}\{H\}$. Hence, it follows from Theorem \ref{310913388628} that $\ind(th)=H$, for some $t\in\mathbb{N}$. Therefore $th$ is tame. Since $\ind(th)\neq\ind(g)$, again, it follows from Theorem \ref{310913388628} that $th\notin [g]_{\TF}$.
	\end{proof}
	Unlike the tame case, if $g$ does not satisfy the ray condition, the inequality \[\Cone\{\ind(\mathbb{N}g)\}\neq\Cone\{\ind(g)\}\]
	might happen (see \cite[Example 5.9]{AsIy24}). In the following theorem, we show that there exists a finite set of rational generators for $\Cone\{\ind(\mathbb{N}g)\}$. Therefore, it is rational and polyhedral.
	\begin{theorem}\label{076273627938}
		Let $g$ be a g-vector. Then, there exists $t\in\mathbb{N}$ such that
		\[\Cone\{\ind(\mathbb{N}g)\}=\Cone\{\ind(tg)\}.\]
	\end{theorem}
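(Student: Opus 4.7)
The plan is to use Proposition \ref{661790071494} to separate the tame and wild parts of $g$, and then show that both contributions stabilise for sufficiently large $t$. First I would reduce to the case in which the reduced tame g-vector $f\in\add(\mathbb{N}g)$ guaranteed by Proposition \ref{661790071494} (with $\ind(f)=\tame(\ind(\mathbb{N}g))$) is itself a direct summand of $g$. This reduction is harmless, because by Lemma \ref{806138874129} replacing $g$ by any positive multiple does not change $\Cone\{\ind(\mathbb{N}g)\}$, and some multiple of $g$ does admit $f$ as a direct summand by the definition of $\add(\mathbb{N}g)$.

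Having arranged this, Proposition \ref{661790071494} yields for every $t\in\mathbb{N}$ a decomposition $tg=f_t\oplus h_t$ with $f_t\in\Cone^{\circ}\{\ind(f)\}$ and where no direct summand of $h_t$ lies in $\Cone\{\ind(f)\}$. By Proposition \ref{700686515031}, since $f$ is reduced, $\ind(f)$ is linearly independent, so $\Cone\{\ind(f)\}$ is a fixed simplicial rational subcone of $\Cone\{\ind(\mathbb{N}g)\}$ that absorbs the tame contribution uniformly in $t$. Thus the tame generators are already captured at $t=1$, and the question reduces to controlling the family $\{\ind(h_t)\}_{t\in\mathbb{N}}$ of wild generically indecomposable summands.

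The crux of the argument, and its main obstacle, is to prove that $\bigcup_t \ind(h_t)$ spans only finitely many rays. A naive induction is not available, since scaling a wild g-vector can in principle introduce genuinely new generically indecomposable summands in its generic decomposition. My approach would combine Remark \ref{050599086025} (every face of the increasing cones $\Cone\{\ind(tg)\}$ extends to a face of $\Cone\{\ind(\mathbb{N}g)\}$) with Lemma \ref{072929369060} (distinct open cones cannot share an interior g-vector), together with the closedness of $\Cone\{\ind(\mathbb{N}g)\}$ established in Theorem \ref{758672546350}. Using these, any new extreme ray appearing at level $t'>t$ beyond the tame contribution would produce a new proper face of $\Cone\{\ind(\mathbb{N}g)\}$; a descent-on-dimension argument, bounded by the ambient rank $|\Lambda|$ and by the linear independence provided by reduced representatives (via Lemma-Definition \ref{090299082299} and Proposition \ref{700686515031}), should force the total number of extreme rays to be finite. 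Once finiteness is in hand, taking $t$ to be a common multiple of the finitely many indices at which each extreme ray first appears yields $\Cone\{\ind(\mathbb{N}g)\}\subseteq\Cone\{\ind(tg)\}$, while the reverse inclusion is immediate from Theorem \ref{758672546350}. Rationality and polyhedrality of the cone are then automatic, as $\ind(tg)$ is a finite set of integer vectors in $K_0(\proj\Lambda)$.
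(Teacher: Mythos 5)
Your setup follows the paper: reduce (harmlessly, via Lemma \ref{806138874129}) to the case where the reduced tame vector $f$ with $\ind(f)=\tame(\ind(\mathbb{N}g))$ is a direct summand of $g$, and invoke Proposition \ref{661790071494} to write $tg=f_t\oplus h_t$. But the step you yourself identify as the crux --- that $\bigcup_t\ind(h_t)$ contributes only finitely many rays --- is exactly where the proof has to happen, and the mechanism you propose does not deliver it. The claim that ``any new extreme ray appearing at level $t'>t$ would produce a new proper face of $\Cone\{\ind(\mathbb{N}g)\}$, and a descent-on-dimension argument bounded by $|\Lambda|$ forces finiteness'' is not valid as convex geometry: an increasing union of rational simplicial cones can be a closed convex cone with infinitely many extreme rays, all creating faces of the \emph{same} dimension (take the cones over the convex hulls of the points $(\cos\theta_k,\sin\theta_k,1)$, $k\le n$, together with the limit point, with $\theta_k\to\theta_\infty$). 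So boundedness of the ambient dimension, closedness from Theorem \ref{758672546350}, and Lemma \ref{072929369060} cannot by themselves force finitely many rays; finiteness of the extreme rays is essentially the polyhedrality you are trying to prove, and your sketch is circular at this point. The needed input is algebraic, not convex-geometric.

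The paper avoids proving finiteness of rays directly. It instead finds $t$ with $\partial\Cone\{\ind(tg)\}\cap\Cone^{\circ}\{\ind(\mathbb{N}g)\}=\emptyset$ and $\langle\ind(tg)\rangle_{\mathbb{R}}=\langle\ind(\mathbb{N}g)\rangle_{\mathbb{R}}$, which already forces $\Cone\{\ind(tg)\}=\Cone\{\ind(\mathbb{N}g)\}$. The two ingredients you are missing are: (i) when $f=0$, Lemma \ref{781520404735} (together with $\Cone^{\circ}\{\ind(\mathbb{N}g)\}\subseteq[g]_{\TF}$) shows the boundary of $\Cone\{\ind(tg)\}$ can never meet the open cone, so one only has to stabilize the span; and (ii) when $f\neq 0$, the uniqueness statement at the end of Proposition \ref{661790071494} confines any boundary point of $\Cone\{\ind(tg)\}$ lying in $\Cone^{\circ}\{\ind(\mathbb{N}g)\}$ to be of the form $f_i\oplus h_t$ with $f_i$ one of the \emph{finitely many} proper direct summands of $f$; for each such $f_i$ one produces $t_i$ with $f_i\oplus h_{t_i}\in\partial\Cone\{\ind(\mathbb{N}g)\}$, notes that this property passes from $t$ to multiples $at$ (since $f_i\oplus h_{at}$ is a direct summand of $f_i\oplus h_t$), and takes $t$ divisible by all $t_i$. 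In your write-up the interaction between proper tame summands $f_i$ and the wild part $h_t$ --- precisely the source of boundary facets of $\Cone\{\ind(tg)\}$ sitting inside the open cone --- is not addressed at all; you treat the tame part as ``captured at $t=1$'' and then look only at wild rays, so the argument as proposed does not close.
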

	\begin{proof}
		First, note that if there is a tame g-vector in $\Cone^{\circ}\{\ind(\mathbb{N}g)\}$, then the assertion follows from Theorem \ref{376103330122} and Lemma \ref{072929369060}. Thus, we can assume that there is no tame g-vector in $\Cone^{\circ}\{\ind(\mathbb{N}g)\}$.
		Consider the assumptions of Proposition \ref{661790071494}. Without loss of generality, let $f$ be a direct summand of $g$. Then, $\Cone\{\ind(f)\}\subseteq \partial\Cone\{\ind(\mathbb{N}g)\}$.
		
		By Lemma \ref{781520404735}, if $f=0$, then for all $t\in\mathbb{N}$,
		\[\partial\Cone\{\ind(tg)\}\cap \Cone^{\circ}\{\ind(\mathbb{N}g)\}=\emptyset.\]
		Choose $t\in\mathbb{N}$ with $\dim_{\mathbb{R}}\langle\ind(tg)\rangle_{\mathbb{R}}= \dim_{\mathbb{R}} \langle\ind(\mathbb{N}g)\rangle_{\mathbb{R}}$. Then, it is easy to see that $\Cone^{\circ}\{\ind(\mathbb{N}g)\}=\Cone^{\circ}\{\ind(tg)\}$.
		
		Now, assume that $f\neq 0$.
		Choose a vector $\theta$ in $\partial\Cone\{\ind(\mathbb{N}g)\}$ such that $g$ belongs to $(f, \theta)$. By Remark \ref{050599086025}, there exist $t\in\mathbb{N}$ and $H\subsetneq\ind(tg)$ such that $\theta\in\Cone^{\circ}\{H\}\subseteq \partial\Cone\{\ind(\mathbb{N}g)\}$. Therefore, there exists a g-vector $h_1\in\Cone^{\circ}\{H\}$ such that $g$ is contained in $\Cone^{\circ}\{f,\ind(h_1)\}$.
		We may assume that $g=f\oplus h_1$, and $h_1$ does not admit any direct summands in $\Cone\{\ind(f)\}$. Since $h_t$ in the decomposition \eqref{eq876140490630} is a direct summand of $th_1$, by Lemma \ref{434203923932} and Corollary \ref{342694140028}, we conclude that $h_t$ must be in $\partial\Cone\{(\ind(\mathbb{N}g))\}$, for all $t\in\mathbb{N}$.
		
		Consider the finite set $\{f_i\mid i\in I\}$ consisting of all proper direct summands of $f$.
		Then  by Proposition \ref{661790071494}, Remark \ref{050599086025} and Theorem \ref{758672546350}, it is straightforward to show that
		\begin{equation}\label{Qu2bf5hg5PSp}
			\scalemath{0.9}{\partial\Cone\{\ind(tg)\}\cap\Cone^{\circ}\{\ind(\mathbb{N}g)\}\neq\emptyset\iff f_i\oplus h_t\in \partial\Cone\{\ind(tg)\}\cap\Cone^{\circ}\{\ind(\mathbb{N}g)\}}
		\end{equation}
		for some $i\in I$. In the following, for each $i\in I$, we find $t_i\in\mathbb{N}$ such that $f_i\oplus h_{t_i}\in\partial\Cone\{\ind(\mathbb{N}g)\}$:
		
		Assume that $f_i\oplus h_1\in \Cone^{\circ}\{\ind(\mathbb{N}g)\}$, for some $i\in I$. Then, similar to the above, we can find $t'_i\in\mathbb{N}$, and direct summands $f'$ of $f$ and $h'_{t'_i}$ of $h_{t'_i}$ such that $f'\oplus h'_{t'_i}\in \partial\Cone\{\ind(\mathbb{N}g)\}$ and
		$f_i\oplus h_1\in\Cone^{\circ}\{(f-f_i),\ind (f'\oplus h'_{t'_i})\}$ (see Figure \ref{385950501174}).
		Therefore, $f_i\in\Cone\{\ind(f'\oplus h'_{t'_i})\}$. Hence,
		\[f_i\oplus h'_{t'_i}\in\Cone\{\ind(f'\oplus h'_{t'_i})\}\subseteq\partial\Cone\{\ind(\mathbb{N}g)\}.\]
		Since $h_1\in\Cone\{\ind(f\oplus h'_{t'_i})\}$, by Proposition \ref{661790071494}, we know that there exists $t_i,c\in\mathbb{N}$ such that $h_{t_i}$ is a direct summand of $ch'_{t'_i}$. Thus,
		\[f_i\oplus h_{t_i}\in\Cone\{\ind(f_i\oplus h'_{t'_i})\}\subseteq\partial\Cone\{\ind(\mathbb{N}g)\}.\] 
		
		Note that since for $a\in\mathbb{N}$, $f_i\oplus h_{at}$ is a direct summand of $f_i\oplus h_{t}$, if $f_i\oplus h_{t}$ belongs to $\partial\Cone\{\ind(\mathbb{N}g)\}$, then so is $f_i\oplus h_{at}$. 
		Hence, by \eqref{Qu2bf5hg5PSp}, for some
		$\prod_{i\in I} t_i\mid t$, we have $\partial\Cone\{\ind(tg)\}\cap\Cone^{\circ}\{\ind(\mathbb{N}g)\}=\emptyset$ and $\langle\ind(\mathbb{N}g)\rangle_{\mathbb{R}}= \langle\ind(tg)\rangle_{\mathbb{R}}$. Therefore,
		\[\Cone\{\ind(\mathbb{N}g)\}= \Cone\{\ind(tg)\}.\]
		\begin{figure}
			\centering
			\textbf{}\par\medskip
			\[\begin{tikzpicture}[scale=0.35, rotate around z=240]
				\coordinate (A) at (0,0);
				\coordinate (B) at (10,0);
				\coordinate (C) at (3,3);
				\coordinate (E) at (5,5);
				\coordinate (F) at (6,6);
				\coordinate (O) at (5,20);
				\node at (O) [below] {$0$};
				\node at (A) [left] {$\scalemath{0.5}{f-f_i}$};
				\node at (B) [below] {$\scalemath{0.5}{f_i}$};
				\node at (C) [right] {$\scalemath{0.5}{h_1}$};
				\node at (E) [right] {$\scalemath{0.5}{h_{t'_i}}$};
				\node at (F) [below] {$\scalemath{0.5}{h_{t_i}}$};
				\filldraw[black] (7,3) circle (3pt);
				\draw (O) -- (A);
				\draw (O) -- (B);
				\draw (O) -- (C);
				\draw (O) -- (E);
				\draw (O) -- (F);
				\draw (O) -- (A);
				\draw (A) -- (B) node [midway,left] {$\scalemath{0.5}{f}$};
				\filldraw[black] (5,0) circle (3pt);
				\draw[thick,dotted,red] (5,0) -- (C);
				\filldraw[black] (4.45,0.8) circle (3pt);
				\node at (4.1,0.8) {$\scalemath{0.5}{g}$};
				\draw (A) -- (F);
				\draw (B) -- (C);
				\node at (5.6,1.29) {$\scalemath{0.5}{f_i\oplus h_1}$};
				\filldraw[black] (5.07,2.15) circle (3pt);
				\draw (B) -- (E) node [midway, right=2.5mm] {$\scalemath{0.5}{f_{i}\oplus h_{t'_i}}$};
				\draw (B) -- (F);
				\draw[thick,dotted,blue] (A) -- (7,3);
			\end{tikzpicture}\]
			\caption{The idea behind the proof of Theorem \ref{076273627938}}\label{385950501174}
		\end{figure}
	\end{proof}
	\begin{corollary}\label{355706191811}
		Let $g$ be a g-vector. Then, $tg$ satisfies the ray condition, for some $t\in\mathbb{N}$. Therefore, the cone of $g$ is simplicial.
	\end{corollary}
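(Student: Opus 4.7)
The plan is to derive both assertions as short consequences of Theorem \ref{076273627938} together with the reduction machinery already developed in Remark \ref{399829457548}, Definition \ref{841845155042} and Proposition \ref{700686515031}. The rational polyhedral structure is essentially free from Theorem \ref{076273627938}, so the work will lie in isolating a set of generators that is simultaneously reduced (to force linear independence) and compatible with multiplication by $t$ (to force the ray condition).

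First I would apply Theorem \ref{076273627938} to fix $t_0\in\mathbb{N}$ with $\Cone\{\ind(\mathbb{N}g)\}=\Cone\{\ind(t_0 g)\}$. Since $\ind(t_0 g)$ is a finite subset of the integral lattice $K_0(\proj\Lambda)$, this already gives rationality. Next, I would use Remark \ref{399829457548} to select $H\subseteq\ind(t_0 g)$ such that no $h\in H$ lies in $\Cone\{\ind(\mathbb{N}(H\setminus\{h\}))\}$ and $\Cone^{\circ}\{\ind(\mathbb{N}H)\}=\Cone^{\circ}\{\ind(\mathbb{N}g)\}$. Forming a g-vector $h^{(r)}\in\add(\mathbb{N}g)$ whose generic decomposition is supported exactly on $H$ (with no duplicated summands) yields a \emph{reduced} g-vector in the sense of Definition \ref{841845155042}. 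Proposition \ref{700686515031} then gives that $H=\ind(h^{(r)})$ is linearly independent in $K_0(\proj\Lambda)_{\mathbb{R}}$, which, combined with $\Cone\{\ind(\mathbb{N}g)\}=\Cone\{H\}$, is exactly the simplicial property.

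To obtain the ray condition, I would pass to a common multiple $t=kt_0$ large enough that $tg$ realizes $H$ as (part of) its generic decomposition, with every generically indecomposable summand occurring exactly once on each of the rays $\mathbb{R}^{>0}h$ for $h\in H$. For any $s\in\mathbb{N}$, any generically indecomposable summand of $stg$ lies in $\Cone\{\ind(\mathbb{N}g)\}=\Cone\{H\}$; since $H$ is linearly independent and reduced, Lemma \ref{072929369060} forces such a summand to be a positive rational multiple of a unique element of $H$. This is the content of the ray condition of \cite[Definition 2.20]{HY25} for $tg$.

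The main obstacle I expect is the bookkeeping in the last step: one must argue that after passing to the multiple $t$, no \emph{new} indecomposable ray can appear in any further multiple $stg$, and that the multiplicities on the existing rays behave linearly in $s$. Both points, however, follow from the cone equality of Theorem \ref{076273627938} and the uniqueness part of generic decompositions \ref{108559221925} once linear independence of $H$ is in place, so I anticipate a short and formal argument rather than a genuinely new technical step.
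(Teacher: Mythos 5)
Your rationality claim is fine and agrees with the paper, but both your simpliciality step and your ray-condition step rest on the unproved equality $\Cone\{\ind(\mathbb{N}g)\}=\Cone\{H\}$, and this is a genuine, circular gap. Remark \ref{399829457548} only gives an equality of open cones of the $\mathbb{N}$-saturations, $\Cone^{\circ}\{\ind(\mathbb{N}H)\}=\Cone^{\circ}\{\ind(\mathbb{N}g)\}$; the set $\ind(\mathbb{N}h^{(r)})$ may a priori contain indecomposable summands of higher multiples $th^{(r)}$ that lie outside $\Cone\{H\}$, and whether this can happen is exactly the ray-condition question you are trying to answer. Reducedness does not exclude it: every generically indecomposable g-vector is reduced (Remark \ref{304030220908}), yet generically indecomposable g-vectors can fail the ray condition (this is the point of \cite[Example 5.9]{AsIy24} recalled in the introduction). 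So identifying $\Cone\{\ind(\mathbb{N}g)\}$ with the simplicial cone $\Cone\{H\}$, and then asserting that every indecomposable summand of $stg$ lies in $\Cone\{H\}$, assumes the conclusion. Moreover, even granting that equality, the last step is a non sequitur: a vector in a simplicial cone need not lie on one of its extremal rays, and Lemma \ref{072929369060} (two $\mathbb{N}$-saturated open cones sharing a g-vector coincide) gives no control over an individual indecomposable summand of $stg$, which could sit in the relative interior of a face of $\Cone\{H\}$; nothing you cite prevents new rays from appearing as $s$ grows.

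This missing content is precisely what the paper's proof supplies, and in the opposite logical order: it first fixes generators $g_1,\dots,g_s$ of the \emph{extremal rays} of the rational polyhedral cone $\Cone\{\ind(\mathbb{N}g)\}=\Cone\{\ind(g)\}$, and shows, via the dichotomy $g_i\in D_{g_i}$ (then some $t_ig_i$ is tame with $|\ind(t_ig_i)|=1$) versus $g_i\notin D_{g_i}$, that after rescaling \emph{every} multiple of each $g_i$ remains generically indecomposable. Only then does $t'\sum_i g_i=\bigoplus_i t'g_i$ hold with all multiples of all summands indecomposable, so uniqueness of the generic decomposition yields the ray condition for this vector, and, expanding a suitable multiple $tg\in\Cone\{g_1,\dots,g_s\}$ in these generators, for $tg$ as well; simpliciality then follows from reducedness of this ray-condition vector together with Proposition \ref{700686515031}, rather than being an input. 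To repair your argument you would need an analogue of the step ``all multiples of an extremal-ray generator stay generically indecomposable''; what you dismiss as bookkeeping is the actual heart of the proof.
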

	\begin{proof}
		By Theorem \ref{076273627938},
		without loss of generality, we can assume that $\Cone\{\ind(\mathbb{N}g)\}=\Cone\{\ind(g)\}$.
		We show that there exists a g-vector in $\Cone^{\circ}\{\ind(g)\}$ satisfies the ray condition. Since $\Cone\{\ind(g)\}$ is a rational polyhedral cone, there are finitely many g-vectors $\{g_1,g_2,\ldots,g_s\}$ such that $\{\mathbb{R}^{>0}g_1,\mathbb{R}^{>0}g_2,\ldots,\mathbb{R}^{>0}g_s\}$ is the complete set of pairwise distinct rays of $\Cone\{\ind(g)\}$. Assume that for some $t\in\mathbb{N}$ and $1\le i\le s$, we have $tg_i=h_1\oplus h_2$. Then  by Lemma \ref{206430023172}, $g\in\Cone\{h_1,h_2\}\subseteq\Cone\{\ind(g)\}$. Since $\mathbb{R}^{>0}g_i$ is a ray, we conclude that $h_1,h_2\in\mathbb{R}^{>0}g$. Therefore, in this case, it follows from Lemma \ref{076626606506} that $g_i\in D_{\mathbb{N}g_i}$. Thus, $t_ig_i$ is tame and $|\ind(t_ig_i)|=1$, for some $t_i\in\mathbb{N}$. Hence, there exists $t'\in\mathbb{N}$ such that for all $1\le i\le s$,  $|\ind(t'g_i)|=1$. Thus,
		\[g':=t'\sum_{i=1}^{s}g_i=\bigoplus_{i=1}^{s}t'g_i\]
		satisfies the ray condition. Moreover, by \cite[Theorem 4]{HY25b}, the set $\{t'g_1, t'g_2,\ldots, t'g_s\}$ is linearly independent. Hence, $\Cone\{\ind(g')\}=\Cone\{\ind(\mathbb{N}g)\}$ is simplicial.
	\end{proof}
	\begin{corollary}\label{276955124928}
		Let $g$ be a g-vector. Then, it admits a reduced version $g'$ satisfying the ray condition and $|\ind(g')|=\dim_{\mathbb{R}}\langle\ind(\mathbb{N}g)\rangle_{\mathbb{R}}$.
	\end{corollary}
	
	\begin{lemma}\label{988807280630}
		Let $g$ be a g-vector. Then
		\[\Cone\{\ind(\mathbb{N}g)\}\cap [g]^{\circ}_{\TF}\subseteq\Cone^{\circ}\{\ind(\mathbb{N}g)\}\subseteq [g]_{\TF}.\]
		Especially, if $g\in[g]^{\circ}_{\TF}$, then
		\[\Cone\{\ind(\mathbb{N}g)\}\cap [g]^{\circ}_{\TF}=\Cone^{\circ}\{\ind(\mathbb{N}g)\}.\]
	\end{lemma}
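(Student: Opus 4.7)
My plan is to prove the two inclusions separately and then deduce the ``especially'' equality. The second inclusion $\Cone^{\circ}\{\ind(\mathbb{N}g)\}\subseteq[g]_{\TF}$ is immediate from Lemma~\ref{206430023172}, which gives $\Cone^{\circ}\{\ind(tg)\}\subseteq[tg]_{\TF}$ for every $t\in\mathbb{N}$; since positive scaling preserves the defining torsion pairs $(\overline{\mathcal{T}}_g,\mathcal{F}_g)$ and $(\mathcal{T}_g,\overline{\mathcal{F}}_g)$, one has $[tg]_{\TF}=[g]_{\TF}$, and taking the union over $t$ yields the claim. The ``especially'' statement then follows from Corollary~\ref{342694140028}: if $g\in[g]^{\circ}_{\TF}$, then $\Cone^{\circ}\{\ind(\mathbb{N}g)\}\subseteq[g]^{\circ}_{\TF}$, and combined with the obvious $\Cone^{\circ}\{\ind(\mathbb{N}g)\}\subseteq\Cone\{\ind(\mathbb{N}g)\}$ this furnishes the reverse containment needed for equality, once the first inclusion is in hand.

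For the non-trivial first inclusion I would argue by contradiction. Take $\theta\in\Cone\{\ind(\mathbb{N}g)\}\cap[g]^{\circ}_{\TF}$ with $\theta\notin\Cone^{\circ}\{\ind(\mathbb{N}g)\}$; by Theorem~\ref{758672546350} this forces $\theta\in\partial\Cone\{\ind(\mathbb{N}g)\}$. The first step is a reduction to a simplicial model: via Corollary~\ref{355706191811}, Remark~\ref{276955124928} and Lemma~\ref{090299082299}, $g$ admits a reduced version $g^{(r)}$ satisfying the ray condition whose indecomposables $\ind(g^{(r)})=\{h_1,\ldots,h_s\}$ are linearly independent by Proposition~\ref{700686515031}, and the cones of $g$ and $g^{(r)}$ coincide. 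The already-established second inclusion puts $g^{(r)}\in[g^{(r)}]_{\TF}\cap[g]_{\TF}$, so $[g^{(r)}]_{\TF}=[g]_{\TF}$, and we may replace $g$ by $g^{(r)}$. Writing $\theta=\sum_{i}a_ih_i$ uniquely with $a_i\ge 0$, the assumption forces $I:=\{i:a_i>0\}\subsetneq\{1,\ldots,s\}$; setting $g'':=\bigoplus_{i\in I}h_i$, Lemma~\ref{206430023172} gives $\theta\in\Cone^{\circ}\{\ind(g'')\}\subseteq[g'']_{\TF}$, hence $[g'']_{\TF}=[g]_{\TF}$ while $g''$ is a \emph{proper} direct summand of $g$.

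The main obstacle is extracting a contradiction from $[g'']_{\TF}=[g]_{\TF}$. My plan is to combine several of the earlier tools. Applying Lemma~\ref{434203923932} (moreover) with $\theta\in[g]^{\circ}_{\TF}$ and $g\in[g]_{\TF}$ gives $t\theta-g\in[g]^{\circ}_{\TF}$ for $t$ large; in the basis $\{h_i\}$ this element has coefficient $-1$ on each $h_j$ with $j\notin I$, so it lies strictly outside $\Cone\{\ind(\mathbb{N}g)\}$. Approximating by a rational g-vector $\eta\in[g]^{\circ}_{\TF}\setminus\Cone\{\ind(\mathbb{N}g)\}$ and applying the second inclusion to $\eta$ yields $\Cone^{\circ}\{\ind(\mathbb{N}\eta)\}\subseteq[g]_{\TF}$, while Lemma~\ref{072929369060} forces $\Cone^{\circ}\{\ind(\mathbb{N}\eta)\}$ and $\Cone^{\circ}\{\ind(\mathbb{N}g)\}$ to be distinct (they would have to coincide to share a g-vector, but $\eta$ lies in the first and not the second). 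Combined with Lemma~\ref{076626606506}, which upgrades $[g'']_{\TF}=[g]_{\TF}$ to $D_{\mathbb{N}g''}=D_{\mathbb{N}g}$ and yields compatibility between $g''$ and each $h_j$ for $j\notin I$, and with Proposition~\ref{206559753624} to track dimensions of nested $\TF$-classes through the proper face $\Cone\{\ind(g'')\}$, one should arrive at the desired contradiction. Making this last step rigorous --- pinning down exactly why the combinatorial face structure of the simplicial cone $\Cone\{h_1,\ldots,h_s\}$ is incompatible with the equality $[g'']_{\TF}=[g]_{\TF}$ --- is the step I expect to be the main technical hurdle.
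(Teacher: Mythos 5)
Your reductions are fine as far as they go: the inclusion $\Cone^{\circ}\{\ind(\mathbb{N}g)\}\subseteq[g]_{\TF}$ via Lemma \ref{206430023172} and Theorem \ref{758672546350}, the ``especially'' part via Corollary \ref{342694140028}, the passage to a reduced version with the ray condition and linearly independent $\ind(g^{(r)})$, and the observation that a boundary point $\theta\in\partial\Cone\{\ind(\mathbb{N}g)\}\cap[g]^{\circ}_{\TF}$ forces $[g'']_{\TF}=[g]_{\TF}$ for a proper face sum $g''=\bigoplus_{i\in I}h_i$ all match ingredients the paper uses. But the heart of the lemma is precisely the step you defer, and the route you sketch for it has two genuine defects. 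First, the approximation ``by a rational g-vector $\eta\in[g]^{\circ}_{\TF}\setminus\Cone\{\ind(\mathbb{N}g)\}$'' is not available: $t\theta-g$ is only a real vector, $\langle[g]_{\TF}\rangle_{\mathbb{R}}$ need not be a rational subspace, and density of rational points in $[g]_{\TF}$ is exactly Conjecture \ref{404319523362}(1), stated in this paper as open. Second, even granting such an $\eta$, the conclusion that $\Cone^{\circ}\{\ind(\mathbb{N}\eta)\}\neq\Cone^{\circ}\{\ind(\mathbb{N}g)\}$ while both lie in $[g]_{\TF}$ is not a contradiction with anything proved so far: two $\TF$-equivalent g-vectors with different cones are not excluded (ruling this out on the interior is essentially the content of Conjecture \ref{362256186221} itself), and Theorem \ref{310913388628} gives the needed converse only for tame g-vectors. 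So the contradiction is never actually extracted, and the combinatorial-face argument you hope will finish it cannot work at this level of generality without more input.

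The paper closes exactly this gap with a tame/wild case analysis that your plan bypasses. If some tame g-vector lies in $[g]_{\TF}$, Theorem \ref{376103330122} and Lemma \ref{072929369060} settle everything. If $\tame(\ind(\mathbb{N}g))=\emptyset$, Lemma \ref{781520404735} (whose hypothesis is precisely your situation: an integer point such as $g''$ lying in $\partial\Cone\{\ind(tg)\}\cap[g]_{\TF}$) shows the boundary cannot meet $[g]_{\TF}$ at all, and Theorem \ref{758672546350} transfers this to $\Cone\{\ind(\mathbb{N}g)\}$. In the remaining case $\tame(\ind(\mathbb{N}g))\neq\emptyset$, the paper invokes the decomposition $tg=f_t\oplus h_t$ of Proposition \ref{661790071494}, whose uniqueness statement pins an integer point $f'\oplus h_1$ in the boundary intersection; because it is integral, one can produce an \emph{integral} $h'\in[g]_{\TF}\setminus\Cone\{\ind(g)\}$ (avoiding any rational-density assumption) with $f+sh'=f\oplus sh'$ via Lemma \ref{076626606506}, and then Lemma \ref{072929369060} forces $\Cone^{\circ}\{\ind(g)\}=\Cone^{\circ}\{\ind(\mathbb{N}(f\oplus sh'))\}$, contradicting $h'\notin\Cone\{\ind(g)\}$. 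Without this tame-part machinery (Lemma \ref{781520404735} and Proposition \ref{661790071494}), your argument stalls exactly where you say it does.
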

	\begin{proof}
		First, note that Lemma \ref{206430023172} along with Theorem \ref{758672546350} imply that $\Cone^{\circ}\{\ind(\mathbb{N}g)\}\subseteq [g]_{\TF}$.
		Moreover, it is straightforward to check that if $\dim_{\mathbb{R}}\langle\ind(\mathbb{N}g)\rangle_{\mathbb{R}}=1$, then
		\[\Cone\{\ind(\mathbb{N}g)\}\cap [g]^{\circ}_{\TF}\subseteq\Cone^{\circ}\{\ind(\mathbb{N}g)\},\]
		and by Corollary \ref{342694140028}, equality holds, if $g$ belongs to $[g]_{\TF}^{\circ}$. Moreover, if there exists a tame g-vector in $[g]_{\TF}$, then the assertions follow from Lemma \ref{072929369060} and Theorem \ref{376103330122}. Otherwise, assume that $\dim_{\mathbb{R}}\langle\ind(\mathbb{N}g)\rangle_{\mathbb{R}} \ge 2$ and there is no tame g-vector in $[g]_{\TF}$. We break down the proof into two cases.\\
		\\
		\textbf{First case:}
		Let $\tame(\ind(\mathbb{N}g))=\emptyset$. Then  by Lemma \ref{781520404735}, we can see that for all $t\in\mathbb{N}$,
		$\partial\Cone\{\ind(tg)\}\cap[g]_{\TF}=\emptyset$.
		Therefore, by Theorem \ref{758672546350}, $\partial\Cone\{\ind(\mathbb{N}g)\}\cap[g]_{\TF}= \emptyset$. Thus,
		\[\Cone\{\ind(\mathbb{N}g)\}\cap[g]_{\TF}= \Cone^\circ\{\ind(\mathbb{N}g)\}.\]
		\\
		\textbf{Second case:}
		Let $\tame(\ind(\mathbb{N}g))\neq\emptyset$. Fix the notation of Proposition \ref{661790071494}. Based on Theorem \ref{076273627938}, without loss of generality, we can assume that
		\begin{itemize}
			\item $\Cone\{\ind(\mathbb{N}g)\}=\Cone\{\ind(g)\}$,
			\item $f$ and $h_1$ are direct summands of $g$, and
			\item $f, h_1\in\partial\Cone\{\ind(g)\}$.
		\end{itemize}
		Now, let
		\begin{equation}\label{eq459033724587}
			\partial\Cone\{\ind(g)\}\cap[g]_{\TF}^{\circ}\neq \emptyset.
		\end{equation}
		Then  by Proposition \ref{661790071494}, $f'\oplus h_1$ is included in the above intersection, for some direct summand $f'$ of $f$. Therefore, since $f$ and $f'\oplus h_1$ are integer vectors, it is easy to see that there is a g-vector $h'\in [g]_{\TF}\setminus\Cone\{\ind(g)\}$ such that $f'\oplus h_1\in\Cone^{\circ}\{f,h'\}$. Additionally, by Lemma \ref{076626606506}, $f+sh'=f\oplus sh'$, for some $s\in\mathbb{N}$.
		However, by Lemma \ref{072929369060},
		\[\Cone\{\ind(g)\}= \Cone\{\ind(\mathbb{N}(f\oplus sh'))\}.\]
		Since $h'\notin\Cone\{\ind(g)\}$, this leads to a contradiction. Thus, \eqref{eq459033724587} never holds.
		
		Moreover, The second part is a direct consequence of the first part and Corollary \ref{342694140028}.
	\end{proof}
	We now characterize precisely when the interior of the $\TF$-equivalence class coincides with the corresponding open cone of a given g-vector.
	\begin{theorem}\label{962650431975}
		For any g-vector $g$, the following statements are equivalent.
		\begin{itemize}
			\item[$(1)$]
			$\dim_{\mathbb{R}}\langle [g]_{\TF} \rangle_{\mathbb{R}}= \dim_{\mathbb{R}}\langle\ind(\mathbb{N}g)\rangle_{\mathbb{R}}$.
			\item[$(2)$]
			$[g]_{\TF}^{\circ}=\Cone^{\circ}\{\ind(\mathbb{N}g)\}$.
		\end{itemize}
	\end{theorem}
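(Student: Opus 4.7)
The implication $(2)\Rightarrow(1)$ is immediate: by Remark~\ref{178502979593}, $[g]_{\TF}^{\circ}$ is a non-empty open subset of $\langle [g]_{\TF}\rangle_{\mathbb{R}}$, while $\Cone^{\circ}\{\ind(\mathbb{N}g)\}$ is a non-empty open subset of $\langle\ind(\mathbb{N}g)\rangle_{\mathbb{R}}$; the assumed set-theoretic equality forces these two ambient subspaces, and hence their dimensions, to coincide.

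For $(1)\Rightarrow(2)$, the plan is to use Lemma~\ref{988807280630} twice, together with the closedness of $\Cone\{\ind(\mathbb{N}g)\}$ from Theorem~\ref{076273627938}. The inclusion $\Cone^{\circ}\{\ind(\mathbb{N}g)\}\subseteq [g]_{\TF}$ furnished by Lemma~\ref{988807280630} yields $\langle\ind(\mathbb{N}g)\rangle_{\mathbb{R}}\subseteq\langle [g]_{\TF}\rangle_{\mathbb{R}}$, and the dimension hypothesis (1) promotes this to equality of subspaces. Consequently $\Cone^{\circ}\{\ind(\mathbb{N}g)\}$ is a non-empty open subset of $\langle [g]_{\TF}\rangle_{\mathbb{R}}$ contained in $[g]_{\TF}$, so $\Cone^{\circ}\{\ind(\mathbb{N}g)\}\subseteq [g]_{\TF}^{\circ}$; in particular, since $g\in\Cone^{\circ}\{\ind(g)\}\subseteq\Cone^{\circ}\{\ind(\mathbb{N}g)\}$, we obtain $g\in[g]_{\TF}^{\circ}$.

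For the reverse inclusion, fix $\theta\in[g]_{\TF}^{\circ}$. Since $[g]_{\TF}$ is convex by Proposition~\ref{987463920193}, Lemma~\ref{434203923932} keeps the entire segment $[g,\theta]$ inside $[g]_{\TF}^{\circ}$. By Theorem~\ref{076273627938}, $\Cone\{\ind(\mathbb{N}g)\}$ is a closed rational polyhedral cone. If $\theta$ did not belong to this cone, the continuous path along $[g,\theta]$ — starting at $g$ in the topological interior of the cone and ending outside it — would cross $\partial\Cone\{\ind(\mathbb{N}g)\}$ at an intermediate point $\theta'\in\partial\Cone\{\ind(\mathbb{N}g)\}\cap [g]_{\TF}^{\circ}$. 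But the unconditional inclusion $\Cone\{\ind(\mathbb{N}g)\}\cap [g]_{\TF}^{\circ}\subseteq\Cone^{\circ}\{\ind(\mathbb{N}g)\}$ of Lemma~\ref{988807280630} would then force $\theta'\in\Cone^{\circ}\{\ind(\mathbb{N}g)\}$, contradicting $\theta'\in\partial\Cone\{\ind(\mathbb{N}g)\}$. Hence $\theta\in\Cone\{\ind(\mathbb{N}g)\}\cap [g]_{\TF}^{\circ}$, and the same inclusion from Lemma~\ref{988807280630} delivers $\theta\in\Cone^{\circ}\{\ind(\mathbb{N}g)\}$.

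The main obstacle is the segment argument: one must simultaneously exploit the closedness of the cone (Theorem~\ref{076273627938}) to guarantee the boundary-crossing point exists, the convexity of $[g]_{\TF}$ to ensure this point still lies in $[g]_{\TF}^{\circ}$, and the unconditional inclusion of Lemma~\ref{988807280630} to convert the boundary membership into a contradiction. The role of hypothesis (1) is precisely to equate the two ambient subspaces, so that the open cone can be recognized as open inside $\langle [g]_{\TF}\rangle_{\mathbb{R}}$ and thereby as a subset of $[g]_{\TF}^{\circ}$.
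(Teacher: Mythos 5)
Your proof is correct and takes essentially the same route as the paper: both rest on Lemma \ref{988807280630} plus the observation that under (1) the open cone $\Cone^{\circ}\{\ind(\mathbb{N}g)\}$ is open in $\langle [g]_{\TF}\rangle_{\mathbb{R}}$, forcing $g\in[g]_{\TF}^{\circ}$, after which $\partial\Cone\{\ind(\mathbb{N}g)\}\cap[g]_{\TF}^{\circ}=\emptyset$ yields the reverse inclusion. Your explicit segment-crossing argument and the check of $(2)\Rightarrow(1)$ just spell out steps the paper leaves implicit; only note that ``topological interior'' should be read as interior relative to the common span $\langle\ind(\mathbb{N}g)\rangle_{\mathbb{R}}=\langle [g]_{\TF}\rangle_{\mathbb{R}}$ (which hypothesis (1) and $\theta\in[g]_{\TF}$ guarantee the segment never leaves), so that the crossing point indeed lands in the relative boundary $\partial\Cone\{\ind(\mathbb{N}g)\}$.
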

	\begin{proof}
		In the case that $\dim_{\mathbb{R}}\langle [g]_{\TF} \rangle_{\mathbb{R}}= \dim_{\mathbb{R}}\langle\ind(\mathbb{N}g)\rangle_{\mathbb{R}}=1$, the assertion is clear.
		Let $\dim_{\mathbb{R}}\langle [g]_{\TF} \rangle_{\mathbb{R}}= \dim_{\mathbb{R}}\langle\ind(\mathbb{N}g)\rangle_{\mathbb{R}}\ge 2$. Then, $\Cone^{\circ}\{\ind(\mathbb{N}g)\}$ is an open subset of $[g]_{\TF}$. Thus, $g$ belongs to $[g]_{\TF}^{\circ}$. Therefore, by Lemma \ref{988807280630}, we have $\Cone\{\ind(\mathbb{N}g)\}\cap [g]^{\circ}_{\TF}=\Cone^{\circ}\{\ind(\mathbb{N}g)\}$. Hence, $\partial\Cone\{\ind(\mathbb{N}g)\}\cap[g]_{\TF}^{\circ}=\emptyset$.
		Thus, $(1)$ implies $[g]_{\TF}^{\circ}\subseteq\Cone^{\circ}\{\ind(\mathbb{N}g)\}$. Consequently, by Corollary \ref{342694140028}, $(1)$ implies $(2)$. 
	\end{proof}
	
	\subsection{Equivalence of the two Conjectures}
	\begin{proposition}\label{061272712412}
		Let $g$ be a g-vector. If Conjecture \ref{362256186221} holds true, then
		$\TF_{\mathbb{Z}}^{\mathrm{ss}}(g)<\infty$.
	\end{proposition}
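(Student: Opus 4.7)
The plan is a strong induction on $d := \dim_{\mathbb{R}}\langle[g]_{\TF}\rangle_{\mathbb{R}}$. The base case $d=0$ is immediate: $\overline{[g]_{\TF}}=\{0\}$ contains only a single $\TF$-class.

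For the inductive step, I first extract the geometric structure. Assuming Conjecture \ref{362256186221} (applied to every g-vector), together with Theorem \ref{076273627938}, Corollary \ref{355706191811}, and Remark \ref{178502979593}, the closure $\overline{[g]_{\TF}} = \Cone\{\ind(\mathbb{N}g)\}$ is a $d$-dimensional simplicial rational polyhedral cone with only finitely many proper faces $F_1,\dots,F_m$. By Proposition \ref{206559753624}, every $\TF$-class in $\overline{[g]_{\TF}}$ distinct from $[g]_{\TF}$ has strictly smaller spanning dimension and therefore lies in $\partial\overline{[g]_{\TF}} = F_1 \cup \cdots \cup F_m$. A convexity argument via Proposition \ref{987463920193} shows that the $\TF$-class of any $h \in F_j^{\circ}$ stays entirely inside $F_j$, since a segment from $h$ to the relative interior of another facet would necessarily cross $[g]_{\TF}^{\circ}$.

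It therefore suffices to bound the number of $\TF$-classes within each face $F_j$. Applying Conjecture \ref{362256186221} to a point $h \in F_j^{\circ}$, each class closure $\overline{[h]_{\TF}}$ is a rational polyhedral sub-cone of $F_j$. The maximal-dimensional ones (those with $\dim[h]_{\TF} = \dim F_j$) have pairwise disjoint relative interiors which are open in $\langle F_j\rangle_{\mathbb{R}}$. Slicing $F_j$ by an affine hyperplane transverse to its apex produces a bounded convex polytope of finite $(\dim F_j - 1)$-dimensional volume, within which each maximal-dimensional $\TF$-class cuts out a sub-polytope of positive volume with pairwise disjoint interiors; hence only finitely many maximal-dimensional $\TF$-classes coexist in $F_j^\circ$. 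The remaining strictly-lower-dimensional $\TF$-classes of $F_j$ sit either in the relative boundaries of these finitely many maximal classes or in $\partial F_j$, both being finite unions of rational polyhedral cones of dimension strictly less than $\dim F_j$. Picking integer representatives $h_{j,k}$ for each such stratum and invoking the inductive hypothesis gives $|\TF^{\mathrm{ss}}_{\mathbb{Z}}(h_{j,k})| < \infty$, so summing over finitely many contributions yields $|\TF^{\mathrm{ss}}_{\mathbb{Z}}(g)| \leq 1 + \sum_{j,k} |\TF^{\mathrm{ss}}_{\mathbb{Z}}(h_{j,k})| < \infty$.

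The main obstacle is making the cross-section volume argument precise, and in particular verifying that the $\TF$-stratification of each $F_j^{\circ}$ truly behaves like a polyhedral stratification: namely, that every top-dimensional $\TF$-class in $F_j^\circ$ has open relative interior in $\langle F_j\rangle_{\mathbb{R}}$ with rational polyhedral closure. This rests squarely on Conjecture \ref{362256186221}, which identifies each $[h]_{\TF}^{\circ}$ with the open rational polyhedral cone $\Cone^{\circ}\{\ind(\mathbb{N}h)\}$, combined with the dimension-drop Proposition \ref{206559753624} that guarantees lower-dimensional strata feed cleanly into the induction.
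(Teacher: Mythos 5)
There is a genuine gap at the heart of your facet-by-facet count. From ``the maximal-dimensional $\TF$-classes cut out sub-polytopes of positive volume with pairwise disjoint interiors inside a cross-section of finite volume'' you conclude that there are only finitely many of them; this inference is false without a uniform lower bound on those volumes, which nothing in your argument provides. Infinitely many pairwise disjoint full-dimensional rational polyhedral cones can perfectly well accumulate inside a bounded cross-section --- this is exactly what happens with the chambers of a $\tau$-tilting infinite algebra, the very phenomenon this paper is concerned with --- so the measure argument yields no contradiction. The subsequent steps inherit the problem: the treatment of the lower-dimensional classes presupposes that the top-dimensional strata of each facet are finite in number and that the remaining classes organize into finitely many strata with integer representatives to which the inductive hypothesis can be applied, which is essentially the finiteness statement you are trying to prove. (Your opening reductions are fine: assuming Conjecture \ref{362256186221}, $\overline{[g]_{\TF}}$ is the rational simplicial cone $\Cone\{\ind(\mathbb{N}g)\}$, and Proposition \ref{206559753624} pushes all other classes into its boundary.)

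The missing idea, and the paper's much shorter route, is that no counting inside a facet is needed at all: by Theorem \ref{076273627938} one has $\Cone\{\ind(\mathbb{N}g)\}=\Cone\{\ind(tg)\}$ for some $t\in\mathbb{N}$, by Remark \ref{050599086025} every boundary point lies in one of the finitely many relatively open faces $\Cone^{\circ}\{H\}$ with $H\subseteq\ind(tg)$, and by Lemma \ref{206430023172} every integer (indeed rational) point of a fixed relatively open face has, after scaling, generic decomposition with summands exactly $H$, hence all such points are $\TF$-equivalent. So each of the finitely many faces of $\Cone\{\ind(tg)\}$ meets the integer lattice in at most one $\TF$-class, giving $|\TF^{\mathrm{ss}}_{\mathbb{Z}}(g)|\le 2^{|\ind(tg)|}<\infty$ directly, with no induction on dimension and no volume estimate.
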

	\begin{proof}
		It follows from Theorem \ref{076273627938} that $[g]_{\TF}^{\circ}=\Cone^{\circ}\{\ind(tg)\}$, for some $t\in\mathbb{N}$. Therefore, the assertion follows from Remark \ref{050599086025} and Lemma \ref{206430023172}.
	\end{proof}
	\begin{proposition}\label{117791826723}
		Consider a g-vector $g$ with $|\TF^{\mathrm{ss}}_{\mathbb{Z}}(g)|= 2$. Then, $\dim_{\mathbb{R}}\langle\ind(\mathbb{N}g)\rangle_{\mathbb{R}}=1$. Moreover, if Conjecture \ref{404319523362}(2) holds, then $\dim\langle[g]_{\TF}\rangle_{\mathbb{R}}=1$.
	\end{proposition}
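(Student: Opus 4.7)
The strategy is to exploit the following observation: the hypothesis $|\TF^{\mathrm{ss}}_{\mathbb{Z}}(g)|=2$ forces every nonzero integer g-vector in $\overline{[g]_{\TF}}$ to lie in $[g]_{\TF}$ itself, since the two available semistable torsion pairs are the one attached to $0$ (which, by the remark after Definition \ref{142707043868}, corresponds exactly to $h=0$) and the one attached to $g$. I will combine this with the geometry of the cone $\Cone\{\ind(\mathbb{N}g)\}$ to rule out $\dim_{\mathbb{R}}\langle\ind(\mathbb{N}g)\rangle_{\mathbb{R}}\geq 2$.

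Suppose for contradiction that $\dim_{\mathbb{R}}\langle\ind(\mathbb{N}g)\rangle_{\mathbb{R}}\geq 2$. By Theorem \ref{076273627938} I can choose $t\in\mathbb{N}$ with $\Cone\{\ind(tg)\}=\Cone\{\ind(\mathbb{N}g)\}$ and replace $g$ by $tg$, which preserves $[g]_{\TF}$, the hypothesis $|\TF^{\mathrm{ss}}_{\mathbb{Z}}(g)|=2$, the set $\tame(\ind(\mathbb{N}g))$, and the dimension and simplicial/rational nature of the cone (Corollary \ref{355706191811}). Then $\Cone\{\ind(g)\}$ is a simplicial rational polyhedral cone of dimension at least $2$, so I can pick linearly independent extreme-ray generators $h_1,h_2\in\ind(g)$. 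By Lemma \ref{206430023172}, $\ind(g)\subseteq\overline{[g]_{\TF}}$, and since $h_1,h_2$ are nonzero the key observation gives $h_1,h_2\in[g]_{\TF}$.

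Now I split into two cases. If $\tame(\ind(\mathbb{N}g))$ contains some $f_0$, then $f_0\in\overline{[g]_{\TF}}\setminus\{0\}$, so $f_0\in[g]_{\TF}$; since $f_0$ is tame and generically indecomposable, $\ind(f_0)=\{f_0\}$ and Theorem \ref{376103330122} gives $[g]_{\TF}=[f_0]_{\TF}=\Cone^{\circ}\{f_0\}=\mathbb{R}^{>0}f_0$, a $1$-dimensional ray, contradicting the linear independence of $h_1,h_2\in[g]_{\TF}$. If instead $\tame(\ind(\mathbb{N}g))=\emptyset$, then $g$ is not generically indecomposable (the generic decomposition has $\geq 2$ distinct summands $h_1,h_2$), so the contrapositive of Lemma \ref{781520404735} yields $\partial\Cone\{\ind(g)\}\cap[g]_{\TF}=\emptyset$. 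However, $h_1$ generates an extreme ray of the simplicial cone $\Cone\{\ind(g)\}$ of dimension $\geq 2$, so $h_1\in\partial\Cone\{\ind(g)\}\cap[g]_{\TF}$, a contradiction. Hence $\dim_{\mathbb{R}}\langle\ind(\mathbb{N}g)\rangle_{\mathbb{R}}=1$, which proves the first assertion.

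For the second assertion, the first part gives $\dim_{\mathbb{R}}\langle\ind(\mathbb{N}g)\rangle_{\mathbb{R}}=1$, and invoking Conjecture \ref{404319523362}(2) then yields $[g]_{\TF}=\mathbb{R}^{>0}g$, which has real dimension $1$. The main obstacle is the second case of the dichotomy: one must check carefully that the replacement $g\leadsto tg$ preserves $\tame(\ind(\mathbb{N}g))$ and the simplicial/rational cone, and that the chosen $h_1$ is genuinely on an extreme ray of $\Cone\{\ind(g)\}$ (hence in its relative boundary), so that Lemma \ref{781520404735} actually applies to exclude $h_1$ from $[g]_{\TF}$.
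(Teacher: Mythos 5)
Your proposal is correct, but it reaches the contradiction by a different route than the paper. Both arguments start identically: assuming $\dim_{\mathbb{R}}\langle\ind(\mathbb{N}g)\rangle_{\mathbb{R}}\ge 2$, one finds two linearly independent members of $\ind(tg)$, and the hypothesis $|\TF^{\mathrm{ss}}_{\mathbb{Z}}(g)|=2$ (via Lemma \ref{532756467772}, Lemma \ref{206430023172} and the non-degeneracy behind Remark item $(2)$, i.e.\ a nonzero g-vector cannot give the trivial pair $(\mod\Lambda,\mod\Lambda)$) forces them into $[g]_{\TF}$. From there the paper stays elementary: writing $tg=g_1\oplus g_2$, it uses the $D$-set criterion of Lemma \ref{076626606506} to deduce that multiples of $g_1$ and $g_2$, hence of $g$, are tame, and then Theorem \ref{376103330122} identifies $[g]_{\TF}$ with an open cone $\Cone^{\circ}\{\ind(tt_1t_2g)\}$ that cannot contain its generators $g_1,g_2$. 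You instead invoke the heavier structural results (Theorem \ref{076273627938} and Corollary \ref{355706191811}) to replace $g$ by $tg$ with $\Cone\{\ind(g)\}$ equal to the full simplicial cone, take extreme-ray generators $h_1,h_2\in\ind(g)\cap[g]_{\TF}$, and split on $\tame(\ind(\mathbb{N}g))$: in the tame case Theorem \ref{376103330122} makes $[g]_{\TF}$ a ray, contradicting linear independence, and in the wild case the contrapositive of Lemma \ref{781520404735} excludes the relative boundary from $[g]_{\TF}$ while $h_1$ lies on an extreme ray of a cone of dimension $\ge 2$, hence in that boundary. This is sound: the replacement preserves $[g]_{\TF}$ and $\TF^{\mathrm{ss}}_{\mathbb{Z}}(g)$, extreme rays of a pointed (simplicial) polyhedral cone are spanned by elements of the generating set, and although the replacement only guarantees $\ind(\mathbb{N}tg)\subseteq\ind(\mathbb{N}g)$ rather than equality of tame sets, the inclusion is all your two cases need (emptiness passes to $tg$; a tame $f_0\in\ind(\mathbb{N}g)$ lies in $\overline{[g]_{\TF}}$ regardless). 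The trade-off: the paper's proof is shorter and self-contained, while yours makes the geometric picture (extreme rays versus boundary) explicit and avoids the $D$-set manipulation at the cost of citing the rationality/simpliciality machinery. Your treatment of the second assertion via Conjecture \ref{404319523362}(2) matches the paper's (implicit) argument.
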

	\begin{proof}
		Assume that $\dim_{\mathbb{R}}\langle\ind(\mathbb{N}g)\rangle_{\mathbb{R}}\ge2$. Thus, there are linearly independent g-vectors $g_1$ and $g_2$ such that $tg=g_1\oplus g_2$, for some $t\in\mathbb{N}$. Consequently, by the assumption, $g_1$ and $g_2$ must be contained in $[g]_{\TF}$. Since $g_2\in D_{g_1}$, we also have $g\in D_{g_1}$. Therefore, $t_1g_1$ is tame, for some $t_1\in\mathbb{N}$. Similarly, $t_2g_2$ is tame, for some $t_2\in\mathbb{N}$. Hence, $t_1t_2g$ is tame. Therefore, by Theorem \ref{376103330122}, $[g]_{\TF}=\Cone^{\circ}\{\ind(tt_1t_2g)\}$. This implies that neither $g_1$ nor $g_2$ lies in $[g]_{\TF}$, which contradicts the assumption that $|\TF^{\mathrm{ss}}_{\mathbb{Z}}(g)|= 2$.
	\end{proof}
	
	\begin{theorem}\label{948850386317}
		Conjecture \ref{362256186221} and Conjecture \ref{404319523362} are equivalent.
	\end{theorem}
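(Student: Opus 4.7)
The plan is to prove the two implications of the equivalence separately. The forward direction largely compiles results already established in the paper, whereas the reverse direction requires a more delicate induction with several interlocking pieces.

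For the forward direction, assume Conjecture \ref{362256186221} holds for every g-vector $g$. Condition (1) follows because $\Cone^{\circ}\{\ind(\mathbb{N}g)\}$ is generated by integer g-vectors, so rational points are dense in it; combined with Remark \ref{178502979593}, which ensures that the relative interior $[g]_{\TF}^{\circ}$ is a nonempty open subset of the convex set $[g]_{\TF}$, density transfers to all of $[g]_{\TF}$. Condition (2): if $\dim_{\mathbb{R}}\langle\ind(\mathbb{N}g)\rangle_{\mathbb{R}}=1$, then the conjecture identifies $[g]_{\TF}^{\circ}$ with a one-dimensional open ray, forcing the convex cone $[g]_{\TF}$ passing through $g$ to equal $\mathbb{R}^{>0}g$; the converse is immediate from the dimension count. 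Condition (3) is exactly Proposition \ref{061272712412}.

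For the reverse direction, assume Conjecture \ref{404319523362}. By Theorem \ref{962650431975}, it suffices to establish the dimension equality $\dim_{\mathbb{R}}\langle[g]_{\TF}\rangle_{\mathbb{R}}=\dim_{\mathbb{R}}\langle\ind(\mathbb{N}g)\rangle_{\mathbb{R}}$ for every g-vector $g$. The plan is to induct on $|\TF^{\mathrm{ss}}_{\mathbb{Z}}(g)|$, which is finite by condition (3). The base case $|\TF^{\mathrm{ss}}_{\mathbb{Z}}(g)|\leq 2$ is handled by Proposition \ref{117791826723} together with condition (2). For the inductive step, each g-vector $\eta\in\overline{[g]_{\TF}}\setminus[g]_{\TF}$ has $\TF^{\mathrm{ss}}_{\mathbb{Z}}(\eta)\subsetneq\TF^{\mathrm{ss}}_{\mathbb{Z}}(g)$ by the remark following Definition \ref{142707043868}, so the inductive hypothesis delivers Conjecture \ref{362256186221} for each such $\eta$.

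Using condition (1), I pick g-vectors $h^1,\ldots,h^s\in[g]_{\TF}$ whose linear span equals $\langle[g]_{\TF}\rangle_{\mathbb{R}}$, then build iteratively $g^{(0)}:=g$ and $g^{(k+1)}:=g^{(k)}\oplus t_{k+1}h^{k+1}$ for suitable $t_{k+1}\in\mathbb{N}$. Each step is legitimate because $h^{k+1}\in\overline{[g^{(k)}]_{\TF}}\cap K_0(\proj\Lambda)\subseteq D_{g^{(k)}}$ by Lemma \ref{076626606506}, and the resulting $g^{(k+1)}$ remains in $[g]_{\TF}$ by convexity. At each step where $h^{k+1}$ lies outside $\langle\ind(\mathbb{N}g^{(k)})\rangle_{\mathbb{R}}$, the inclusion $\ind(\mathbb{N}g^{(k+1)})\supseteq\ind(\mathbb{N}g^{(k)})\cup\ind(\mathbb{N}h^{k+1})$ strictly enlarges the span, so after at most $s$ iterations I obtain $g'\in[g]_{\TF}$ with $\dim_{\mathbb{R}}\langle\ind(\mathbb{N}g')\rangle_{\mathbb{R}}=\dim_{\mathbb{R}}\langle[g]_{\TF}\rangle_{\mathbb{R}}$. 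Applying Theorem \ref{962650431975} to $g'$ yields $[g]_{\TF}^{\circ}=[g']_{\TF}^{\circ}=\Cone^{\circ}\{\ind(\mathbb{N}g')\}$. Since $\ind(\mathbb{N}g)\subseteq\ind(\mathbb{N}g')$, we have $g\in\Cone^{\circ}\{\ind(g)\}\subseteq\Cone^{\circ}\{\ind(\mathbb{N}g')\}=[g]_{\TF}^{\circ}$, hence $g$ lies in the intersection of the open cones $\Cone^{\circ}\{\ind(\mathbb{N}g)\}$ and $\Cone^{\circ}\{\ind(\mathbb{N}g')\}$, and Lemma \ref{072929369060} forces them to coincide.

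The main obstacle is the delicate management of the iterative direct-sum construction: in particular, one must verify that each $g^{(k)}$ genuinely absorbs $\ind(\mathbb{N}h^{k+1})$ into its generic decomposition—a consequence of $h^{k+1}\in D_{g^{(k)}}$ combined with the generic decomposition theory (Description \ref{108559221925})—and that $g$ lands in the interior $[g]_{\TF}^{\circ}$ for the transfer step, which I extract from $g\in\Cone^{\circ}\{\ind(\mathbb{N}g')\}$ as noted above. The three conditions play distinct roles: condition (3) makes the induction on $|\TF^{\mathrm{ss}}_{\mathbb{Z}}|$ well-founded; condition (2) anchors the base case through Proposition \ref{117791826723}; and condition (1) supplies the spanning g-vectors that drive the iterative construction.
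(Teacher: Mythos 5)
Your forward direction is fine and matches the paper. The reverse direction, however, has a genuine gap at its central step. You justify the iterative construction $g^{(k+1)}:=g^{(k)}\oplus t_{k+1}h^{k+1}$ by the claimed inclusion $h^{k+1}\in\overline{[g^{(k)}]_{\TF}}\cap K_0(\proj\Lambda)\subseteq D_{g^{(k)}}$ ``by Lemma \ref{076626606506}''. That is not what the lemma gives: its final clause $\overline{[h]_{\TF}}\cap K_0(\proj\Lambda)\subseteq D_g$ is conditional on already knowing $h\in D_g$. To get your inclusion you would have to take $h=g^{(k)}$, i.e.\ you need $g^{(k)}\in D_{g^{(k)}}$, which holds precisely when some multiple of $g^{(k)}$ is tame (compare the use of $h'\in D_{h'}$ in Lemma \ref{781520404735}). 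In the wild case --- the only case where the conjecture is nontrivial --- this fails, and indeed $\TF$-equivalence does \emph{not} imply compatibility of generic decompositions: Theorem \ref{310913388628} needs tameness exactly for this reason, and if your inclusion held unconditionally, the conjecture would essentially follow without the auxiliary conditions at all. Symptomatically, your induction hypothesis on boundary vectors is set up but never actually used: your $h^i$ are taken in $[g]_{\TF}$ itself, not in $\partial[g]_{\TF}\setminus[g]_{\TF}$, so the whole inductive apparatus is idle while the real work is hidden in the unjustified $D$-membership.

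There is a second, independent problem at the end: the inclusion $\Cone^{\circ}\{\ind(g)\}\subseteq\Cone^{\circ}\{\ind(\mathbb{N}g')\}$ does not follow from $\ind(\mathbb{N}g)\subseteq\ind(\mathbb{N}g')$; a strictly positive combination of a sub-collection of generators lies in a face, not in the relative interior, whenever $\ind(\mathbb{N}g)$ spans a proper subspace of $\langle\ind(\mathbb{N}g')\rangle_{\mathbb{R}}$ --- and ruling that out is equivalent to the dimension equality you are trying to prove, so the transfer from $g'$ back to $g$ begs the question of whether $g\in[g]_{\TF}^{\circ}$. The paper's proof avoids both traps: it inducts on $\dim_{\mathbb{R}}\langle[g]_{\TF}\rangle_{\mathbb{R}}$ (strict drop from Proposition \ref{206559753624}), picks a reduced $g'\in[g]_{\TF}^{\circ}$ and uses Lemma \ref{041608591070} to place $\ind(g')$ in $\partial[g]_{\TF}\setminus[g]_{\TF}$, applies the induction hypothesis to those lower-dimensional boundary classes $[h^i]_{\TF}$, and obtains the needed $D$-compatibilities $h^i\in D_{\mathbb{N}h^j}$ legitimately from Lemma \ref{076626606506}, because the relevant vectors occur together as summands in a single generic decomposition before the lemma's conditional clause is invoked. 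You would need to rebuild your inductive step along these lines; as written, the construction of $g'$ cannot be carried out.
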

	\begin{proof}
		It is straightforward to check that if $[g]_{\TF}^{\circ}=\Cone^{\circ}\{\ind(\mathbb{N}g)\}$, then rational points are dense in $[g]_{\TF}$. Moreover, by Proposition \ref{061272712412}, we can see that Conjecture \ref{362256186221} implies Conjecture \ref{404319523362}(3). Additionally, Conjecture \ref{404319523362}(2) follows obviously from Conjecture \ref{362256186221}. Therefore, if Conjecture \ref{362256186221} holds, then so does Conjecture \ref{404319523362}.
		
		Conversely,
		assume that Conjecture \ref{404319523362} holds true.
		First, consider the case where $\dim_{\mathbb{R}}\langle [g]_{\TF} \rangle_{\mathbb{R}}=1$. In this case, the assertion follows directly from Theorem \ref{962650431975}.
		Next, using Proposition \ref{206559753624}, we prove the remaining cases by induction on $\dim_{\mathbb{R}}\langle [g]_{\TF} \rangle_{\mathbb{R}}$.
		
		Assume that $\dim_{\mathbb{R}}\langle [g]_{\TF} \rangle_{\mathbb{R}}\ge 2$.
		Then, according to Proposition \ref{117791826723} and Conjecture \ref{404319523362}(3), there are finitely many g-vectors $h^1,\ldots ,h^{s\ge 2}\in\partial [g]_{\TF}\setminus [g]_{\TF}$ such that $[h^i]_{\TF}\neq[h^j]_{\TF}$, for all $0\le i\neq j\le s$, and
		\[\overline{[g]_{\TF}}\cap K_0(\proj\Lambda)_{\mathbb{Q}}\subseteq [g=h^{0}]_{\TF}\cup(\bigcup_{i=1}^{s}[h^i]^{\circ}_{\TF}).\]
		
		From Proposition \ref{206559753624}, we know that $\dim_{\mathbb{R}}\langle [h^i]_{\TF} \rangle_{\mathbb{R}}\lneqq\dim_{\mathbb{R}}\langle [g]_{\TF} \rangle_{\mathbb{R}}$, for each $1\le i\le s$. Thus, by induction hypothesis, $[h^i]_{\TF}^{\circ}=\Cone^{\circ}\{\ind(\mathbb{N}h^i)\}$. Next, take a reduced g-vector $g'\in [g]^{\circ}_{\TF}$. Then  by Conjecture \ref{404319523362}(2) and Corollary \ref{276955124928}, we can assume that
		\[\dim_{\mathbb{R}}\langle\ind(g')\rangle_{\mathbb{R}}= \dim_{\mathbb{R}} \langle\ind(\mathbb{N}g')\rangle_{\mathbb{R}} \ge2.\]
		Note that by Lemma \ref{833777223956}, $\ind(g')\subseteq\partial [g]_{\TF}\setminus[g]_{\TF}$. Therefore, the set
		\[I':=\{0\le i \le s\mid\ind(g')\cap [h^i]^{\circ}_{\TF}\neq\emptyset\}\]
		does not contain zero.
		Hence, by Lemma \ref{072929369060}, there exists $a\in\mathbb{N}$ such that
		\[
		\ind(\mathbb{N}ag')\subseteq\bigcup\limits_{i\in I'}\ind(\mathbb{N}h^i).
		\]
		Therefore, $g'\in\Cone\{\bigcup_{i\in I'}\ind(\mathbb{N}h^i)\}$.
		On the other hand, by Lemma \ref{076626606506}, for all $i\neq j\in I'$, $h^i\in D_{\mathbb{N}h^j}$. Thus, there exists $t\in\mathbb{N}$ such that
		\[h^{I'}:=t\sum_{i\in I'}h^i=\bigoplus_{i\in I'}th^i.\]
		It is straightforward to check that $g'\in\Cone\{\ind(\mathbb{N}h^{I'})\}$.
		Hence, for each reduced g-vector $g'\in[g]_{\TF}^{\circ}$, there is a subset $I'$ of $\{1,\ldots,s\}$ such that $g'\in\Cone\{\ind(\mathbb{N}h^{I'})\}$. Since $\{1,\ldots,s\}$ has only finitely many subsets, the set $[g]_{\TF}^{\circ}\cap K_0(\proj\Lambda)_{\mathbb{Q}}$ can be covered by finitely many cones of dimension lower than or equal to $\dim_{\mathbb{R}}\langle [g]_{\TF} \rangle_{\mathbb{R}}$. Thus, Conjecture \ref{404319523362}(1) implies there exists a subset $I'\subseteq\{1,\ldots,s\}$ such that
		\[\dim_{\mathbb{R}}\langle \ind(\mathbb{N}h^{I'}) \rangle_{\mathbb{R}}=\dim_{\mathbb{R}}\langle [g]_{\TF} \rangle_{\mathbb{R}}.\]
		Therefore, $\Cone^{\circ}\{\ind(\mathbb{N}h^{I'})\}\cap [g]_{\TF}^{\circ}\neq\emptyset$. Since $\Cone\{\ind(\mathbb{N}h^{I'})\}\subseteq \overline{[g]_{\TF}}$, we conclude that $\Cone^{\circ}\{\ind(\mathbb{N}h^{I'})\}\subseteq [g]_{\TF}^{\circ}$.
		Thus, by Theorem \ref{962650431975},
		\[[g]_{\TF}^{\circ}=\Cone^{\circ}\{\ind(\mathbb{N}h^{I'})\}.\]
	\end{proof}
	\begin{corollary}\label{401237755092}
		Let $g$ be a g-vector. If $\dim_{\mathbb{R}}W_g=|\Lambda|-\dim_{\mathbb{R}} \langle\ind(\mathbb{N}g)\rangle_{\mathbb{R}}$, then $[g]_{\TF}^{\circ}=\Cone^{\circ}\{\ind(\mathbb{N}g)\}$. The converse also holds, if moreover, $W_{g}=\Ker\langle [g]_{\TF},-\rangle$.
		In particular, if $\dim_{\mathbb{R}}\langle\ind(\mathbb{N}g)\rangle_{\mathbb{R}}=|\Lambda|-1$, then \[[g]_{\TF}^{\circ}=\Cone^{\circ}\{\ind(\mathbb{N}g)\}.\]
	\end{corollary}
	\begin{proof}
		It is easy to see that $W_g\subseteq\ker\langle h,-\rangle$, for all $h\in[g]_{\TF}$. Thus,
		\begin{equation}\label{401237755090}
			\dim_{\mathbb{R}}W_g\le|\Lambda|-\dim_{\mathbb{R}}\langle[g]_{\TF}\rangle_{\mathbb{R}}\le |\Lambda|-\dim_{\mathbb{R}} \langle\ind(\mathbb{N}g)\rangle_{\mathbb{R}}.
		\end{equation}
		Therefore, the first and second assertions follows from Theorem \ref{962650431975}.
		
		By Proposition \ref{129173072599}, $W_g\neq\{0\}$. Thus, by \eqref{401237755090},
		\[\dim_{\mathbb{R}}W_{g}=\dim_{\mathbb{R}}\Ker \langle\ind(\mathbb{N}g) ,-\rangle=1.\]
		Therefore, the last assertion is a consequence of the first one.
	\end{proof}
	
	\section*{Acknowledgment}
	The authors thank Sota Asai for pointing out a gap in the proof of Theorem \ref{076273627938} in an earlier draft of this manuscript. The first author received an INNS Research Fellowship, and this work was partially funded by the Iran National Science Foundation (INSF), Project No. 4003197.
	
	{\small\bibliographystyle{alpha}
		\bibliography{all}}
\end{document}